\newcommand{\bfi}{\bfseries\itshape}
\newcommand{\imsizebig}{1.0\columnwidth}
\numberwithin{equation}{section}
\def\eref#1{(\ref{#1})}
\def\N{\mathbb{N}}
\def\P{\mathbb{P}}
\def\R{\mathbb{R}}
\def\E{\mathbb{E}}
\def\<{\big\langle}
\def\>{\big\rangle}
\def\Hess{\operatorname{Hess}}
\newtheorem{Lemma}{Lemma}[section]
\newtheorem{Theorem}{Theorem}[section]
\newtheorem{Proposition}{Proposition}[section]
\newtheorem{Condition}{Condition}[section]
\theoremstyle{remark}
\newtheorem{Remark}{Remark}[section]
\theoremstyle{definition}
\newtheorem{Definition}{Definition}[section]
\theoremstyle{definition}
\begin{document}
\title{Nonintrusive and structure preserving  multiscale integration of  stiff ODEs, SDEs and Hamiltonian systems  with hidden slow dynamics via flow averaging}

\date{\today}

\author{Molei Tao$^1$, Houman Owhadi$^{1,2}$, and Jerrold E. Marsden$^1$}

\footnotetext[1]{California Institute of
Technology, Applied \& Computational Mathematics, Control \&
Dynamical systems, MC 217-50 Pasadena , CA 91125 }
\footnotetext[2]{owhadi@caltech.edu}

\maketitle
\begin{abstract}
We introduce a new class of  integrators for stiff ODEs as well as SDEs. Examples of subclasses of systems that we treat are ODEs and SDEs that are sums of two terms, one of which has large coefficients.
These integrators  are (i) {\it Multiscale}: they  are based on flow averaging and so do not fully resolve the fast variables and have a computational cost determined by slow variables (ii) {\it Versatile}: the method is based on averaging the flows of the given dynamical system (which may have hidden slow and fast processes) instead of averaging the instantaneous drift of assumed separated slow and fast processes. This bypasses the need for identifying explicitly (or numerically) the slow or fast variables (iii) {\it Nonintrusive}: A pre-existing numerical scheme resolving the microscopic time scale can be used as a black box and easily turned into one of the integrators in this paper by turning the large coefficients on over a microscopic timescale  and off during a mesoscopic timescale (iv) {\it Convergent over two scales}: strongly over slow processes and in the sense of measures over fast ones. We introduce the related notion of two-scale flow convergence and analyze the convergence of these integrators under the induced topology (v) {\it Structure preserving}: They inherit the structure preserving properties of the legacy integrators from which they are derived. Therefore, for stiff Hamiltonian systems (possibly on manifolds), they can be made to be symplectic, time-reversible, and symmetry preserving (symmetries are group actions that leave the system invariant) in all variables. They are explicit and applicable to arbitrary stiff potentials (that need not be quadratic). Their application to the Fermi-Pasta-Ulam problems shows accuracy and stability over four orders of magnitude of time scales. For stiff Langevin equations, they are symmetry preserving, time-reversible and Boltzmann-Gibbs reversible, quasi-symplectic on all variables and conformally symplectic with isotropic friction.
\end{abstract}

\tableofcontents

\paragraph{Acknowledgements} Part of this work has been supported by NSF grant CMMI-092600.
We are grateful to  C. Lebris,  J.M. Sanz-Serna, E. S. Titi, R. Tsai and E. Vanden-Eijnden  for  useful comments and providing references. We would also like to thank two anonymous referees for precise and detailed  comments and suggestions.

\section{Overview of the integrator on ODEs}
Consider the following  ODE on $\R^d$,
\begin{equation}\label{fullsystem}
\dot{u}^\epsilon=G(u^\epsilon)+\frac{1}{\epsilon}F(u^\epsilon).
\end{equation}
In Subsections \ref{natfla1}, \ref{subham}, \ref{subsde}, \ref{natfla2} and \ref{sublan} we will consider more general ODEs, stiff deterministic Hamiltonian systems \eref{ksdjjsdshkdjjksdhj}, SDEs (\eref{fullsystemS} and \eref{jkhgsdejgdshhh}) and  Langevin equations (\eref{jdhsjhgdjwghe} and \eref{jsdkjdshdgjdhd}); however for the sake of clarity, we will start the description of our method with \eref{fullsystem}.

\begin{Condition}\label{lsdsddsdeehxA1} Assume that there exists a diffeomorphism $\eta:=(\eta^x,\eta^y)$, from $\R^d$ onto $\R^{d-p}\times \R^p$ (with uniformly bounded $C^1, C^2$ derivatives), separating slow and fast variables, i.e., such that (for all $\epsilon>0$) the process $(x^\epsilon_t,y^\epsilon_t)=(\eta^x(u^\epsilon_t),\eta^y(u^\epsilon_t))$ satisfies an ODE system of the form
\begin{equation}\label{kfgfgdiuuiusedejhd}
\begin{cases}
\dot{x}^\epsilon=g(x^\epsilon,y^\epsilon) & x^\epsilon_0=x_0\\
\dot{y}^\epsilon=\frac{1}{\epsilon}f(x^\epsilon,y^\epsilon) & y^\epsilon_0=y_0
\end{cases} .
\end{equation}
\end{Condition}
\begin{Condition}\label{lsdsddsdeehxA2} Assume that the fast variables in \eref{kfgfgdiuuiusedejhd} are locally ergodic with respect to a family of  measures $\mu$ drifted by slow variables. More precisely, we assume that there exists a family of probability measures $\mu(x,dy)$ on $\R^p$ indexed by $x\in\R^{d-p}$ and a positive function $T \mapsto E(T)$ such that $\lim_{T\rightarrow \infty}E(T)=0$
  and such that for all $x_0,y_0,T$ and $\phi$ uniformly bounded and Lipschitz,  the solution to
\begin{equation}
\dot{Y}_t=f(x_0,Y_t)\quad \quad Y_0=y_0
\end{equation}
satisfies
    \begin{equation}
    \Big|\frac{1}{T}\int_0^T \phi(Y_s)ds-\int_{\R^p} \phi(y)\mu(x_0,dy)\Big|\leq \chi\big(\|(x_0,y_0)\|\big) E(T) (\|\phi\|_{L^\infty}+\|\nabla \phi\|_{L^\infty} )
    \end{equation}
where $r \mapsto \chi(r)$ is bounded on compact sets.
\end{Condition}

Under conditions \ref{lsdsddsdeehxA1} and \ref{lsdsddsdeehxA2}, it is known (we refer for instance to \cite{MR810620} or to Theorem 14, Section 3 of Chapter II of \cite{MR1020057} or to \cite{MR2382139}) that $x^\epsilon$ converges towards $x_t$ defined as the solution to the ODE
\begin{equation}\label{kfgfgdserreredejhd}
\dot{x}=\int g(x,y)\mu(x,dy),\quad x | _{t=0} = x_0
\end{equation}
where $\mu(x,dy)$ is the ergodic measure associated with the solution to the ODE
\begin{equation}\label{kfgfsddfsedejhd}
\dot{y}=f(x,y)
\end{equation}
It follows that the slow behavior of solutions of \eref{fullsystem} can be simulated over coarse time steps by first
 identifying the slow process $x^\epsilon$ and then using numerical approximations of solutions of \eref{kfgfgdiuuiusedejhd} to approximate $x^\epsilon$. Two classes of integrators  have been founded on this observation: The equation free method \cite{MR2041455, KevGio09} and the Heterogeneous Multiscale Method \cite{MR2314852, MR2164093, MR2069938, Ariel:08}. One shared characteristic of the original form of those integrators is, after identification of the slow variables, to use a micro-solver to approximate the effective drift in \eref{kfgfgdserreredejhd} by  averaging the instantaneous drift $g$ with respect to numerical solutions of \eref{kfgfsddfsedejhd} over a time span larger than the mixing time of the solution to  \eref{kfgfsddfsedejhd}.

\subsection{FLAVORS}
In this paper, we propose a new method based on the averaging of the instantaneous flow of the ODE \eref{fullsystem} with hidden slow and fast variables instead of the instantaneous drift of $x^\epsilon$ in ODE \eref{kfgfgdiuuiusedejhd} with separated slow and fast variables.  We have called the resulting class of numerical integrators {\bfi FLow AVeraging integratORS (FLAVORS)}.
Since FLAVORS are directly applied to \eref{fullsystem}, hidden slow variables do not need to be identified, either explicitly  or numerically. Furthermore FLAVORS can be implemented using an arbitrary legacy integrator $\Phi^\frac{1}{\epsilon}_h$ for  \eref{fullsystem} in which the parameter  $\frac{1}{\epsilon}$ can be controlled (figure \ref{FLavorpic}).
 \begin{figure} [h]
\begin{tabular}{c}
{\resizebox{\imsizebig}{!}{\includegraphics{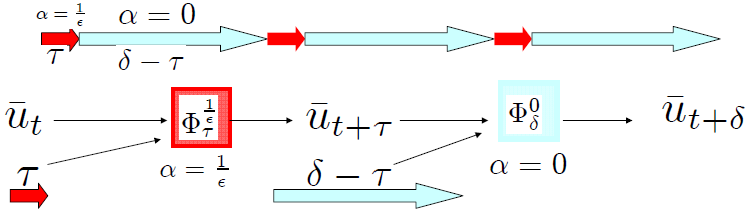}}}
\end{tabular}
\caption{\footnotesize A pre-existing numerical scheme resolving the microscopic time scale can be used as a black box and turned into a FLAVOR by simply turning on and off  stiff parameters over a microscopic timescale $\tau$  (on) and a mesoscopic timescale $\delta$ (off).
The bottom
line of the approach is to (repeatedly) compose an
accurate, short-time integration of the complete set of equations with an accurate, intermediate-time integration of the non-stiff part of the system. 
While the integration over short time intervals is accurate (in a strong sense), this is extended to intermediate time integration (in the sense of measures) using the interplay between the short time integration and the mesoscopic integration.
The computational cost remains bounded independently from the stiff parameter $1/\epsilon$ because: (i) The whole system is only integrated over extremely short ($\tau \ll \epsilon$) time intervals during every intermediate ($\delta$) time intervals. (ii) The intermediate time step $\delta$  (that of the non-stiff part of the system) is  limited not by the fast time scales ($\epsilon$) but by the slow ones ($\mathcal{O}(1)$).
}
\label{FLavorpic}
\end{figure}
More precisely, assume that there exists a constant $h_0>0$ such that $\Phi^\alpha_h$ satisfies for all $h\leq h_0 \min(\frac{1}{\alpha},1) $ and $u\in \R^d$
\begin{equation}\label{lsfassddlfsssekjlkd}
\big|\Phi_h^{\alpha}(u)-u-h G(u)-\alpha h F(u) \big|\leq C h^2 (1+\alpha)^2
\end{equation}
then   FLAVOR can be defined as the algorithm simulating the process
\begin{equation}\label{ksjhjhhjdskdjjhwuel}
\bar{u}_{t}=\big(\Phi^0_{\delta-\tau}\circ \Phi^{\frac{1}{\epsilon}}_{\tau}\big)^k(u_0) \quad \text{for}\quad k\delta \leq t <(k+1)\delta
\end{equation}
where $\tau$ is a fine time step resolving the fast time scale ($\tau \ll\epsilon$) and $\delta$ is a mesoscopic time step independent of the fast time scale satisfying $\tau \ll\epsilon \ll \delta \ll 1$ and
\begin{equation}\label{eqlimits}
(\frac{\tau}{\epsilon})^2\ll\delta \ll \frac{\tau}{\epsilon}
\end{equation}
In our numerical experiments, we have used the ``rule of thumb'' $\delta\sim \gamma \frac{\tau}{\epsilon}$ where $\gamma$ is a small parameter ($0.1$ for instance).

By switching stiff parameters FLAVOR approximates the flow of \eref{fullsystem} over a coarse time step $h$ (resolving the slow time scale) by the flow
\begin{equation}
\Phi_h:=\big(\Phi^0_{\frac{h}{M}-\tau}\circ \Phi^{\frac{1}{\epsilon}}_{\tau}\big)^{M}
\end{equation}
where  $M$ is a positive integer corresponding to the number of ``samples'' used to average the flow ($\delta$ has to be identified with $\frac{h}{M}$). We refer to subsection \ref{rationale} for the distinction between macro and meso-steps, for the
 rationale and mechanism behind FLAVORS and the limits \eref{eqlimits}.

 Since FLAVORS are obtained by flow-composition, we will show in Section \ref{jhhsghdjshdjg} and \ref{kshskjhdklshdkjhjh} that they inherit the structure preserving properties (for instance symplecticity and symmetries under a group action) of the legacy integrator for Hamlitonian systems and Langevin equations.

Under conditions \eref{eqlimits} on $\tau$ and $\delta$, we show that \eref{ksjhjhhjdskdjjhwuel} is strongly accurate with respect to (hidden) slow variables and weakly (in the sense of measures) accurate with respect to (hidden) fast variables .
Motivated by this observation, we introduce  the related notion of  {\bf two-scale flow convergence} in analogy with homogenization theory for elliptic PDEs \cite{Ngu90, Allaire1992} and call it F-convergence for short. $F$-convergence is close in spirit to
the Young measure approach to computing slowly advancing fast
oscillations introduced in \cite{Art07, Art07b}.

\subsection{Two-scale flow convergence}\label{saskjagsjhgs}
Let $(\xi_t^\epsilon)_{t\in \R^+}$ be a sequence of processes on $\R^d$ (functions from $\R^+$ to $\R^d$)  indexed by $\epsilon>0$. Let $(X_t)_{t\in \R^+}$ be a process on $\R^{d-p}$ ($p \geq 0$).
Let $x \mapsto \nu(x,dz)$ be a function from $\R^{d-p}$ into the space of probability measures on $\R^d$.
\begin{Definition}
We say that the process $\xi^\epsilon_t$ F-{\bfi converges to} $\nu(X_t,dz)$ as $\epsilon \downarrow 0$ and write $\xi^\epsilon_t \xrightarrow [\epsilon \rightarrow 0]{F} \nu(X_t,dz)$ if and only if for all functions $\varphi$ bounded and uniformly Lipshitz-continuous on $\R^d$, and for all $t>0$,
\begin{equation}
\lim_{h\rightarrow 0} \lim_{\epsilon \rightarrow 0} \frac{1}{h}\int_{t}^{t+h}\varphi(\xi_s^\epsilon)\,ds=
\int_{\R^d} \varphi(z)\nu(X_t,dz)
\end{equation}
\end{Definition}

\subsection{Asymptotic convergence result}
Our convergence theorem requires that $u_t^\epsilon$ and $\bar{u}_t$ do not blow up as $\epsilon \downarrow 0$; more precisely, we will assume that the following conditions are satisfied.
\begin{Condition}\label{lsdsddsdeehxA3}
Assume that:
\begin{enumerate}
\item[{\rm 1.}]  $F$ and $G$ are  Lipschitz  continuous.
\item[{\rm 2.}] For all $u_0$, $T>0$, the trajectories  $(u_{t}^\epsilon)_{0\leq t\leq T}$ are uniformly bounded in $\epsilon$.
\item[{\rm 3.}] For all $u_0$, $T>0$, the trajectories  $(\bar{u}_{t}^\epsilon)_{0\leq t\leq T}$ are uniformly bounded in $\epsilon$, $0< \delta \leq h_0$, $\tau \leq \min(\tau_0 \epsilon, \delta)$.
\end{enumerate}
\end{Condition}

For $\pi$, an arbitrary measure on $\R^d$, we define $\eta^{-1}*\pi$ to be the push forward of the measure $\pi$ by $\eta^{-1}$.
\begin{Theorem}\label{thm01}
Let $u_t^\epsilon$ be the solution to \eref{fullsystem} and $\bar{u}_{t}$ be defined by \eref{ksjhjhhjdskdjjhwuel}.
 Assume that equation \eref{lsfassddlfsssekjlkd} and conditions \ref{lsdsddsdeehxA1}, \ref{lsdsddsdeehxA2} and \ref{lsdsddsdeehxA3}  are satisfied, then
 \begin{itemize}
\item $u_t^\epsilon$ $F$-converges to $\eta^{-1}*\big(\delta_{X_t}\otimes \mu(X_t,dy)\big)$ as $\epsilon \downarrow 0$ where $X_t$ is the solution to
\begin{equation}\label{skljhdkhdjksdh}
\dot{X}_t=\int g(X_t,y)\,\mu(X_t,dy)\quad \quad X_0=x_0.
\end{equation}
\item
$\bar{u}_{t}$ $F$-converges to $\eta^{-1}*\big(\delta_{X_t}\otimes \mu(X_t,dy)\big)$   for  $\epsilon\leq \delta/(-C\ln \delta)$,
$\frac{\tau}{\epsilon}\downarrow 0$,  $\frac{\epsilon}{\tau} \delta \downarrow 0$ and $(\frac{\tau}{\epsilon})^2 \frac{1}{\delta} \downarrow 0$.
\end{itemize}
\end{Theorem}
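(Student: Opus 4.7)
The first bullet is classical averaging in the $(x,y)$ coordinates of Condition~\ref{lsdsddsdeehxA1}. I would invoke the averaging result cited after \eref{kfgfgdserreredejhd} to obtain $x^\epsilon_t\to X_t$ uniformly on compacts, and then upgrade this to $F$-convergence. Given a bounded Lipschitz $\varphi$ on $\R^d$, set $\tilde\varphi:=\varphi\circ\eta^{-1}$, fix $t$ and a small window $h>0$, and on $[t,t+h]$ replace $\tilde\varphi(x^\epsilon_s,y^\epsilon_s)$ by $\tilde\varphi(X_t,y^\epsilon_s)$ at cost $O(h)+o_\epsilon(1)$ using the Lipschitz continuity of $\tilde\varphi$ and uniform convergence of $x^\epsilon$. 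Rescaling $s=\epsilon r$, the rescaled process $r\mapsto y^\epsilon_{\epsilon r}$ satisfies $\dot Y=f(X_t,Y)+o(1)$ on a stretched window of length $h/\epsilon\to\infty$, and Condition~\ref{lsdsddsdeehxA2} applied to this frozen-slow fast flow delivers the ergodic limit $\int\tilde\varphi(X_t,y)\,\mu(X_t,dy)$; taking first $\epsilon\downarrow0$ and then $h\downarrow0$ gives the desired $F$-convergence.

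For the second bullet the starting observation is that, because $F$ contributes only to $\dot y$ in \eref{kfgfgdiuuiusedejhd}, the map $\eta$ conjugates $\Phi^0_{\delta-\tau}$ into a flow that freezes $y$ and advances $x$ by $\dot x=g(x,y)$, while by consistency \eref{lsfassddlfsssekjlkd} the on-step $\Phi^{1/\epsilon}_\tau$ acts in $(x,y)$ coordinates as the Euler step $(x,y)\mapsto(x+\tau g(x,y),\,y+(\tau/\epsilon)f(x,y))+O((\tau/\epsilon)^2)$. Writing $(\bar x_k,\bar y_k):=\eta(\bar u_{k\delta})$, one mesoscopic step therefore reads
\[
\bar x_{k+1}=\bar x_k+\delta\,g(\bar x_k,\bar y_{k+1})+O\bigl(\delta^2+(\tau/\epsilon)^2\bigr),\qquad \bar y_{k+1}=\bar y_k+(\tau/\epsilon)f(\bar x_k,\bar y_k)+O\bigl((\tau/\epsilon)^2\bigr),
\]
so $\{\bar y_k\}$ is precisely an Euler discretization of $\dot Y=f(\bar x_k,Y)$ at step $\Delta:=\tau/\epsilon$, while $\{\bar x_k\}$ is a coarse update whose drift depends on the (rapidly varying) $\bar y$.

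The heart of the proof is then to show that on any window of $N=h/\delta$ coarse steps, the discrete time-average $N^{-1}\sum_{k}\tilde\varphi(\bar x_{k},\bar y_{k})$ converges to $\int\tilde\varphi(X_t,y)\,\mu(X_t,dy)$. The scale relations \eref{eqlimits} are calibrated exactly for this: the effective fast time is $N\Delta=h\tau/(\delta\epsilon)\to\infty$ thanks to $\delta\ll\tau/\epsilon$, giving long-time ergodic sampling, while the cumulative Euler truncation is $N\Delta^2=h(\tau/\epsilon)^2/\delta\to 0$ thanks to $(\tau/\epsilon)^2\ll\delta$, letting one compare the discrete $\bar y$-orbit to the continuous trajectory of $\dot Y=f(X_t,Y)$ by a Gronwall estimate (with $\bar x$ treated as frozen at $X_t$, which is valid since it varies by only $O(h)$ over the window). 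Condition~\ref{lsdsddsdeehxA2} then supplies the ergodic average, and injecting this back into the $\bar x$-update yields an effective discrete dynamics with drift $\bar g(x)=\int g(x,y)\,\mu(x,dy)$; a discrete Gronwall argument over the $O(1/\delta)$ coarse steps, for which the logarithmic restriction $\epsilon\leq\delta/(-C\ln\delta)$ absorbs the accumulated exponential factor, gives $\bar x_{\lfloor t/\delta\rfloor}\to X_t$, and combining the two pieces produces the claimed $F$-convergence of $\bar u_t$. I expect the main obstacle to be this ergodic-average claim for the Euler-type discretization of the fast flow: the discrete orbit generally departs from the continuous one on timescales shorter than the mixing time supplied by Condition~\ref{lsdsddsdeehxA2}, so one must pass to an ergodic estimate that survives discretization while simultaneously handling the slow drift of $\bar x_k$ and the resulting feedback into $\mu(\bar x_k,\cdot)$.
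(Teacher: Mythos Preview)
Your overall strategy matches the paper's: pass to $(x,y)$ coordinates, interpret one FLAVOR mesostep as an Euler update of size $\Delta:=\tau/\epsilon$ for the fast variable together with a $\delta$-update for the slow variable, and show that the discrete fast orbit delivers the correct ergodic average. The scale bookkeeping you give---fast time $N\Delta=h\tau/(\delta\epsilon)\to\infty$ for mixing, cumulative local error $N\Delta^2=h(\tau/\epsilon)^2/\delta\to 0$---is exactly the mechanism behind \eref{eqlimits}.

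The genuine gap is the sentence ``letting one compare the discrete $\bar y$-orbit to the continuous trajectory of $\dot Y=f(X_t,Y)$ by a Gronwall estimate.'' A Gronwall comparison between the Euler iterates $\{\bar y_k\}$ and the continuous fast flow over the \emph{entire} window of fast time $N\Delta\to\infty$ produces a factor $e^{CN\Delta}\to\infty$, which annihilates the $O(\Delta)$ global Euler error you are trying to exploit. You correctly identify this tension in your last sentence, but the proposal does not say how to resolve it, and without a resolution the argument as written fails.

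The paper's resolution (Lemma~\ref{gjkwhgheegd}) is a block-resetting argument with a free parameter $m$. One groups the mesosteps into blocks of $m$, and \emph{within each block} compares the discrete $\bar y$ to the exact flow of $\dot Y=f(\bar x_{\text{block start}},Y)$ launched from the block's initial value; the Gronwall factor is then only $e^{Cm\tau/\epsilon}$, while each block contributes an ergodic error $E(m\tau/\epsilon)$ via Condition~\ref{lsdsddsdeehxA2}. One then optimizes over $m$, choosing $m\tau/\epsilon$ of order $\ln\bigl((\delta\epsilon/\tau+\tau/\epsilon)^{-1}\bigr)$ so that the exponential is balanced by the other small quantities. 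A parallel subdivision argument with a parameter $N$ (Lemma~\ref{wgehedghg3d}) handles the continuous-side ergodic term and is precisely where the hypothesis $\epsilon\le\delta/(-C\ln\delta)$ enters. The paper also organizes the slow-variable estimate differently from your sketch: rather than comparing $\bar x$ directly to $X_t$, Lemma~\ref{lkhdkjwhdkhe} compares $\bar x_{n\delta}$ to $x^\epsilon_{n\delta}$ and channels both the continuous and discrete ergodic errors into a single quantity $\sup_l|J(l)|$ that is then bounded by the two lemmas above.
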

\begin{Remark}
The $F$-convergence of $u_t^\epsilon$  to $\eta^{-1}*\big(\delta_{X_t}\otimes \mu(X_t,dy)\big)$ can be restated as
\begin{equation}
\lim_{h\rightarrow 0} \lim_{\epsilon \rightarrow 0} \frac{1}{h}\int_{t}^{t+h}\varphi(u_s^\epsilon)\,ds=
\int_{\R^p} \varphi(\eta^{-1}(X_t,y))\mu(X_t,dy)
\end{equation}
for all functions $\varphi$ bounded and uniformly Lipshitz-continuous on $\R^d$, and for all $t>0$.
\end{Remark}
\begin{Remark}
Observe that $g$ comes from \eref{kfgfgdserreredejhd}. It is not explicitly known and does not need to be explicitly known for the implementation of the proposed method.
\end{Remark}
\begin{Remark}\label{Remspeednew}
The limits on $\epsilon, \tau$ and $\delta$ are in essence stating that FLAVOR is accurate provided that $\tau\ll \epsilon$ ($\tau$ resolves the stiffness of \eref{fullsystem}) and equation \eref{eqlimits} is satisfied.
\end{Remark}
\begin{Remark}
Throughout this paper, $C$ will refer to an appropriately large enough constant independent from $\epsilon,\delta, \tau$. To simplify the presentation of our results, we use the same letter $C$ for expressions such as $2C e^C$ instead of writing it as a new constant $C_1$ independent from $\epsilon,\delta, \tau$.
\end{Remark}

\subsection{Rationale and mechanism behind FLAVORS}\label{rationale}
We will now explain the rationale and mechanism behind FLAVORS. We refer to Subsection \ref{subap1} of the appendix for the detailed proof of Theorem \ref{thm01}. Let us start by considering the case where $\eta$ is the identity diffeomorphism.
Let $\varphi^{\frac{1}{\epsilon}}$ be the flow of \eref{kfgfgdiuuiusedejhd}. Observe that  $\varphi^{0}$  (obtained from $\varphi^{\frac{1}{\epsilon}}$ by setting the parameter $\frac{1}{\epsilon}$ to zero) is
 the flow of \eref{kfgfgdiuuiusedejhd} with $y^\epsilon$ frozen, i.e.,
\begin{equation}\label{kfgfgwdsdejhd}
\varphi^0(x,y)=(\hat{x}_t,y)\quad \text{where $\hat{x}_t$ solves}\quad
\frac{d\hat{x}}{dt}=g(\hat{x},y),\quad \hat{x}_0=x .
\end{equation}
The main effect of FLAVORS is to  average the flow of \eref{kfgfgdiuuiusedejhd} with respect to fast degrees of freedom via splitting and re-synchronization. By splitting, we refer to the substitution of the flow $\varphi^{\frac{1}{\epsilon}}_\delta$ by composition of $\varphi^0_{\delta-\tau}$ and $\varphi^{\frac{1}{\epsilon}}_\tau$, and by re-synchronization we refer to the distinct time-steps $\delta$ and $\tau$ whose effects are to advance the internal clock of fast variables by $\tau$ every step of length $\delta$. By averaging, we refer to the fact that
 FLAVORS approximates the flow $\varphi^{\frac{1}{\epsilon}}_h$ by the flow
\begin{equation}\label{kfgssdejhd}
\varphi_h:=\big(\varphi^0_{\frac{h}{M}-\tau}\circ \varphi^{\frac{1}{\epsilon}}_{\tau}\big)^M
\end{equation}
where $h$ is a coarse time step resolving the slow time scale associated with $x^\epsilon$, $M$ is a positive integer corresponding to the number of samples used to average the flow ($\delta$ is identified with
$\frac{h}{M}$) and $\tau$ is a fine time step resolving the fast time scale, of the order of $\epsilon$, and associated with $y^\epsilon$. In general, analytical formulae are not available for  $\varphi^0$ and $\varphi^{\frac{1}{\epsilon}}$ and numerical approximations are used instead.

Observe that when FLAVORS are applied to systems with explicitly separated slow and fast processes, they lead to integrators that are locally in the neighborhood of those obtained with  HMM (or equation free) methods with a reinitialization of the fast variables at macrotime $n$ by their final value at macrotime step $n-1$ and  with only one microstep per macrostep \cite{MR2165382, Seamless09}.

We will now consider the situation where $\eta$ is not the identity diffeomorphism and give the rationale behind the limits \eref{eqlimits}.
 \[\xymatrix{
 \bar{u}_{n\delta} \ar[r]^{\Phi^\frac{1}{\epsilon}_\tau}\ar@<1ex>[d]^{\eta} & \bar{u}_{n\delta+\tau} \ar[r]^{\Phi^0_{\delta-\tau}}\ar@<1ex>[d]^{\eta} & \bar{u}_{(n+1)\delta}\ar@<1ex>[d]^{\eta}\\
  (\bar{x},\bar{y})_{n\delta} \ar[r]^{\Psi^\frac{1}{\epsilon}_\tau}\ar@<1ex>[u]^{\eta^{-1}} & (\bar{x},\bar{y})_{n\delta+\tau} \ar[r]^{\Psi^0_{\delta-\tau}}\ar@<1ex>[u]^{\eta^{-1}}  & (\bar{x},\bar{y})_{(n+1)\delta}\ar@<1ex>[u]^{\eta^{-1}}
  } \]
 As illustrated in the above diagram, since $(\bar{x}_t,\bar{y}_t)=\eta(\bar{u}_t)$,
 simulating $\bar{u}_{n\delta}$ defined in \eref{ksjhjhhjdskdjjhwuel} is equivalent to simulating the discrete process
\begin{equation}
(\bar{x}_{n\delta},\bar{y}_{n\delta}):=\big(\Psi^{\frac{1}{\epsilon}}_{\delta-\tau}\circ \Psi^0_{\tau}\big)^n (x_{0},y_{0})
\end{equation}
where
\begin{equation}
\Psi^{\alpha}_h:= \eta \circ \Phi^\alpha_h \circ \eta^{-1}
\end{equation}
Observe that the accuracy (in the topology induced by F-convergence) of $\bar{u}_t$ with respect to $u^\epsilon_t$, solution of \eref{fullsystem}, is equivalent to that of $(\bar{x}_t,\bar{y}_t)$ with respect to $(x^\epsilon_t,y^\epsilon_t)$ defined by \eref{kfgfgdiuuiusedejhd}.
Now, for the clarity of the presentation, assume that
\begin{equation}\label{simpleexamplephi}
\Phi^{\alpha}_h(u)=u+h G(u)+\alpha h F(u)
\end{equation}
Using Taylor's theorem and \eref{simpleexamplephi}, we obtain that
\begin{equation}\label{acc1}
\Psi^\alpha_h(x,y)=(x,y)+h \big(g(x,y),0\big)+\alpha h  \big(0,f(x,y)\big)+\int_0^1 v^T \operatorname{Hess}\eta(u+t v) v (1-t)^2\,dt
\end{equation}
with
\begin{equation}\label{acc2}
u:=\eta^{-1}(x,y)\quad \text{and}\quad v:=h ( G+\alpha  F)\circ \eta^{-1}(x,y)
\end{equation}
It follows from equations \eref{acc1} and \eref{acc2} that $\Psi^\frac{1}{\epsilon}_h$ is a first order accurate integrator approximating the flow of \eref{kfgfgdiuuiusedejhd} and $\Psi^0_h$ is a first order accurate integrator approximating the flow of \eref{kfgfgwdsdejhd}.
Let $h$ be a coarse time step and $\delta$ a mesostep.
Since $\bar{x}$ remains nearly constant over the coarse time step,
the switching (on and off) of the stiff parameter $\frac{1}{\epsilon}$ averages the drift $g$ of $\bar{x}$ with respect to the trajectory of $\bar{y}$ over $h$. Since the coarse step $h$ is composed of $\frac{h}{\delta}$ mesosteps, the internal clock of the fast process is advanced by $\frac{h}{\delta}\times \frac{\tau}{\epsilon}$. Since $h$ is of the order of  one, the trajectory of $\bar{y}$ is mixing with respect to the local ergodic measure $\mu$ provided that $\frac{\tau}{\delta \epsilon}\gg 1$, i.e.
\begin{equation}\label{akghhgjdhgsjhdgg}
\delta \ll \frac{\tau}{\epsilon}
\end{equation}
Equation \eref{akghhgjdhgsjhdgg} corresponds to the right hand side of equation \eref{eqlimits}.
If $\eta$ is a non-linear diffeomorphism (with non-zero Hessian), it also follows from equations \eref{acc1} and \eref{acc2} that each invocation of the  integrator $\Psi^\frac{1}{\epsilon}_\tau$ occasions an error (on the accuracy of the slow process) proportional to  $(\frac{\tau}{\epsilon})^2$. Since during the coarse time step $h$, $\Psi^\frac{1}{\epsilon}_\tau$ is solicited $\frac{h}{\delta}$-times, it follows that the  error accumulation during $h$ is $\frac{h}{\delta}\times (\frac{\tau}{\epsilon})^2$. Hence, the accuracy of the integrator requires that $\frac{1}{\delta}\times (\frac{\tau}{\epsilon})^2\ll 1$, i.e.
\begin{equation}\label{akghhgjdhgsjhdgg2}
\big(\frac{\tau}{\epsilon}\big)^2 \ll \delta
\end{equation}
Equation \eref{akghhgjdhgsjhdgg2} corresponds to the left hand side of equation \eref{eqlimits}.

Observe that if $\eta$ is linear, its Hessian is null and the remainder in the right hand side of \eref{acc1} is zero. It follows that if
$\eta$ is linear, the error accumulation due to fine time steps on slow variables is zero and  condition \eref{akghhgjdhgsjhdgg} is sufficient for the accuracy of the integrator.

It has been observed in \cite{Nested07} and in Section 5 of \cite{Eric07HMMlike} that slow variables do not need to be identified with
HMM/averaging type integrators  if the relation between original and slow variables is linear or a permutation and if
\begin{equation}\label{jsgjhgsghdgdue}
\frac{\Delta t}{M}\ll \frac{\tau}{\epsilon}
\end{equation}
where is $M$ the number of fine-step iterations used by HMM to compute the average the drift of slow variables and $\Delta t$ is the coarse time step (in HMM) along the direction of the averaged drift.
The analysis of FLAVORS associated with equation \eref{acc1} reaches a similar conclusion if $\eta$ is linear in the sense that
the error caused by the Hessian of $\eta$ in \eref{acc1} is zero and in the (sufficient) condition \eref{akghhgjdhgsjhdgg} is analogous to \eref{jsgjhgsghdgdue} for $M=1$.
It is also stated  on Page 2 of \cite{Nested07} that \emph{``there are counterexamples showing that algorithms of the same spirit do not work
for deterministic ODEs with separated time scales if the slow variables are not explicitly identified and made use of. But in the present context, the slow variables are linear functions of the original variables, and this is the reason
why the seamless algorithm works.''}
Here, the analysis of FLAVORS associated with equation \eref{acc1} shows an algorithm based on an averaging principle would indeed, in general, not work if $\eta$ is nonlinear (and \eref{akghhgjdhgsjhdgg2} not satisfied) due to the error accumulation (on slow variables) associated with the Hessian of $\eta$. However, the above analysis also shows that if condition \eref{akghhgjdhgsjhdgg2} is satisfied, then, although $\eta$ may be \emph{nonlinear},  flow averaging integrators will \emph{always} work without  \emph{identifying slow variables}.

\subsection{Non asymptotic convergence result}

\begin{Theorem}\label{thm01b}
Under assumptions and notations of theorem \ref{thm01},  there exists $C>0$ such that for $\delta <h_0$, $\tau <\tau_0 \epsilon $ and $t>0$,
\begin{equation}\label{jkdsgkdjshgdsdkjghe}
|x_{t}^\epsilon-\eta^x(\bar{u}_{t})|\leq C e^{C t} \chi_1(u_0,\epsilon,\delta,\tau)
\end{equation}
and
\begin{align}
& \left| \frac{1}{T}\int_{t}^{t+T}\varphi(\bar{u}_{s})\,ds  -\int_{\R^p} \varphi(\eta^{-1}(X_t,y))\mu(X_t,dy) \right| \nonumber \\
& \qquad \qquad \qquad  \leq \chi_2(u_0,\epsilon,\delta,\tau,T,t) (\|\varphi\|_{L^\infty}+\|\nabla \varphi\|_{L^\infty} ) \label{lkslkcdhkhsdedj3}
\end{align}
where  $\chi_1$ and $\chi_2$ are functions converging towards zero as   $\epsilon\leq \delta/(C\ln \frac{1}{\delta})$,
$\frac{\tau}{\epsilon}\downarrow 0$,  $\frac{\epsilon}{\tau} \delta \downarrow 0$ and $(\frac{\tau}{\epsilon})^2 \frac{1}{\delta} \downarrow 0$ (and $T\downarrow 0$ for $\chi_2$).
\end{Theorem}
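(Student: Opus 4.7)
The plan is to work entirely in the slow/fast coordinates $(x,y)=\eta(u)$, so that the FLAVOR iteration coincides with the discrete process $(\bar{x}_{n\delta},\bar{y}_{n\delta})=(\Psi^0_{\delta-\tau}\circ \Psi^{1/\epsilon}_\tau)^n(x_0,y_0)$, and to compare this discrete process with the averaged limit $X_t$ of \eref{skljhdkhdjksdh}. The first bound \eref{jkdsgkdjshgdsdkjghe} follows from combining two ingredients: (a) the classical averaging estimate $|x^\epsilon_t - X_t| \leq C e^{Ct}\chi_0(\epsilon)$, which, once the quantitative mixing rate $E(T)$ from Condition \ref{lsdsddsdeehxA2} is inserted in the proof of the standard averaging theorem, yields the regime $\epsilon \leq \delta/(C\ln(1/\delta))$; and (b) a discrete analogue $|X_t-\eta^x(\bar{u}_t)|\leq Ce^{Ct}\chi_1'$ for the FLAVOR process. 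The second bound \eref{lkslkcdhkhsdedj3} will then follow from the ergodicity of the fast component $\bar y_s$ around $X_t$ on mesoscopic time windows, which allows replacing time averages of $\varphi(\bar u_s)$ by $\int\varphi(\eta^{-1}(X_t,y))\mu(X_t,dy)$ up to a residual controlled by $E\big(T\tau/(\delta\epsilon)\big)$, with $T\downarrow 0$ ensuring $X_s\approx X_t$ on $[t,t+T]$.

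For the per-step analysis I would apply the Taylor expansion \eref{acc1}--\eref{acc2} of $\Psi^\alpha_h$ with $\alpha=1/\epsilon$ and $\alpha=0$. The fast step of length $\tau$ moves $\bar y$ by approximately $(\tau/\epsilon) f(\bar x,\bar y)$, i.e., advances the frozen fast flow by $\tau/\epsilon$ units; it moves $\bar x$ only by $O(\tau)$ and produces a Hessian remainder from $\eta$ of size $O((\tau/\epsilon)^2)$. The slow step of length $\delta-\tau$ adds $(\delta-\tau)g(\bar x,\bar y)$ to $\bar x$, leaves $\bar y$ essentially unchanged, and has remainder $O(\delta^2)$. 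Composing and summing over the $h/\delta$ mesosteps in a coarse window of length $h$, the $O((\tau/\epsilon)^2)$ Hessian terms accumulate to $O\big(h(\tau/\epsilon)^2/\delta\big)$, forcing the condition $(\tau/\epsilon)^2/\delta\ll 1$, while the $O(\delta^2)$ terms accumulate to $O(h\delta)$. The leading-order increment of $\bar x$ takes the Riemann-type form $\delta \sum_k g(\bar{x}_{k\delta},\tilde y_k)$, where $\tilde y_k$ is the fast state produced by one iterate of $\tau/\epsilon$ units of frozen fast time.

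The main obstacle is showing that this Riemann-type sum, over a mesoscopic window on which $\bar x$ is nearly constant, is close to $\int g(\bar x,y)\mu(\bar x,dy)$ times the window length. The iterates $\tilde y_k$ form a subsampling, at spacing $\tau/\epsilon$ of fast-clock time, of an approximation to the solution of $\dot Y=f(\bar x,Y)$; within a window of length $\Delta$ there are $\Delta/\delta$ samples covering a total fast time $\Delta\tau/(\delta\epsilon)$, which must diverge, forcing $\delta\ll \tau/\epsilon$, and Condition \ref{lsdsddsdeehxA2} then furnishes a mixing error of size $E\big(\Delta\tau/(\delta\epsilon)\big)$. The nontrivial technical point is that the sampled trajectory is not the exact flow of the frozen fast ODE but its numerical composition with $\bar x$ drifting slowly, together with the Hessian defect of $\eta$; a perturbation-of-ergodicity argument, applied stepwise and combined with a Lipschitz/Gronwall bound on the drift of $\bar x$, reduces this to the ergodicity bound for the exact frozen flow. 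A final Gronwall argument on $|X_{n\delta}-\bar x_{n\delta}|$ using Lipschitz continuity of $x\mapsto \int g(x,y)\mu(x,dy)$ produces the $Ce^{Ct}$ factor and aggregates the three contributions $O(\delta)$, $O((\tau/\epsilon)^2/\delta)$, and $E\big(T\tau/(\delta\epsilon)\big)$ into $\chi_1$. For \eref{lkslkcdhkhsdedj3}, once $\bar x_s$ is uniformly close to $X_t$ on $[t,t+T]$, the time average of $\varphi(\bar u_s)=\varphi(\eta^{-1}(\bar x_s,\bar y_s))$ is controlled by the same ergodicity estimate applied to $\bar y_s$ with $g$ replaced by $\varphi\circ\eta^{-1}(X_t,\cdot)$, producing $\chi_2$ with the additional $T\downarrow 0$ needed to freeze $X$.
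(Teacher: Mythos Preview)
Your approach is essentially correct and tracks the paper's proof in Subsection~\ref{subap1}, with one organizational difference: the paper compares $\bar x_{n\delta}$ directly to $x^\epsilon_{n\delta}$ (Lemma~\ref{lkhdkjwhdkhe}), inserting the averaged drift $\bar g$ as a telescoping intermediary and splitting the residual into $J_1$ (ergodic defect of the true fast process $y^\epsilon$, Lemma~\ref{wgehedghg3d}) and $J_2$ (ergodic defect of the FLAVOR fast iterates $\bar y$, Lemma~\ref{gjkwhgheegd}), whereas you triangulate through the averaged limit $X_t$. These are equivalent once one unpacks the definition of $X_t$; the paper's route is slightly more economical because the target quantity is $|x^\epsilon_t-\bar x_t|$ itself, so no separate quantitative averaging theorem for $|x^\epsilon_t-X_t|$ is needed.

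Two points deserve more care than your sketch provides. First, your ``perturbation-of-ergodicity argument'' is exactly the content of Lemma~\ref{gjkwhgheegd}, and it hinges on a free blocking parameter: one groups the mesosteps into blocks of $m$, builds on each block a continuous-time auxiliary $\tilde y^a$ driven by the frozen vector field $f(\bar x_{jm\delta},\cdot)$, and then balances the mixing gain $E(m\tau/\epsilon)$ against the exponentially growing Gronwall perturbation $e^{Cm\tau/\epsilon}$ by \emph{optimizing} over $m$. Your mesoscopic window $\Delta$ plays the role of $m\delta$, but you do not say that $\Delta$ must be chosen to balance these two competing effects; without this optimization the estimate does not close, since $E(T)\to 0$ requires $T\to\infty$ while the perturbation blows up in the same $T$. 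Second, the condition $\epsilon\le \delta/(C\ln(1/\delta))$ does not come from the classical averaging theorem as you suggest: in the paper it arises in Lemma~\ref{wgehedghg3d} from balancing $\delta e^{C\delta/(N\epsilon)}$ against $E(\delta/(N\epsilon))$ over an auxiliary integer $N$. In your organization the analogous optimization would have to appear in part~(b), not part~(a).
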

\begin{Remark}
For  $\epsilon\leq \delta/(-C\ln \delta)$ and $\delta \frac{\epsilon}{\tau}+\frac{\tau}{\epsilon} \leq 1$, the following holds
\begin{equation}
\begin{split}
\chi_1(u_0,\epsilon,\delta,\tau)\leq
\sqrt{\delta}+\big(\frac{\tau}{\epsilon}\big)^2 \frac{1}{\delta}+E\big(\frac{1}{C}\ln \frac{1}{\delta}\big)+\big(\frac{\delta \epsilon}{\tau}\big)^{\frac{1}{2}}+\big(\frac{\tau}{\epsilon}\big)^{\frac{1}{2}}+ E\Big(\frac{1}{C}\ln \Big(\big(\frac{\delta \epsilon}{\tau}+\frac{\tau}{\epsilon}\big)^{-1}\Big)\Big)
\end{split}
\end{equation}
and $\chi_2$ satisfies a similar inequality.
\end{Remark}
\begin{Remark}\label{Remspeed}
Choosing $\tau \sim \gamma \epsilon$ and $\delta \sim \gamma \frac{\tau}{\epsilon}$,where $\gamma$ is a small constant independent from $\epsilon$, Theorem \ref{thm01b} shows that the approximation error of FLAVOR is bounded by a function of $\gamma$ converging towards zero as $\gamma \downarrow 0$. If follows that the speed up is of the order of $\frac{\delta}{\tau}\sim \frac{\gamma}{\epsilon}$, i.e., scales like $\frac{1}{\epsilon}$ at fixed accuracy.  In order to be able to compare FLAVOR with integrators resolving all the fine time steps we have limited the speed up in the numerical experiments  to $200\times$ (but the latter can be arbitrary large as $\epsilon \downarrow 0$).  For sufficiently small $\epsilon$, we observe that FLAVORS with microstep $\tau$ and mesostep $\delta$ overperform their associated legacy integrator with the same microstep $\tau$ over large simulation times (we refer  to Section \ref{FPUsec} on the Fermi-Pasta-Ulam problem). This phenomenon is caused by an error accumulation at each tick (microstep) of the clock of fast variables. Since FLAVORS (indirectly, i.e., without identifying fast variables) slow down  the speed of this clock from $\frac{1}{\epsilon}$ to a value  $\frac{\tau}{\delta \epsilon}\sim \frac{1}{\gamma}$ independent from $\epsilon$ this error does not blow up as $\epsilon \downarrow 0$ (as opposed to an integrator resolving all fine time steps). Because of this reason, if this error accumulation on fast variables is exponential, then the speed up at fixed accuracy does not scale  like $\frac{1}{\epsilon}$, but like $e^{\frac{T}{\epsilon}}$ where $T$ is the total simulation time. A consequence of this phenomenon can be seen in Figure \ref{FPU_long} (associated with the FPU problem) where Velocity Verlet fails to capture the $\mathcal{O}(\epsilon^{-1})$ dynamics with a time step $h=10^{-5}$ whereas FLAVORS remain accurate with $\tau=10^{-4}$ and $\delta=2\cdot 10^{-3}$.
\end{Remark}
\begin{Remark}
\label{exponentialError}
The reader should not be surprised by the presence of the exponential factor $e^{Ct}$ in \eref{jkdsgkdjshgdsdkjghe}.
It is known that global errors for numerical approximations of ODEs grow, in general, exponentially with time (see for instance \cite{MR1227985}). These bounds are, however, already tight; consider, for instance, how error propagates in a generic numerical scheme applied to the special system of $\dot{x}=x$. It is possible to show that the increase of global errors is linear in time only for a restricted class of ODEs (using techniques from Lyapunov's theory of stability \cite{Viswanath01}). Notice that the constant $C$ in the exponential of our bound does not scale with $\epsilon^{-1}$, and therefore the bound is uniform and rather tight.
\end{Remark}
\begin{Remark}
We refer to \cite{MR2164093} for higher order averaging based methods. In particular, \cite{MR2164093} shows how, after identification of slow variables, balancing the different error contributions yields an explicit stable integration method having the order of the macro scheme.
\end{Remark}

\subsection{Natural FLAVORS}
Although convenient, it is not necessary to use legacy integrators to obtain FLAVORS.
More precisely, theorems \ref{thm01} and \ref{thm01b} remain valid if FLAVORS are defined to be algorithms simulating the discrete process
\begin{equation}\label{ksjhdskdjjhwue}
\bar{u}_{t}:=\big(\theta^G_{\delta-\tau}\circ \theta^\epsilon_{\tau}\big)^k(u_0) \quad \text{for}\quad k\delta \leq t <(k+1)\delta
\end{equation}
where $\theta_\tau^{\epsilon}$ and $\theta^{G}_{\delta-\tau}$ are two mappings  from $\R^d$ onto $\R^d$ (the former approximating the flow of the whole system \eref{fullsystem} for time $\tau$, and the  latter approximating the flow of $\dot{v}=G(v)$ for time $\delta -\tau$),  satisfying  the following conditions.
\begin{Condition}\label{lsddehlkssdsdsdeehx}
Assume that:
\begin{enumerate}
\item[{\rm 1.}] There exists $h_0, C>0$ such that for $h\leq h_0$ and any $u\in\R^d$,
\begin{equation}\label{hgfjhgdfsjhgfhf}
\big|\theta^{G}_{h}(u)-u-h G(u)\big|\leq C h^2
\end{equation}
\item[{\rm 2.}] There exists $\tau_0, C>0$,
such that for $\frac{\tau}{\epsilon}\leq \tau_0$  and any $u\in\R^d$,
\begin{equation}\label{lsfddlfskjlkd}
\left| \theta_\tau^{\epsilon}(u)-u-\tau G(u)-\frac{\tau}{\epsilon}F(u)  \right| \leq C \big(\frac{\tau}{\epsilon}\big)^2
\end{equation}
\item[{\rm 3.}] For all $u_0$, $T>0$, the discrete trajectories
$\Big(\big(\theta^G_{\delta-\tau}\circ \theta^\epsilon_{\tau}\big)^k(u_0)\Big)_{0\leq k \leq T/\delta}$
are uniformly bounded in $\epsilon$, $0< \delta \leq h_0$, $\tau \leq \min(\tau_0 \epsilon, \delta)$.
\end{enumerate}
\end{Condition}
Observe that \eref{ksjhjhhjdskdjjhwuel} is a particular case of \eref{ksjhdskdjjhwue} in which $\theta^\epsilon=\Phi^\frac{1}{\epsilon}$ and the mapping $\theta^G$ is obtained from the legacy integrator $\Phi^{\alpha}$ by setting $\alpha$ to zero.

\subsection{Related work}
\emph{Dynamical systems with multiple time scales pose a major problem in simulations because the small time steps required for stable integration
of the fast motions lead to large numbers of time steps required for the observation of slow degrees of freedom} \cite{TBM92, Hairer:04}. Traditionally, stiff dynamical systems have been separated into two classes with distinct integrators: stiff systems with fast transients and stiff systems with rapid oscillations \cite{Ariel:09, MR2069938, SSerna09}. The former has been solved using implicit schemes \cite{MR0315898, MR0080998, Hairer:04, MR1439506}, Chebyshev methods \cite{MR0443314, MR1923724} or the projective integrator approach \cite{MR1976207}.
These latter have been solved using filtering techniques \cite{MR0138200, MR1165724, MR717698} or Poincar\'{e} map techniques \cite{MR654346, MR1489260}.  We also refer to methods based on highly oscillatory quadrature \cite{MR2542877, MR2182817, MR1936107}, an area that has undergone significant developments in the last few years \cite{MR2303638}. It has been observed that \emph{at the present time, there exists no unified strategy for dealing with both classes of problems} \cite{MR2069938}. When slow variables can be identified, effective equations can be obtained by averaging the instantaneous drift driving those slow variables \cite{MR1020057}.
Two classes of numerical methods have been built on this observation: The equation-free method \cite{MR2041455, KevGio09} and the Heterogeneous Multiscale Method \cite{MR2314852, MR2164093, MR2069938, Ariel:08}. Observe that FLAVORS apply in a unified way to both stiff systems with fast transients and stiff systems with rapid oscillations, with or without noise, with a mesoscopic integration time step  chosen independently from the stiffness.

\subsection{Limitations of the method}
The proof of the accuracy of the method (theorems \ref{thm01} and \ref{thm01b}) is based on an averaging principle; hence, if $\epsilon$ is not small (the stiffness of the ODE is weak), although the method may be stable, there is no guarantee of accuracy. More precisely, the global error of the method is an increasing function of $\epsilon$, $\delta$, $\frac{\tau}{\epsilon}$, $\frac{\delta \epsilon}{\tau}$, $(\frac{\tau}{\epsilon})^2\delta$.
Writing $\gamma:=\frac{\tau}{\epsilon}$ the accuracy method requires $\gamma ^2 \ll \delta \ll \gamma$. Choosing $\delta=\gamma^{\frac{3}{2}}$, the condition $\epsilon \ll \delta \ll 1$ (related to computational gain) requires $\epsilon^{\frac{2}{3}}\ll \gamma \ll 1$ which can be satisfied only if $\epsilon$ is small.

The other limitation of the method lies in the fact that a stiff parameter $\frac{1}{\epsilon}$ needs to be clearly identified. In many examples of interest (Navier-Stokes equations, Maxwell's equations,...), stiffness is a result of nonlinearity, initial conditions or boundary conditions and not of the existence of a large parameter $\frac{1}{\epsilon}$.
Molecular dynamics can also create widely separated time-scales from non-linear effects; we refer, for instance, to \cite{Yanao:09} and references therein.

\subsection{Generic stiff ODEs}\label{natfla1}
FLAVORS have a natural generalization to systems of the form
\begin{equation}\label{jkhgjgdshhh}
\dot{u}^{\alpha,\epsilon}=F(u^{\alpha,\epsilon},\alpha,\epsilon)
\end{equation}
where $u \mapsto F(u,\alpha,\epsilon)$ is Lipshitz continuous.

\begin{Condition}\label{lsdsddcddsdeehx}
Assume that:
\begin{enumerate}
\item[{\rm 1.}]  $\epsilon \mapsto F(u,\alpha,\epsilon)$ is uniformly continuous in the neighborhood of $0$.
\item[{\rm 2.}] There exists a diffeomorphism $\eta:=(\eta^x,\eta^y)$,
from $\R^d$ onto $\R^{d-p}\times \R^p$, independent from $\epsilon, \alpha$, with uniformly bounded $C^1, C^2$ derivatives, such that the process $(x_t^{\alpha},y_t^{\alpha})=\big(\eta^x(u_t^{\alpha,0}),\eta^y(u_t^{\alpha,0})\big)$ satisfies, for all $\alpha \geq 1$, the ODE
\begin{equation}\label{kfgfgdiuucdqcdciusesdejhd}
\dot{x}^\alpha=g(x^\alpha,y^{\alpha})\quad x^\alpha_0=x_0 ,
\end{equation}
where $g(x,y)$ is Lipschitz continuous in $x$ and $y$ on bounded sets.

\item[{\rm 3.}] There exists a family of probability measures $\mu(x,dy)$ on $\R^p$
   such that for all $x_0,y_0,T$ $\big((x_0,y_0):=\eta(u_0)\big)$ and $\varphi$ uniformly bounded and Lipschitz
    \begin{equation}\label{fgfhgfhttf}
    \Big|\frac{1}{T}\int_0^T \varphi(y^{\alpha}_s)\,ds-\int_{\R^p} \varphi(y)\mu(x_0,dy)\Big|\leq \chi\big(\|(x_0,y_0)\|\big) \big(E_1(T)+E_2(T \alpha^\nu)\big) \|\nabla \varphi\|_{L^\infty}
    \end{equation}
where $r\mapsto \chi(r)$ is bounded on compact sets and $E_2(r)\rightarrow 0$ as $r\rightarrow \infty$ and $E_1(r)\rightarrow 0$ as $r\rightarrow 0$.
\item[{\rm 4.}] For all $u_0$, $T>0$, the trajectories  $(u_{t}^{\alpha,0})_{0\leq t\leq T}$ are uniformly bounded in $\alpha \geq 1$.
\end{enumerate}
\end{Condition}
\begin{Remark}
Observe that slow variables are not kept frozen in equation \eref{fgfhgfhttf}.
The error on local invariant measures induced by the (slow) drift of $x^\alpha$ is controlled by $E_2$. More precisely, the convergence of the right hand side of \eref{fgfhgfhttf} towards zero requires the convergence of $T$  towards zero and (at the same time) the divergence of $T \alpha^\nu$ towards infinity.
\end{Remark}

Assume that we are given a mapping $\Phi_h^{\alpha,\epsilon}$ from $\R^d$ onto $\R^d$ approximating the flow of \eref{jkhgjgdshhh}. If the parameter $\alpha$ can be controlled then $\Phi_h^{\alpha,\epsilon}$ can be used as a black box for accelerating the computation of solutions of \eref{jkhgjgdshhh}.

\begin{Condition}\label{lsduisdsdsdeehx}
Assume that:
\begin{enumerate}
\item[{\rm 1.}] There exists a constant $h_0>0$ such that $\Phi^{\alpha,\epsilon}$ satisfies for all $h\leq h_0 \min(\frac{1}{\alpha^\nu},1) $, $0<\epsilon \leq 1 \leq \alpha$
\begin{equation}\label{lsfasshjhhddlfskjlkd}
\big|\Phi_h^{\alpha,\epsilon}(u)-u-h F(u,\alpha,\epsilon) \big|\leq C(u) h^2 (1+\alpha^{2\nu})
\end{equation}
where $C(u)$ is bounded on compact sets.
\item[{\rm 2.}] For all $u_0$, $T>0$, the discrete trajectories
$\Big(\big(\Phi^{0,\epsilon}_{\delta-\tau}\circ \Phi^{\frac{1}{\epsilon},\epsilon}_{\tau}\big)^k(u_0)\Big)_{0\leq k \leq T/\delta}$
are uniformly bounded in $0<\epsilon\leq 1$, $0< \delta \leq h_0$, $\tau \leq \min(h_0 \epsilon^\nu, \delta)$.
\end{enumerate}
\end{Condition}

FLAVOR can be defined as the algorithm given by the process
\begin{equation}\label{ksjhjhhjdskdjjhhuhuuwue}
\bar{u}_{t}=\big(\Phi^{0,\epsilon}_{\delta-\tau}\circ \Phi^{\frac{1}{\epsilon},\epsilon}_{\tau}\big)^k(u_0) \quad \text{for}\quad k\delta \leq t <(k+1)\delta
\end{equation}
The theorem below shows the accuracy of FLAVORS for
 $\delta \ll h_0$, $\tau \ll \epsilon^\nu$ and $\big(\frac{\tau}{\epsilon^\nu}\big)^2 \ll\delta \ll \frac{\tau}{\epsilon^\nu}$.

\begin{Theorem}\label{thmdssdr2sddsd}
Let $u_t^{\frac{1}{\epsilon},\epsilon}$ be the solution to \eref{jkhgjgdshhh} with $\alpha=1/\epsilon$ and $\bar{u}_t$ be defined by \eref{ksjhjhhjdskdjjhhuhuuwue}. Assume that Conditions \ref{lsdsddcddsdeehx} and \ref{lsduisdsdsdeehx} are satisfied
 then
 \begin{itemize}
\item $u_t^{\frac{1}{\epsilon},\epsilon}$ $F$-converges towards $\eta^{-1}*\big(\delta_{X_t}\otimes \mu(X_t,dy)\big)$ as $\epsilon \downarrow 0$ where $X_t$ is the solution to
\begin{equation}\label{skdljhdkdedehdjksdh}
\dot{X}_t=\int_{\R^p} g(X_t,y)\,\mu(X_t,dy)\quad \quad X_0=x_0 ,
\end{equation}
\item As $\epsilon \downarrow 0$, $\tau \epsilon^{-\nu} \downarrow 0$,  $\delta \frac{\epsilon^\nu}{\tau } \downarrow 0$, $\frac{\tau^2}{\epsilon^{2\nu} \delta} \downarrow 0$, $\bar{u}_t$ $F$-converges towards $\eta^{-1}*\big(\delta_{X_t}\otimes \mu(X_t,dy)\big)$ as $\epsilon \downarrow 0$ where $X_t$ is the solution of \eref{skdljhdkdedehdjksdh}.
\end{itemize}
\end{Theorem}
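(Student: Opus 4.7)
The plan is to mirror the proof of Theorem \ref{thm01} (deferred to Subsection \ref{subap1}), with the scaling exponent $1$ in $\epsilon$ replaced everywhere by $\nu$, and with the local ergodic estimate \eref{fgfhgfhttf} playing the role of Condition \ref{lsdsddsdeehxA2}. First I would transform the dynamics through the diffeomorphism $\eta$ and work with the conjugated process $(\bar{x}_{k\delta},\bar{y}_{k\delta}):=\eta(\bar{u}_{k\delta})$, which satisfies the composition $\bar{u}_{(k+1)\delta}=\Psi^{0,\epsilon}_{\delta-\tau}\circ\Psi^{1/\epsilon,\epsilon}_{\tau}(\bar{u}_{k\delta})$ with $\Psi^{\alpha,\epsilon}_h:=\eta\circ\Phi^{\alpha,\epsilon}_h\circ\eta^{-1}$. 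Using \eref{lsfasshjhhddlfskjlkd} and Taylor expansion exactly as in \eref{acc1}--\eref{acc2}, one checks that $\Psi^{0,\epsilon}_h$ is a first-order integrator of \eref{kfgfgdiuucdqcdciusesdejhd} (which freezes $y$ and advances $x$ by $hg(x,y)$), while $\Psi^{1/\epsilon,\epsilon}_\tau$ advances both variables with the fast clock ticking at rate $\epsilon^{-\nu}$, up to a Hessian-of-$\eta$ remainder of size $(\tau\epsilon^{-\nu})^2$.

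For the first bullet, I would invoke classical two-scale averaging applied to \eref{jkhgjgdshhh}. Condition \ref{lsdsddcddsdeehx}.1 guarantees the regular perturbation limit $u^{1/\epsilon,\epsilon}\to u^{1/\epsilon,0}$ uniformly on compacts, so it suffices to treat $\alpha=1/\epsilon$ and $\epsilon=0$. The system is then in the form \eref{kfgfgdiuucdqcdciusesdejhd}, and Condition \ref{lsdsddcddsdeehx}.3, with test window $T$ chosen so that $T\downarrow 0$ and $T/\epsilon^\nu\uparrow\infty$, makes both $E_1(T)$ and $E_2(T\epsilon^{-\nu})$ vanish, yielding convergence of $x^{1/\epsilon}$ to the solution $X_t$ of \eref{skdljhdkdedehdjksdh} together with equidistribution of $y^{1/\epsilon}$ against $\mu(X_t,dy)$ on any mesoscopic window. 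F-convergence is then immediate from the definition in Subsection \ref{saskjagsjhgs}.

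For the second bullet I would argue via a Gronwall-type estimate on the coarse scale. Decompose the one-step update of $\bar{x}_{k\delta}$ as $(\delta-\tau)g(\bar{x}_{k\delta},\bar{y}_{k\delta})$ plus a remainder of size $O((\tau\epsilon^{-\nu})^2)+O(\delta^2)$ coming respectively from the nonlinearity of $\eta$ and the consistency of $\Phi^{0,\epsilon}$. Summing over the $h/\delta$ mesosteps of a coarse window yields a cumulative remainder $(h/\delta)\bigl((\tau\epsilon^{-\nu})^2+\delta^2\bigr)$, which is $o(1)$ under $(\tau\epsilon^{-\nu})^2\ll\delta\ll 1$. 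Simultaneously the fast variable $\bar{y}$ progresses along its own flow with total fast-clock time $k\cdot\tau\epsilon^{-\nu}$ after $k$ mesosteps; this fast clock advances by $h\tau/(\delta\epsilon^\nu)\to\infty$ on a coarse window as soon as $\delta\epsilon^\nu/\tau\downarrow 0$, so \eref{fgfhgfhttf} applied with $\bar{x}$ held approximately constant produces the effective average of $g(\bar{x},\cdot)$ against $\mu(\bar{x},\cdot)$. Combining the two gives $\eta^x(\bar{u}_{k\delta})\to X_t$ uniformly on compacts, and F-convergence of $\bar{u}_{k\delta}$ then follows as in the first bullet.

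The main technical obstacle is the last step: quantifying how well the discrete trajectory of $\bar{y}$, driven by the iterated approximate flow $\Psi^{1/\epsilon,\epsilon}_\tau$, samples the invariant family $\mu(X_t,dy)$ while $\bar{x}$ itself is drifting. Here two ingredients must be balanced. First, the drift of $\bar{x}$ during the ergodization window is exactly what the factor $E_2(T\alpha^\nu)$ in \eref{fgfhgfhttf} is designed to absorb (this is the only place where the generalization to arbitrary $\nu$ enters nontrivially beyond bookkeeping). Second, the replacement of the exact fast flow by $\Phi^{1/\epsilon,\epsilon}_\tau$ contributes a per-step error of $O((\tau/\epsilon^\nu)^2)$ via \eref{lsfasshjhhddlfskjlkd}, giving a total $(h/\delta)(\tau/\epsilon^\nu)^2$ that is $o(1)$ under the stated scaling $(\tau/\epsilon^\nu)^2/\delta\downarrow 0$. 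Everything else is standard Lipschitz/Gronwall bookkeeping once these two estimates are in place.
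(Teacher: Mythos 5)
Your plan follows essentially the same route as the paper: the paper's own proof of this theorem is only a sketch deferring to the detailed appendix arguments for Theorems \ref{thm01} and \ref{thm04}, and your outline (conjugation by $\eta$, first-order consistency of the conjugated maps $\Psi^{\alpha,\epsilon}_h$, Gronwall bookkeeping of the per-mesostep $(\tau/\epsilon^\nu)^2$ and $\delta^2$ remainders, the regular-perturbation step $u^{1/\epsilon,\epsilon}\to u^{1/\epsilon,0}$, and ergodization with $E_2$ absorbing the slow drift) is exactly that argument with $\epsilon$ replaced by $\epsilon^\nu$. The only point to handle carefully when fleshing it out is that, as in Lemma \ref{gjkwhgheegd}, the comparison of the discrete fast trajectory with the exact fast flow must be carried out over intermediate blocks of $m$ mesosteps with $m$ tuned against the Gronwall factor $e^{Cm\tau/\epsilon^\nu}$, rather than over a full coarse window as your "linear accumulation" phrasing suggests.
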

\begin{proof}
The proof of Theorem \ref{thmdssdr2sddsd} is similar to that of Theorem \ref{thm01} and \ref{thm04}. Only the idea of the proof will be given here.
The condition $\epsilon \ll 1$ is needed for the approximation of $u^{\alpha,\epsilon}$ by $u^{\alpha,0}$ and for the $F$-convergence of $u^{\frac{1}{\epsilon},0}$.
Since $y^\alpha_t=\eta^y(u^{\alpha,0}_t)$ the condition
  $\tau \ll \epsilon^\nu$ is used along with equation  \eref{lsfasshjhhddlfskjlkd} for the accuracy of $\Phi^{\frac{1}{\epsilon},\epsilon}_{\tau}$ in (locally) approximating  $y^\alpha_t$. The condition $\delta \ll \frac{\tau}{\epsilon^\nu}$ allows for the averaging of $g$  to take place prior to a significant change of $x^\alpha_t$; more precisely, it allows for $m\gg1$ iterations  of  $\Phi^{\frac{1}{\epsilon},\epsilon}_{\tau}$ prior to a significant change of $x^\alpha_t$.
  The condition $\big(\frac{\tau}{\epsilon^\nu}\big)^2\ll\delta$ is required in order to control the error accumulated  by $m$ iterations of $\Phi^{\frac{1}{\epsilon},\epsilon}_{\tau}$.
\end{proof}

\section{Deterministic mechanical systems: Hamiltonian equations}\label{jhhsghdjshdjg}

Since averaging with FLAVORS is obtained by flow composition, FLAVORS have an inherent extension to multiscale structure preserving integrators  for stiff Hamiltonian systems, i.e. ODEs of the form
\begin{equation}\label{ksdjjsdshkdjjksdhj}
\dot{p}=-\partial_q H(p,q) \quad \dot{q}=\partial_p H(p,q)
\end{equation}
where the Hamiltonian
\begin{equation}\label{kdshdkjhsjkdhjdhhhc}
H(q,p):=\frac{1}{2}p^T M^{-1} p+V(q)+\frac{1}{\epsilon} U(q)
\end{equation}
represents the total energy of a mechanical system with Euclidean phase space $\R^d \times \R^d$ or  a cotangent bundle $T^*\mathcal{M}$ of  a configuration manifold $\mathcal{M}$.

Structure preserving numerical methods for  Hamiltonian systems have been developed in the framework of geometric numerical integration \cite{Hairer:04, MR2132573} and variational integrators \cite{MaWe:01, LeMaOrWe2004b}. \emph{ The subject of geometric numerical integration deals with numerical integrators that preserve geometric properties of the flow of a differential equation, and it explains how structure preservation leads to an improved long-time behavior} \cite{MR2249159}. Variational integration theory derives integrators for mechanical
systems from discrete variational principles and are characterized by a  discrete Noether theorem. These  methods have excellent energy behavior over long integration runs because they are symplectic, i.e., by backward error analysis, they simulate a nearby mechanical system instead of
nearby differential equations. Furthermore, statistical properties of the dynamics such as Poincar\'{e} sections are well preserved even
with large time steps \cite{MR2496560}. Preservation of structures is especially important for long time simulations. Consider integrations of a harmonic oscillator, for example: no matter how small a time step is used, the amplitude given by Forward Euler / Backward Euler will increase / decrease unboundedly, whereas the amplitude given by Variational Euler (also known as symplectic Euler) will be oscillatory with a variance controlled by the step length.

These long term behaviors of structure-preserving numerical integrators motivated their extension to multiscale or stiff Hamiltonian systems. We refer to \cite{MR2275175} for a recent review on numerical integrators for highly oscillatory Hamiltonian systems. \emph{Symplectic integrators are natural for the integration of Hamiltonian systems since they
reproduce at the discrete level an important geometric property of the exact flow} \cite{LeBris:07}. For symplectic  integrators primarily for (but not limited to) stiff quadratic potentials, we refer to the Impulse Method, the Mollified Impulse Method, and their variations \cite{Grubmuller:91,Tuckerman:92,Skeel:99,Sanz-Serna:08}, which require an explicit form of the flow map of stiff process. In the context of variational integrators, by defining a discrete Lagrangian with an explicit trapezoidal approximation of the soft potential and a midpoint approximation for the fast potential, a symplectic (IMEX---IMplicit--EXplicit) scheme for stiff Hamiltonian systems has been proposed in \cite{Stern:09}. The resulting scheme is explicit for quadratic potentials and implicit for non quadratic stiff potentials.  We also refer to Le Bris and Legoll's (Hamilton-Jacobi derived) homogenization method \cite{LeBris:07}. Asynchronous Variational
Integrators \cite{LeMaOrWe2003}  provide a way to derive conservative symplectic integrators for PDEs where the solution advances non-uniformly in time; however, stiff potentials require a fine time step discretization over the whole time evolution.  In addition, multiple time-step methods \cite{MTS78} evaluate forces to different extends of accuracies by approximating less important forces via Taylor expansions, but it has issues on long time behavior, stability and accuracy, as described in Section 5 of \cite{MR1428715}.   Fixman froze the fastest bond oscillations in polymers to remove stiffness by adding a log term resemblant of entropy-based free energy to compensate \cite{Fixman:74}. This approach is successful in studying statistics of the system, but does not always reconstruct the correct dynamics \cite{Reich2000210, PeSkYa85, BoSc95}.

Several approaches to the homogenization of Hamiltonian systems (in analogy with classical homogenization \cite{BeLiPa78, JiKoOl91}) have been proposed. We refer to $\mathcal{M}$-convergence introduced in \cite{MR1490094, MR1436164}, to the two-scale expansion of  solutions of the Hamilton-Jacobi form of Newton's equations with stiff quadratic potentials \cite{LeBris:07} and to PDE methods in weak KAM theory \cite{MR2026176}.
We also refer to \cite{CaCha:09}, \cite{Iserles02} and \cite{MR810620}.

Obtaining explicit symplectic integrators for Hamiltonian systems with  non-quadratic stiff potentials is known to be an important and nontrivial problem.
By using Verlet/leap-frog macro-solvers, methods that are symplectic on slow variables (when those variables can be identified) have been proposed in the framework of HMM (the Heterogeneous Multiscale Method) in \cite{MR2161717, CaSer08}.
A ``reversible averaging'' method has been proposed in \cite{MR1843642} for mechanical systems with separated fast and slow variables.
More recently, a reversible multiscale integration method for mechanical systems was proposed in \cite{Ariel:09} in the context of HMM.
By tracking slow variables, \cite{Ariel:09} enforces reversibility in all variables as an optimization constraint at each coarse step when minimizing the distance between the effective drift obtained from the micro-solver (in the context of HMM) and the drift of the macro-solver.
 We are also refer to \cite{SerArTs09} for HMM symmetric methods for  mechanical systems with a stiff potentials of the form $\frac{1}{\epsilon}\sum_{j=1}^\nu g_j(q)^2$.

\subsection{FLAVORS for mechanical systems on manifolds}\label{subham}
Assume that we are given a first order accurate legacy integrator for \eref{ksdjjsdshkdjjksdhj} in which the parameter $1/\epsilon$ can be controlled, i.e. a
mapping $\Phi_h^{\alpha}$ acting on the phase space such that for $h\leq h_0 \min(1,\alpha^{-\frac{1}{2}})$
\begin{equation}\label{lsjfassddlfskjlkd}
\Big|\Phi_h^{\alpha}(q,p)-(q,p)-h \big(M^{-1}p,-V(q)-\alpha U(q)\big) \Big|\leq C h^2 (1+\alpha)
\end{equation}

Write $\Theta_\delta$, the FLAVOR discrete mapping approximating solutions of  \eref{ksdjjsdshkdjjksdhj} over time steps $\delta \gg \epsilon$, i.e.
\begin{equation}\label{jhjhgjhgjg87g87}
(q_{(n+1)\delta}, p_{(n+1)\delta}):=\Theta_\delta (q_{n\delta},p_{n\delta}) .
\end{equation}
FLAVOR can then be defined by
\begin{equation}\label{jhjhgjhgjg87g87A}
\Theta_\delta:=\Phi_{\delta -\tau}^{0}\circ \Phi_{\tau}^{\frac{1}{\epsilon}}
\end{equation}
 Theorem \ref{thmdssdr2sddsd} establishes the accuracy of this integrator under Conditions \ref{lsdsddcddsdeehx} and \ref{lsduisdsdsdeehx} provided that $\tau \ll\sqrt{\epsilon} \ll\delta$ and $\frac{\tau^2}{\epsilon}\ll\delta \ll \frac{\tau}{\sqrt{\epsilon}}$.

 \subsubsection{Structure preserving properties of FLAVORS}
We will now show that FLAVORS inherit the structure preserving properties of their legacy integrators.
\begin{Theorem}\label{jshgdsjhdggwuyege}
If for all $h,\epsilon>0$ $\Phi^\epsilon_h$ is symmetric under a group action,  then $\Theta_\delta$ is symmetric under the same group action.
\end{Theorem}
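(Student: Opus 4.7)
The plan is to unpack what ``symmetric under a group action'' means at the level of the one-step map and then exploit the fact that equivariance is preserved under composition. Specifically, symmetry under a group action means there is a group $G$ acting on the phase space $\R^d$ (or on $T^*\mathcal{M}$) by maps $\Psi_g$, $g\in G$, such that for every admissible $h$ and every value of the stiff parameter the integrator is $\Psi_g$-equivariant,
\begin{equation*}
\Psi_g \circ \Phi^{\alpha}_h = \Phi^{\alpha}_h \circ \Psi_g\quad\text{for all }g\in G.
\end{equation*}
This is the natural discrete analogue of ``group action that leaves the system invariant'' from the abstract. I would record this reformulation at the top of the proof as the working definition.

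Next, I would apply this to the two building blocks of $\Theta_\delta = \Phi^0_{\delta-\tau}\circ \Phi^{1/\epsilon}_\tau$: by hypothesis, the stiff flow $\Phi^{1/\epsilon}_\tau$ commutes with every $\Psi_g$, and so does $\Phi^0_{\delta-\tau}$ (the $\alpha=0$ case; if one insists that the hypothesis is only stated for $\epsilon>0$, I would justify this instance either by taking $\alpha\downarrow 0$ using continuity of $\alpha\mapsto \Phi^\alpha_h$ inherited from the legacy integrator, or simply by reading the hypothesis as including the non-stiff endpoint, which is standard since $\Phi^0_h$ is by construction a legacy integrator for $\dot u = G(u)$). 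The main computation is then just
\begin{equation*}
\Psi_g\circ\Theta_\delta
= \Psi_g\circ \Phi^0_{\delta-\tau}\circ \Phi^{1/\epsilon}_\tau
= \Phi^0_{\delta-\tau}\circ \Psi_g\circ \Phi^{1/\epsilon}_\tau
= \Phi^0_{\delta-\tau}\circ \Phi^{1/\epsilon}_\tau\circ \Psi_g
= \Theta_\delta\circ \Psi_g,
\end{equation*}
valid for every $g\in G$, which is exactly the required equivariance of $\Theta_\delta$.

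There is essentially no analytic obstacle here; the result is the elementary observation that the subset of $\Psi_g$-equivariant self-maps is closed under composition. The only subtlety worth flagging explicitly in the writeup is the $\alpha=0$ instance noted above, and, for Hamiltonian systems on a manifold, verifying that the $\Psi_g$ coming from the smooth $G$-action on $T^*\mathcal{M}$ is indeed the same object with respect to which the legacy integrator is assumed equivariant. Once those are settled, the proof is a two-line calculation.
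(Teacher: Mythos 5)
Your proposal is correct and follows essentially the same route as the paper, which disposes of this theorem (together with the symplecticity statement) in one line by observing that $\Theta_\delta$ is a composition of maps each symmetric (equivariant) under the group action, citing Chapter XIII.1.3 of Hairer--Lubich--Wanner. Your explicit handling of the $\alpha=0$ instance is a reasonable clarification of a point the paper leaves implicit, but it does not change the argument.
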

\begin{Theorem}\label{CorollarySymplectic}
If $\Phi^\alpha_h$ is symplectic on the co-tangent bundle $T^*\mathcal{M}$ of  a configuration manifold $\mathcal{M}$, then $\Theta_\delta$ defined by \eref{jhjhgjhgjg87g87A}  is symplectic on the co-tangent bundle $T^*\mathcal{M}$.
\end{Theorem}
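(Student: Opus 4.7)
The plan is to exploit the fact that FLAVORS are defined purely as a composition of two instances of the legacy integrator with different values of the stiff parameter and the step size. Since by hypothesis $\Phi^\alpha_h$ is a symplectomorphism on $T^*\mathcal{M}$ for every admissible pair $(h,\alpha)$, each of the two factors $\Phi^{\frac{1}{\epsilon}}_\tau$ and $\Phi^0_{\delta-\tau}$ appearing in the definition \eref{jhjhgjhgjg87g87A} is already a symplectomorphism on $T^*\mathcal{M}$, and one then only needs the standard fact that the class of symplectomorphisms is closed under composition.

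Concretely, I would first fix the canonical symplectic two-form $\omega$ on $T^*\mathcal{M}$ (in a Darboux chart, $\omega=dq\wedge dp$), and recall that a smooth map $\psi$ is symplectic iff $\psi^*\omega=\omega$. Applying the hypothesis with $\alpha=1/\epsilon$, $h=\tau$ and with $\alpha=0$, $h=\delta-\tau$ respectively gives
\begin{equation}
(\Phi^{\frac{1}{\epsilon}}_\tau)^*\omega=\omega,\qquad (\Phi^0_{\delta-\tau})^*\omega=\omega.
\end{equation}
Then, by functoriality of the pullback,
\begin{equation}
\Theta_\delta^*\omega=\bigl(\Phi^0_{\delta-\tau}\circ \Phi^{\frac{1}{\epsilon}}_\tau\bigr)^*\omega=(\Phi^{\frac{1}{\epsilon}}_\tau)^*(\Phi^0_{\delta-\tau})^*\omega=(\Phi^{\frac{1}{\epsilon}}_\tau)^*\omega=\omega,
\end{equation}
which is exactly the symplecticity of $\Theta_\delta$ on $T^*\mathcal{M}$.

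There is essentially no hard part here; the statement is a corollary of the algebraic structure of the symplectic group under composition, and the only thing worth being explicit about is that the hypothesis on $\Phi^\alpha_h$ is assumed to hold for \emph{every} $\alpha\geq 0$ (in particular for $\alpha=1/\epsilon$ and $\alpha=0$), so that both building blocks of $\Theta_\delta$ qualify. The same line of reasoning, applied to an arbitrary group action under which $\Phi^\alpha_h$ is equivariant, yields Theorem \ref{jshgdsjhdggwuyege} as well, since equivariance is likewise stable under composition.
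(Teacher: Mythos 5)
Your proposal is correct and follows essentially the same route as the paper, which disposes of Theorem \ref{CorollarySymplectic} (and Theorem \ref{jshgdsjhdggwuyege}) by observing that $\Theta_\delta$ is a composition of symplectic (respectively equivariant) maps, citing Chapter XIII.1.3 of Hairer et al.; your pullback computation $\Theta_\delta^*\omega=(\Phi^{\frac{1}{\epsilon}}_\tau)^*(\Phi^0_{\delta-\tau})^*\omega=\omega$ simply makes that one-line argument explicit. Nothing is missing.
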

Theorem \ref{jshgdsjhdggwuyege} and Theorem \ref{CorollarySymplectic} can be
resolved by noting that ``the overall method is symplectic - as a composition of symplectic transformations, and it is symmetric - as a
symmetric composition of symmetric steps'' (see Chapter XIII.1.3 of \cite{Hairer:04}).

Write
\begin{equation}
\Phi^*_h:= \big(\Phi_{-h}\big)^{-1}
\end{equation}
Let us recall the following definition corresponding to definition 1.4 of the Chapter V of \cite{Hairer:04}
\begin{Definition}
A numerical one-step method $\Phi_h$ is called  time-reversible if it satisfies $\Phi^*_h=\Phi_h$.
\end{Definition}

The following theorem, whose proof is straightforward, shows how to derive a ``symplectic and symmetric and time-reversible'' FLAVOR from a symplectic legacy integrator and its adjoint.
Since this derivation applies to manifolds, it also leads to structure-preserving FLAVORS for constrained mechanical systems.
\begin{Theorem}\label{sjhdsjdjhsgdjh}
If $\Phi^\alpha_h$ is symplectic on the co-tangent bundle $T^*\mathcal{M}$ of  a configuration manifold $\mathcal{M}$, then
\begin{equation}\label{kjhdwsjhdg}
\Theta_\delta:=\Phi_{\frac{\tau}{2}}^{\frac{1}{\epsilon},*}\circ \Phi_{\frac{\delta -\tau}{2}}^{0,*}\circ \Phi_{\frac{\delta -\tau}{2}}^{0}\circ \Phi_{\frac{\tau}{2}}^{\frac{1}{\epsilon}}
\end{equation}
 is symplectic and time-reversible on the co-tangent bundle $T^*\mathcal{M}$.
\end{Theorem}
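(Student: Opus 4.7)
The key observation is that $\Theta_\delta$ in \eref{kjhdwsjhdg} has been built by the canonical symmetrization construction. Setting
\begin{equation*}
\Omega_\delta \;:=\; \Phi_{(\delta-\tau)/2}^{0}\,\circ\, \Phi_{\tau/2}^{\frac{1}{\epsilon}},
\end{equation*}
and using the composition rule for adjoints of one-step methods recalled below, the four-fold composition in \eref{kjhdwsjhdg} factors as $\Theta_\delta = \Omega_\delta^{*}\circ \Omega_\delta$. Both conclusions then drop out of abstract manipulations of adjoints and compositions; essentially no analytic work beyond the definitions is required.

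For symplecticity, I would first invoke the standard fact that inverses of symplectic diffeomorphisms are symplectic; applied to $\Phi^{\alpha,*}_h = (\Phi^{\alpha}_{-h})^{-1}$, this shows that the adjoints $\Phi_{\tau/2}^{\frac{1}{\epsilon},*}$ and $\Phi_{(\delta-\tau)/2}^{0,*}$ preserve the canonical symplectic form on $T^{*}\mathcal{M}$. Since $\Theta_\delta$ is then a composition of four symplectic maps on $T^{*}\mathcal{M}$, it is symplectic.

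For time-reversibility, I would use two elementary identities for one-step methods: the composition rule $(A\circ B)^{*} = B^{*}\circ A^{*}$ (immediate from $(A_{-h}\circ B_{-h})^{-1} = B_{-h}^{-1}\circ A_{-h}^{-1}$) and the involution property $(\Phi^{*})^{*} = \Phi$ (immediate from $((\Phi^{*})_{-h})^{-1}=((\Phi_h)^{-1})^{-1}=\Phi_h$). Applied to the factorization $\Theta_\delta = \Omega_\delta^{*}\circ \Omega_\delta$, they give
\begin{equation*}
\Theta_\delta^{*} \;=\; \bigl(\Omega_\delta^{*}\circ \Omega_\delta\bigr)^{*} \;=\; \Omega_\delta^{*}\circ (\Omega_\delta^{*})^{*} \;=\; \Omega_\delta^{*}\circ \Omega_\delta \;=\; \Theta_\delta.
\end{equation*}

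The one genuine subtlety — and the main thing I would want to pin down carefully — is what ``the adjoint of $\Theta_\delta$'' means, since $\Theta_\delta$ depends on two time-like parameters $\delta$ and $\tau$ (with $\epsilon$ held fixed). The convention that makes the argument above go through, and is the physically natural one, is that time reversal negates $\delta$ and $\tau$ simultaneously, so the ordered substeps $(\tau/2,(\delta-\tau)/2)$ are each reversed; equivalently, one regards $\tau$ as slaved to $\delta$ by a fixed ratio $\tau/\delta$, so that there is only one time parameter being flipped. Once this convention is fixed, the proof reduces to the three lines above, and the construction moreover adapts verbatim to the constrained case on $T^{*}\mathcal{M}$ since each factor acts on the same cotangent bundle.
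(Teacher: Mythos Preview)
Your proposal is correct and is precisely the standard argument the paper is gesturing at when it calls the proof ``straightforward'': the paper gives no proof beyond that remark (and, for the closely related Theorems \ref{jshgdsjhdggwuyege} and \ref{CorollarySymplectic}, cites Chapter~XIII.1.3 of \cite{Hairer:04} for the fact that a composition of symplectic maps is symplectic and a symmetric composition is time-reversible). Your factorization $\Theta_\delta=\Omega_\delta^{*}\circ\Omega_\delta$ together with $(A\circ B)^{*}=B^{*}\circ A^{*}$ and $(\Phi^{*})^{*}=\Phi$ is exactly the mechanism behind that citation, and your explicit flagging of the convention that time reversal negates $\delta$ and $\tau$ together is a useful clarification that the paper leaves implicit.
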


\begin{Remark}
Observe  that (except for the first and last steps) iterating $\Theta_\delta$ defined by \eref{kjhdwsjhdg} is equivalent to
iterating
\begin{equation}\label{kjhdwsdehdg}
\Theta_\delta:= \Phi_{\frac{\delta -\tau}{2}}^{0,*}\circ \Phi_{\frac{\delta -\tau}{2}}^{0}\circ \Phi_{\frac{\tau}{2}}^{\frac{1}{\epsilon}}\circ \Phi_{\frac{\tau}{2}}^{\frac{1}{\epsilon},*}
\end{equation}
It follows that a symplectic, symmetric and reversible FLAVOR can be obtained in a nonintrusive way from a St\"{o}rmer/Verlet integrator for \eref{ksdjjsdshkdjjksdhj} \cite{MR2249159, MR1227985, Verlet1967}.
\end{Remark}

\subsubsection{An example of a symplectic FLAVOR} If the phase space is $\R^d\times \R^d$, then an example of symplectic FLAVOR is obtained from Theorem \ref{CorollarySymplectic} by choosing $\Phi^\alpha_h$ to be the symplectic Euler (also known as Variational Euler or VE for short) integrator defined by
\begin{equation}\label{lsjfaijissdsdedlfskjlkd}
\Phi_h^{\alpha}(q,p)= \begin{pmatrix}q\\p\end{pmatrix} +h \begin{pmatrix}M^{-1}\Big(p-h\big(V(q)+\alpha U(q)\big)\Big)\\-V(q)-\alpha U(q)\end{pmatrix}
\end{equation}
and letting $\Theta_\delta$ be defined by \eref{jhjhgjhgjg87g87A}.

\subsubsection{An example of a symplectic and time-reversible FLAVOR}
If the phase space is the Euclidean space $\R^d\times \R^d$, then an example of symplectic and time-reversible FLAVOR is obtained by
letting $\Theta_\delta$ be defined by Equation \eref{kjhdwsjhdg} of Theorem \ref{sjhdsjdjhsgdjh} by choosing $\Phi^\alpha_h$ to be the symplectic Euler integrator defined by \eref{lsjfaijissdsdedlfskjlkd} and
\begin{equation}\label{lsjfaijissddlfskjlkd}
\Phi_h^{\alpha,*}(q,p)= \begin{pmatrix}q\\p\end{pmatrix} +h \begin{pmatrix}M^{-1}p\\ -V(q+h M^{-1}p)-\alpha U(q+h M^{-1}p)\end{pmatrix}
\end{equation}

\subsubsection{An artificial FLAVOR}\label{gjsdgjhdhjhwg}
There is not a unique way of averaging the flows of \eref{kdshdkjhsjkdhjdhhhc}. We present below an alternative method based on the freezing and unfreezing of degrees of freedom associated with fast potentials. We have called this method ``artificial'' because it is intrusive. With this method, the discrete flow approximating solutions of \eref{ksdjjsdshkdjjksdhj} is given by \eref{jhjhgjhgjg87g87} with
\begin{equation}\label{jhjhgjhgjg87sdedg87A}
\Theta_\delta:=\theta^{tr}_{\delta-\tau} \circ \theta^\epsilon_{\tau} \circ \theta^V_{\delta}
\end{equation}
where $\theta^V_\delta$ is a symplectic map corresponding to the flow of $H^{slow}(q,p):=V(q)$, approximating the effects of the soft potential on momentum over the mesoscopic time step $\delta$ and defined by
\begin{equation}
    \theta^V_\delta \big(q,p\big) = \big(q,p-\delta \nabla V(q)\big).
\end{equation}
$\theta^\epsilon_\tau$ is a symplectic map approximating the flow of $H^{fast}(q,p):=\frac{1}{2}p^T M^{-1} p+\frac{1}{\epsilon}U(q)$ over a  microscopic time step $\tau$:
\begin{equation}
    \theta^\epsilon_\tau\big(q,p\big) = \big(q+\tau M^{-1}p,p-\frac{\tau}{\epsilon} \nabla U(q+tM^{-1}p)\big)
\end{equation}
$\theta^{tr}_{\delta-\tau}$ is a map approximating the flow of the Hamiltonian $H^{free}(q,p):=\frac{1}{2}p^T M^{-1} p$ under holonomic constraints  imposing the freezing of stiff variables.  Velocities along the direction of constraints have to be stored and set to be 0 before the constrained dynamics, i.e., frozen, and the stored velocities should be restored after the constrained dynamics, i.e., unfrozen; geometrically speaking, one projects to the constrained sub-symplectic manifold, runs the constrained dynamics, and lifts back to the original full space. Oftentimes, the exact solution to the constrained dynamics can be found (examples given in Subsections \ref{sjhgdkjhgsdjh}, \ref{sjhgdkjhgsdjh2}, \ref{kjkjdhkdjhhkdh}, \ref{FPUsec} and \ref{Prisec}).

When the exact solution to the constrained dynamics cannot be easily found, one may want to employ integrators for constrained dynamics such as SHAKE \cite{SHAKE} or RATTLE \cite{RATTLE} instead. This has to be done with caution, because symplecticity of the translational flow may be lost. The composition of projection onto the constrained manifold (freezing), evolution on the constrained manifold, and lifting from it to the unconstrained space (unfreezing) preserves symplecticity in the unconstrained space only if the evolution on the constrained manifold preserves the inherited symplectic form. A numerical integration preserves the discrete symplectic form on the constrained manifold, but not necessarily the projected continuous symplectic form.

\begin{Remark}
    This artificial FLAVOR is locally a perturbation of nonintrusive FLAVORS.  By splitting theory \cite{McQu02,Hairer:04},
    \begin{equation}
        \theta^{tr}_{\delta-\tau} \circ \theta^\epsilon_{\tau} \circ \theta^V_{\delta} \approx
        \theta^{tr}_{\delta-\tau} \circ \theta^V_{\delta-\tau} \circ \theta^\epsilon_{\tau} \circ \theta^V_{\tau} \approx
        \theta^{tr}_{\delta-\tau} \circ \theta^V_{\delta-\tau} \circ \Phi^{\frac{1}{\epsilon}}_\tau
    \end{equation}
    whereas $\Phi^0_{\delta-\tau} \circ \Phi^{\frac{1}{\epsilon}}_\tau \approx \theta^{free}_{\delta-\tau} \circ \theta^V_{\delta-\tau} \circ \Phi^{\frac{1}{\epsilon}}_\tau$, where $\theta^{free}$ is the flow of $H^{free}(q,p)$ under no constraint. The only difference is that constraints are treated in $\theta^{tr}$ but not in $\theta^{free}$.
\end{Remark}

\begin{Remark}
    This artificial FLAVOR can be formally regarded as $\Phi_{\delta-\tau}^{\infty} \circ \Phi_{\tau}^{\frac{1}{\epsilon}}$. In contrast natural FLAVOR is $\Phi_{\delta-\tau}^{0} \circ \Phi_{\tau}^{\frac{1}{\epsilon}}$.
\end{Remark}

The advantage of this artificial FLAVOR lies in the fact that only
$\tau \ll\sqrt{\epsilon} \ll\delta$ and $\delta \ll \frac{\tau}{\sqrt{\epsilon}}$ are required for its accuracy (and not $\frac{\tau^2}{\epsilon}\ll\delta$).  We also observe that, in general, artifical FLAVOR overperforms nonintrusive FLAVOR in FPU long time ($\mathcal{O}(\omega^2)$) simulations (we refer to Subsection \ref{FPUsec}).

\subsection{Variational derivation of FLAVORS}
FLAVORS based on variational legacy integrators \cite{MaWe:01} are variational too. Recall that discrete Lagrangian $L_d$ is an approximation of the integral of the continuous Lagrangian over one time step, and Discrete Euler-Lagrangian equation (DEL) corresponds to the critical point of the discrete action, which is a sum of the approximated integrals. The following diagram commutes:
\[
\xymatrix{
Singlescale~L_d
\ar[rrr]^{FLAVORization}
\ar[d]^{action~principle}
&&&
Multiscale~L_d
\ar[d]^{action~principle}
\\
Singlescale~DEL
\ar[rrr]^{FLAVORization}
&&&
Multiscale~DEL
}
\]

For example, recall Variational Euler (i.e. symplectic Euler) for system \eref{kdshdkjhsjkdhjdhhhc} with time step $h$
\begin{equation}
    \begin{cases}
        p_{k+1} &= p_k-h [\nabla V(q_k)+\frac{1}{\epsilon}\nabla U(q_k)] \\
        q_{k+1} &= q_k+h p_{k+1}
    \end{cases}
\end{equation}
can be obtained by applying variational principle to the following discrete Lagrangian
\begin{equation}
    {L_d}_h^{1/\epsilon}(q_k,q_{k+1})=h\left[ \frac{1}{2} \left(\frac{q_{k+1}-q_k}{h}\right)^2 -     \left(V(q_k)+\frac{1}{\epsilon}U(q_k)\right) \right] .
\end{equation}
Meanwhile, FLAVORized Variational Euler with smallstep $\tau$ and mesostep $\delta$
\begin{equation}
    \begin{cases}
        p'_k &=p_k-\tau [\nabla V(q_k)+\frac{1}{\epsilon}\nabla U(q_k)] \\
        q'_k &=q_k+\tau p'_k \\
        p_{k+1} &= p'_k-(\delta-\tau) \nabla V(q'_k) \\
        q_{k+1} &= q'_k+(\delta-\tau) p_{k+1}
    \end{cases}
\end{equation}
can be obtained by applying variational principle to the FLAVORized discrete Lagrangian
\begin{align}
    & {L_d}_\delta(q_k,q'_k,q_{k+1})= {L_d}_\tau^{1/\epsilon}(q_k,q'_k) + {L_d}_{\delta-\tau}^{0}(q'_k,q_{k+1}) \nonumber\\
    & \quad =\tau \left[ \frac{1}{2} \left(\frac{q'_k-q_k}{\tau}\right)^2 -     \left(V(q_k)+\frac{1}{\epsilon}U(q_k)\right) \right] + (\delta-\tau) \left[ \frac{1}{2} \left(\frac{q_{k+1}-q'_k}{\delta-\tau}\right)^2 - V(q'_k) \right]
\end{align}

FLAVORizations of other variational integrators such as Velocity Verlet follow similarly.

\section{SDEs}\label{sjhdjhsgdjhwghe}

\emph{Asymptotic problems for stochastic differential equations arose and were solved simultaneously with the very beginnings of the theory of such equations} \cite{MR1020057}. Here, we refer to the early work of Gikhman \cite{MR0073876}, Krylov \cite{KrBo1937, Kry39}, Bogolyubov \cite{MR0043001} and Papanicolaou-Kohler \cite{PaKo74}. We refer in particular to Skorokhod's detailed monograph \cite{MR1020057}.
As for ODEs,  effective equations for stiff SDEs can be obtained by averaging the instantaneous coefficients (drift and the diffusivity matrix squared) with respect to the fast components; we refer to Chapter II, Section 3 of \cite{MR1020057} for a detailed analysis including error bounds.
Numerical methods such as HMM \cite{MR2165382} and equation-free methods \cite{MR2174834} have been extended to SDEs based on this averaging principle. \emph{Implicit methods in general fail to capture the effective dynamics of the slow time scale because they cannot correctly capture non-Dirac invariant distributions} \cite{LiAbE:08} (we refer to non-Dirac invariant distribution as a measure of probability on the configuration space whose support is not limited to a single point). Another idea is to treat fast variables by conditioning; here, we refer to optimal prediction \cite{MR1915310, MR1750741, MR1619894} that has also been used for model reduction. We also refer to \cite{MR1447964, MR1704287, MR1834774, MR1835724, MR1876404,LiAbE:08, MR2385896}.

Since FLAVORS are obtained via flow averaging, they have a natural extension to SDEs developed in this section. As for ODEs, FLAVORS are directly applied to SDEs with mixed (hidden) slow and fast variables without prior (analytical or numerical) identification of slow variables. Furthermore, they can be implemented using a pre-existing scheme by turning on and off the stiff parameters.

For the sake of clarity, we will start the description of with the following SDE on $\R^d$:
\begin{equation}\label{fullsystemS}
d u^\epsilon_t=\big(G(u^\epsilon_t)+\frac{1}{\epsilon}F(u^\epsilon_t)\big)\,dt + \big(H(u^\epsilon_t)+\frac{1}{\sqrt{\epsilon}}K(u^\epsilon_t)\big)\,dW_t,\quad u^\epsilon_0=u_0
\end{equation}
where $(W_t)_{t\geq 0}$ is a $d$-dimensional Brownian Motion; $F$ and $G$ are  vector fields on $\R^d$; $H$ and $K$ are$d\times d$ matrix fields on $\R^d$. In Subsection \ref{natfla2}, we will consider the more general form \eref{jkhgsdejgdshhh}.

\begin{Condition}\label{lkjhaswkehljhsdjjkehx}
Assume that:
\begin{enumerate}
\item   $F, G,  H$ and $K$ are uniformly bounded and Lipschitz continuous.

\item There exists a diffeomorphism $\eta:=(\eta^x,\eta^y)$,
from $\R^d$ onto $\R^{d-p}\times \R^p$, independent of $\epsilon$, with uniformly bounded $C^1$, $C^2$ and $C^3$ derivatives, such that the process $(x^\epsilon_t,y^\epsilon_t)=(\eta^x(u^\epsilon_t),\eta^y(u^\epsilon_t))$ satisfies the SDE
\begin{equation}\label{kfgfgdsawssdeedejhd}
\begin{cases}
d x^\epsilon=g(x^\epsilon,y^\epsilon)\,dt + \sigma(x^\epsilon,y^\epsilon) dW_t ,\quad x^\epsilon_0=x_0\\
d y^\epsilon=\frac{1}{\epsilon}f(x^\epsilon,y^\epsilon)\,dt + \frac{1}{\sqrt{\epsilon}}Q(x^\epsilon,y^\epsilon) dW_t ,\quad y^\epsilon_0=y_0
\end{cases}
\end{equation}
where $g$ is $d-p$ dimensional vector field; $f$ a $p$-dimensional vector field; $\sigma$  is a $(d-p)\times d$-dimensional matrix field; $Q$ a $p\times d$-dimensional matrix field and $W_t$ a $d$-dimensional Brownian Motion.
\item  Let $Y_t$ be the solution to
\begin{equation}
dY_t=f(x_0,Y_t)\,dt+Q(x_0,Y_t)\, dW_t\quad \quad Y_0=y_0
\end{equation}
there exists a family of probability measures $\mu(x,dy)$ on $\R^p$ indexed by $x\in\R^{d-p}$ and a positive function $T \mapsto E(T)$ such that $\lim_{T\rightarrow \infty}E(T)=0$
 and such that for all $x_0,y_0,T$ and $\phi$ with uniformly bounded $C^r$ derivatives for $r\leq 3$,
    \begin{equation}\label{kshlkshe}
    \Big|\frac{1}{T}\int_0^T \E\big[\phi(Y_s)\big]-\int \phi(y)\mu(x_0,dy)\Big|\leq  \chi\big(\|(x_0,y_0)\|\big) E(T) \max_{r\leq 3}\|\phi\|_{C^r}
    \end{equation}
where $r\mapsto \chi(r) $ is  bounded on compact sets.
\item For all $u_0$, $T>0$,  $\sup_{0\leq t\leq T} \E\Big[\chi\big(\|u_{t}^\epsilon\|\big)\Big]$ is uniformly bounded in $\epsilon$.
\end{enumerate}
\end{Condition}
\begin{Remark}
As in the proof of Theorem \ref{thm01} the uniform regularity of $F$, $G$, $H$ and $K$ can be relaxed to local regularity by adding a control on the rate of escape of the process towards infinity. To simplify the presentation, we will use the global uniform regularity.
\end{Remark}

We will now extend the definition of two-scale flow convergence introduced in Subsection \ref{saskjagsjhgs} to stochastic processes.

\subsection{Two-scale flow convergence for SDEs}\label{subsde}
Let $\big(\xi^\epsilon(t,\omega)\big)_{t\in \R^+,\omega \in \Omega}$ be a sequence of stochastic processes on $\R^d$ (progressively measurable mappings from $\R^+ \times \Omega$ to $\R^d$) indexed by $\epsilon>0$. Let $(X_t)_{t\in \R^+}$ be a (progressively measurable) stochastic process on $\R^{d-p}$ ($p\geq 0$).
Let $x \mapsto \nu(x,dz)$ be a function from $\R^{d-p}$ into the space of probability measures on $\R^d$.

\begin{Definition}
We say that the process $\xi^\epsilon_t$ F-converges to $\nu(X_t,dz)$ as $\epsilon \downarrow 0$ and write $\xi^\epsilon_t \xrightarrow [\epsilon \rightarrow 0]{F} \nu(X_t,dz)$ if and only if for all function $\varphi$ bounded and uniformly Lipshitz-continuous on $\R^d$, and for all $t>0$
\begin{equation}
\lim_{h\rightarrow 0} \lim_{\epsilon \rightarrow 0} \frac{1}{h}\int_{t}^{t+h}\E\big[\varphi(\xi_s^\epsilon)\big]\,ds=
\E\big[\int_{\R^d} \varphi(z)\nu(X_t,dz)\big]
\end{equation}
\end{Definition}

\subsection{Non intrusive FLAVORS for SDEs}
Let $\omega$ be a random sample from a probability space $(\Omega,\mathcal{F},\P)$ and $\Phi^\alpha_h(,\omega)$
 a random mapping from $\R^d$ onto $\R^d$ approximating the flow of \eref{fullsystemS} for  $\alpha=1/\epsilon$.
If the parameter $\alpha$ can be controlled, then $\Phi^\alpha_h$ can be used as a black box  for accelerating the computation of solutions of \eref{fullsystemS} without prior identification of slow variables. Indeed, assume that there exists a constant $h_0>0$ and a normal random vector $\xi(\omega)$ such that for $h\leq h_0 \min(\frac{1}{\alpha},1)$
\begin{equation}\label{hgfjdshgfadsjhiiigdfhf}
\Bigg(\E\Big[\big|\Phi^{\alpha}_{h}(u,\omega)-u-h G(u)-\alpha h F(u)-\sqrt{h}H(u)\xi(\omega)-\sqrt{\alpha h}K(u)\xi(\omega)\big|^2\Big]\Bigg)^\frac{1}{2}\leq C h^\frac{3}{2}(1+\alpha)^\frac{3}{2}
\end{equation}
then  FLAVOR can be defined as the algorithm simulating the stochastic process
\begin{equation}\label{ksjahussahshdskdcrrfsdaue0}
\begin{cases}
\bar{u}_{0}=u_0\\
\bar{u}_{(k+1)\delta}= \Phi^0_{\delta-\tau}(.,\omega_k') \circ \Phi^\frac{1}{\epsilon}_{\tau}(\bar{u}_{k\delta},\omega_k)\\
\bar{u}_{t}=\bar{u}_{k\delta}\quad \text{for}\quad k\delta \leq t <(k+1)\delta
\end{cases} .
\end{equation}
where  $\omega_k,\omega_k'$ are i.i.d. samples from the probability space $(\Omega,\mathcal{F},\P)$, $\delta \leq h_0$ and $\tau \in (0,\delta)$ such that $\tau \leq \tau_0 \epsilon$.
Theorem \ref{thm04} establishes the asymptotic accuracy of FLAVOR for $\tau \ll \epsilon \ll\delta$ and
\begin{equation}
\big(\frac{\tau}{\epsilon}\big)^\frac{3}{2}\ll\delta \ll \frac{\tau}{\epsilon}.
\end{equation}

\subsection{Convergence theorem}

\begin{Theorem}\label{thm04}
Let $u^\epsilon$ be the solution to \eref{fullsystemS} and
$\bar{u}_t$ defined by \eref{ksjahussahshdskdcrrfsdaue0}. Assume that equation \eref{hgfjdshgfadsjhiiigdfhf} and Conditions \ref{lkjhaswkehljhsdjjkehx} are satisfied,
 then
 \begin{itemize}
\item $u^\epsilon_t$ $F$-converges towards $\eta^{-1}*\big(\delta_{X_t}\otimes \mu(X_t,dy)\big)$ as $\epsilon \downarrow 0$ where $X_t$ is the solution to
\begin{equation}\label{sklghgjhdsdsdkqhdjkh}
dX_t =\int g(X_t,y)\,\mu(X_t,dy)\,dt + \bar{\sigma}(X_t)\,dB_t \quad \quad X_0=x_0
\end{equation}
where $\bar{\sigma}$ is a $(d-p)\times (d-p)$ matrix field defined by
\begin{equation}\label{sklghgsdjhdkhdjkhqsw}
\bar{\sigma}\bar{\sigma}^T= \int \sigma \sigma^T(x,y)\,\mu(x,dy)
\end{equation}
and $B_t$ a $(d-p)$-dimensional Brownian Motion.
\item $\bar{u}_t$ $F$-converges towards $\eta^{-1}*\big(\delta_{X_t}\otimes \mu(X_t,dy)\big)$ as
 $\epsilon \downarrow 0$, $\tau \leq \delta$, $\frac{\tau}{\epsilon} \downarrow 0$, $\frac{\delta \epsilon }{\tau} \downarrow 0$ and $\big(\frac{\tau}{\epsilon}\big)^\frac{3}{2}\frac{1}{\delta} \downarrow 0$.
\end{itemize}
\end{Theorem}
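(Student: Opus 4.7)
The plan is to follow the structure of the ODE argument for Theorem \ref{thm01} (see Subsection \ref{subap1}) with $L^2$ bounds replacing pointwise estimates and It\^o calculus handling the pullback through the nonlinear diffeomorphism $\eta$.

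For the first bullet, the $F$-convergence of $u^\epsilon_t$ reduces, after pushing through $\eta$, to a classical weak averaging theorem for slow-fast SDEs applied to the system \eref{kfgfgdsawssdeedejhd}. Under Condition \ref{lkjhaswkehljhsdjjkehx} (Lipschitz regularity, local ergodicity with measure $\mu$, uniform moments), Skorokhod's result \cite{MR1020057} (Chapter II, Section 3) gives weak convergence of $x^\epsilon$ to the solution $X_t$ of \eref{sklghgjhdsdsdkqhdjkh} together with convergence of short-time $\phi$-averages of $y^\epsilon$ to $\int \phi\,d\mu(X_t,\cdot)$. This is exactly the $F$-convergence statement after push-forward by $\eta^{-1}$.

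For the second bullet, I would rewrite the FLAVOR iteration in $(x,y)$-coordinates: with $\Psi^\alpha_h := \eta \circ \Phi^\alpha_h \circ \eta^{-1}$, the iteration becomes $(\bar{x}_{(k+1)\delta}, \bar{y}_{(k+1)\delta}) = \Psi^0_{\delta-\tau}(\cdot,\omega_k') \circ \Psi^{1/\epsilon}_\tau(\cdot,\omega_k)(\bar{x}_{k\delta},\bar{y}_{k\delta})$. Taylor-expanding $\eta$ to third order (using the $C^3$ bound in Condition \ref{lkjhaswkehljhsdjjkehx}(2)) and invoking the legacy bound \eref{hgfjdshgfadsjhiiigdfhf}, a one-step expansion yields
\[
\bar{x}_{k\delta+\tau}-\bar{x}_{k\delta} = \tau\, g(\bar{x}_{k\delta},\bar{y}_{k\delta}) + \sqrt{\tau}\,\sigma(\bar{x}_{k\delta},\bar{y}_{k\delta})\,\xi_k + R^x_{\tau,\epsilon},
\]
and analogous expressions for the $\bar{y}$ update and the subsequent $\Psi^0_{\delta-\tau}$ step. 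The second-order Hessian contributions of $\eta$ acting on $\E[\text{noise}\otimes\text{noise}]$ reproduce the It\^o corrections already baked into $g,f,\sigma,Q$, while the strong remainder $R^x_{\tau,\epsilon}$ has $L^2$-size $O((\tau/\epsilon)^{3/2})$ per invocation, coming from both \eref{hgfjdshgfadsjhiiigdfhf} and the cubic Taylor remainder applied to the fast noise increment $\sqrt{\tau/\epsilon}\,K\xi$. Two quantitative estimates then close the argument: (i) error accumulation — over a macroscopic window $h$ containing $h/\delta$ mesosteps, the $L^2$ error on slow variables accumulates like $(h/\delta)(\tau/\epsilon)^{3/2}$, vanishing under $(\tau/\epsilon)^{3/2}/\delta \downarrow 0$; (ii) fast mixing — the fast internal clock advances by $(h/\delta)(\tau/\epsilon)$, which diverges under $\delta\epsilon/\tau \downarrow 0$, so Condition \ref{lkjhaswkehljhsdjjkehx}(3) yields that time-averages of $\phi(\bar{y})$ over the window converge to $\int \phi\,d\mu(\bar{x},\cdot)$. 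Applied to $\phi=g(x,\cdot)$ and $\phi=\sigma\sigma^T(x,\cdot)$, this reproduces the averaged drift and diffusion $\bar{\sigma}\bar{\sigma}^T=\int \sigma\sigma^T\,d\mu$. A discrete Gronwall argument on $\E\|\bar{x}_{n\delta}-X_{n\delta}\|^2$ then gives mean-square convergence of $\bar{x}$ to $X_t$, and $F$-convergence of $\bar{u}_t$ follows by push-forward through $\eta^{-1}$.

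The main obstacle is establishing the one-step expansion and the mixing estimate simultaneously under a single set of scaling conditions: one must decouple the slow and fast dynamics cleanly enough that the Hessian-of-$\eta$ error on slow variables is controlled at rate $(\tau/\epsilon)^{3/2}$, while the fast dynamics is allowed to mix over $\tau/(\delta\epsilon) \gg 1$ microsteps per mesostep. This is the stochastic analog of the balance condition in the ODE case, made delicate by the fact that fluctuations of quadratic noise functionals (which are absent in the deterministic case) now enter the error analysis and must be handled via martingale $L^2$ bounds rather than pathwise Taylor remainders; this is also what forces the exponent $3/2$, rather than $2$, in the scaling on the left of \eref{eqlimits}.
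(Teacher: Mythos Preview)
Your one-step expansions and the scaling balance $(\tau/\epsilon)^{3/2}\ll\delta\ll\tau/\epsilon$ are correct and match the paper's Proposition \ref{kdlhkjhwjdzzd2d}. The gap is in the final step, where you invoke a discrete Gronwall argument on $\E\|\bar{x}_{n\delta}-X_{n\delta}\|^2$ to obtain mean-square convergence of $\bar{x}$ to $X$. This fails precisely because $\sigma$ depends on the fast variable $y$: the slow process $\bar{x}$ is driven by increments $\sqrt{\delta}\,\sigma(\bar{x},\bar{y})\,\xi$, and even under the best coupling to $\sqrt{\delta}\,\bar\sigma(X)\,\zeta$, the $L^2$ difference per step is of order $\sqrt{\delta}\,\|\sigma(\cdot,y)-\bar\sigma\|$, which accumulates over $t/\delta$ independent steps to an $O(\sqrt{t})$ error, not $o(1)$. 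The paper says this explicitly in the paragraph following Theorem \ref{thm04}: without assuming the slow noise is independent of fast variables, ``$x^\epsilon_t$ converges only weakly towards $X_t$ \ldots\ and techniques for strong convergence can not be used.'' Your averaging of $\sigma\sigma^T$ is the right object, but it only controls the \emph{generator}, not the pathwise increments.

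The paper's route works at the level of generators rather than trajectories. Lemma \ref{jshdjhsdjhgdjh} expands the one-step action on a test function $\varphi$, obtaining $\E[\varphi(\bar{x}_{(n+1)\delta})]-\E[\varphi(\bar{x}_{n\delta})]=\delta\,\E\big[g\cdot\nabla\varphi+\sigma\sigma^T:\Hess\varphi\big]+O\big(\delta^{3/2}+(\tau/\epsilon)^{3/2}\big)$; Lemma \ref{djskjdshgdjhs} then averages over $n$ steps and uses the fast ergodicity (Condition \ref{lkjhaswkehljhsdjjkehx}(3)) to replace $g$ and $\sigma\sigma^T$ by their $\mu$-averages, yielding convergence of the discrete generator to $L\varphi=\bar g\cdot\nabla\varphi+\bar\sigma\bar\sigma^T:\Hess\varphi$. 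Convergence in distribution of $\bar{x}$ to $X$ then follows from Skorokhod's theorem (Theorem 1, Chapter II of \cite{MR1020057}), which deduces weak convergence of processes from uniform convergence of their generators. Your ingredients (one-step expansion, mixing estimate, error accumulation bound) are exactly what is needed, but they must be fed into this generator-level argument rather than a pathwise Gronwall.
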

The proof of convergence of SDEs of  type \eref{kfgfgdsawssdeedejhd} is classical, and
a comprehensive monograph can be found in  Chapter II of \cite{MR1020057}.
A proof of (mean squared) convergence of  HMM  applied to \eref{kfgfgdsawssdeedejhd} (separated slow and fast variables) with $\sigma=0$ has been obtained in
\cite{MR2165382}. A  proof of (mean squared) convergence of the Equation-Free Method applied to \eref{kfgfgdsawssdeedejhd} with $\sigma\not=0$ but independent of fast variables has been obtained in \cite{GiKeKup06}.
Theorem \ref{thm04} proves the convergence in distribution  of FLAVOR applied to  SDE \eref{fullsystemS} with hidden slow and fast processes.
One of the main difficulties of the proof of Theorem \ref{thm04} lies in the fact that we are not assuming that the noise on (hidden) slow variables is null or independent from fast variables. Without this assumption, $x^\epsilon_t$ converges only weakly towards $X_t$, the convergence of $u^\epsilon$ can only be weak and techniques for strong convergence can not be used. The proof of Theorem \ref{thm04} relies on a powerful result by Skorokhod (Theorem 1 of Chapter II of \cite{MR1020057}) stating that the convergence in distribution of a sequence of stochastic processes is implied by the convergence of their generators.
We refer to Subsection \ref{iuiuyuuyuyuytuty} of the appendix for the detailed proof of Theorem \ref{thm04}.

\subsection{Natural FLAVORS}
As for ODEs, it is not necessary to use legacy integrators to obtain FLAVORS for SDEs.
More precisely, Theorem \ref{thm04} remains valid if FLAVORS are defined to be algorithms simulating the discrete process
\begin{equation}\label{ksjhdskdcrrfdaue0}
\begin{cases}
\bar{u}_{0}=u_0\\
\bar{u}_{(k+1)\delta}= \theta^G_{\delta-\tau}(.,\omega_k') \circ \theta^\epsilon_{\tau}(\bar{u}_{k\delta},\omega_k)\\
\bar{u}_{t}=\bar{u}_{k\delta}\quad \text{for}\quad k\delta \leq t <(k+1)\delta
\end{cases}
\end{equation}
where $\omega_k,\omega_k'$ are i.i.d. samples from the probability space $(\Omega,\mathcal{F},\P)$ and
$\theta_\tau^{\epsilon}$ and $\theta^{G}_{\delta-\tau}$ are two random mappings  from $\R^d$ onto $\R^d$   satisfying  following conditions \ref{lkjhewlkehlksdsedsdaeehx}. More precisely,
 $\theta^\epsilon_{\tau}(.,\omega)$  approximates in distribution the flow of \eref{fullsystemS} over time steps  $\tau\ll\epsilon$.
  $\theta^G_{h}(.,\omega)$  approximates in distribution the flow of
  \begin{equation}\label{skjdsjjhdxsdhd}
d v^\epsilon_t=G(v^\epsilon_t) \,dt+H(v^\epsilon_t)\,dW_t
\end{equation}
  over time steps  $h\ll1$.

\begin{Condition}\label{lkjhewlkehlksdsedsdaeehx}
Assume that:
\begin{enumerate}
\item There exists $h_0, C>0$ and a $d$-dimensional centered Gaussian  vector $\xi(\omega)$ with identity covariance matrix such that for $h\leq h_0$,
\begin{equation}\label{hgfjhgfadsjhgdfhf}
\Bigg(\E\Big[\big|\theta^{G}_{h}(u,\omega)-u-h G(u)-\sqrt{h}H(u)\xi(\omega)\big|^2\Big]\Bigg)^\frac{1}{2}\leq C h^\frac{3}{2}
\end{equation}

\item There exists $\tau_0, C>0$ and a $d$-dimensional centered Gaussian  vector $\xi(\omega)$ with identity covariance matrix such that for $\frac{\tau}{\epsilon}\leq \tau_0$,
\begin{equation}\label{hgfjdshgfadsjhgdfhf}
\Bigg(\E\Big[\big|\theta^{\epsilon}_{\tau}(u,\omega)-u-\tau G(u)-\frac{\tau}{\epsilon}F(u)-\sqrt{\tau}H(u)\xi(\omega)-\sqrt{\frac{\tau}{\epsilon}}K(u)\xi(\omega)\big|^2\Big]\Bigg)^\frac{1}{2}\leq C \big(\frac{\tau}{\epsilon}\big)^\frac{3}{2}
\end{equation}
\item For all $u_0$, $T>0$,  $\sup_{0\leq n\leq T/\delta} \E\Big[\chi\big(\|\bar{u}_{n\delta}\|\big)\Big]$ is uniformly bounded in $\epsilon$, $0< \delta \leq h_0$, $\tau \leq \min(\tau_0 \epsilon, \delta)$,
    where $\bar{u}$ is defined by \eref{ksjhdskdcrrfdaue0}.
\end{enumerate}
\end{Condition}

\subsection{FLAVORS for generic stiff SDEs}\label{natfla2}
FLAVORS for stochastic systems have a natural generalization to SDEs on $\R^d$ of the form
\begin{equation}\label{jkhgsdejgdshhh}
du^{\alpha,\epsilon}=F(u^{\alpha,\epsilon},\alpha,\epsilon)\,dt
+K(u^{\alpha,\epsilon},\alpha,\epsilon)\,dW_t
\end{equation}
where $(W_t)_{t\geq 0}$ is a $d$-dimensional Brownian Motion, $F$ and $K$ are Lipshitz continuous in $u$.

\begin{Condition}\label{lsdsddcddsdsdseeehx}
Assume that:
\begin{enumerate}
\item  $\gamma \mapsto F(u,\alpha,\gamma)$ and $\gamma\mapsto K(u,\alpha,\gamma)$
are  uniformly continuous in the neighborhood of $0$.
\item There exists a diffeomorphism $\eta:=(\eta^x,\eta^y)$,
from $\R^d$ onto $\R^{d-p}\times \R^p$, independent from $\epsilon, \alpha$, with uniformly bounded $C^1$, $C^2$ and $C^3$ derivatives, and such that the stochastic process $(x_t^{\alpha},y_t^{\alpha})=(\eta^x(u_t^{\alpha,0}),\eta^y(u_t^{\alpha,0}))$ satisfies for all $\alpha \geq 1$ the SDE
\begin{equation}\label{kfgfgdiuucdqcdciusesdejhdS}
dx^\alpha=g(x^\alpha,y^{\alpha})\,dt+\sigma(x^\alpha,y^{\alpha}) \,dW_t\quad x^\alpha_0=x_0\\
\end{equation}
where $g$ is $d-p$ dimensional vector field, $\sigma$  is a $(d-p)\times d$-dimensional matrix field,  $g$ and $\sigma$ are uniformly bounded and  Lipschitz continuous in $x$ and $y$.

\item There exists a family of probability measures $\mu(x,dy)$ on $\R^p$
   such that for all $x_0,y_0,T$ $\big((x_0,y_0):=\eta(u_0)\big)$ and $\varphi$ with uniformly bounded $C^r$ derivatives for $r\leq 3$,
    \begin{equation}
    \Big|\frac{1}{T}\int_0^T \E\big[\varphi(y^{\alpha}_s)\big]\,ds-\int \varphi(y)\mu(x_0,dy)\Big|\leq \chi\big(\|(x_0,y_0)\|\big) \big(E_1(T)+E_2(T \alpha^\nu)\big) \max_{r\leq 3}\|\varphi\|_{C^r}
    \end{equation}
where $r\mapsto \chi(r)$ is bounded on compact sets and $E_2(r)\rightarrow 0$ as $r\rightarrow \infty$ and $E_1(r)\rightarrow 0$ as $r\rightarrow 0$.
\item For all $u_0$, $T>0$,  $\sup_{0\leq t\leq T} \E\Big[\chi\big(\|u_{t}^{\alpha,0}\|\big)\Big]$ is uniformly bounded in $\alpha\geq 1$.
\end{enumerate}
\end{Condition}
\begin{Remark}
As in the proof of Theorem \ref{thm01}, the uniform regularity of $g$ and $\sigma$ can be relaxed to local regularity by adding a control on the rate of escape of the process towards infinity. To simplify the presentation, we have use the global uniform regularity.
\end{Remark}

Let $\omega$ be a random sample from a probability space $(\Omega,\mathcal{F},\P)$ and
 $\Phi_h^{\alpha,\epsilon}(.,\omega)$  a random mapping from $\R^d$ onto $\R^d$ approximating in distribution the flow of \eref{jkhgsdejgdshhh} over time steps  $\tau\ll\epsilon$.
If the parameter $\alpha$ can be controlled, then $\Phi_h^{\alpha,\epsilon}$ can be used as a black box for accelerating the computation of solutions of \eref{jkhgsdejgdshhh}. The acceleration is obtained without prior identification of the slow variables.

\begin{Condition}\label{lsduisdsdsdjgsahaagseehx}
Assume that:
\begin{enumerate}
\item There exists $h_0, C>0$ and a $d$-dimensional centered Gaussian  vector $\xi(\omega)$ with identity covariance matrix such that for $h\leq h_0$, $0<\epsilon \leq 1 \leq \alpha$ and $h\leq h_0 \min(\frac{1}{\alpha^\nu},1) $
\begin{equation}\label{lsfasshjhhddlfssadkjlkd}
\Bigg(\E\Big[\big|\Phi_h^{\alpha,\epsilon}(u)-u-h F(u,\alpha,\epsilon)-\sqrt{h}\xi(\omega)K(u,\alpha,\epsilon) \big|^2 \Bigg)^\frac{1}{2}\leq C h^\frac{3}{2} (1+\alpha^{\frac{3\nu}{2}})
\end{equation}
\item For all $u_0$, $T>0$,  $\sup_{0\leq n\leq T/\delta} \E\Big[\chi\big(\|\bar{u}_{n\delta}\|\big)\Big]$ is uniformly bounded in $\epsilon$, $0< \delta \leq h_0$, $\tau \leq \min(h_0 \epsilon^\nu, \delta)$,
    where $\bar{u}$ is defined by \eref{ksjhdskdcrsdrfdaue0}.
\end{enumerate}
\end{Condition}

\paragraph{FLAVORS}
Let  $\delta \leq h_0$ and $\tau \in (0,\delta)$ such that $\tau \leq \tau_0 \epsilon^\nu$.
We define FLAVORS as the class of algorithms simulating the  stochastic process  $t\mapsto \bar{u}_{t}$ defined by
\begin{equation}\label{ksjhdskdcrsdrfdaue0}
\begin{cases}
\bar{u}_{0}=u_0\\
\bar{u}_{(k+1)\delta}= \Phi^{0,\epsilon}_{\delta-\tau}(.,\omega_k') \circ \Phi^{\frac{1}{\epsilon},\epsilon}_{\tau}(\bar{u}_{k\delta},\omega_k)\\
\bar{u}_{t}=\bar{u}_{k\delta}\quad \text{for}\quad k\delta \leq t <(k+1)\delta
\end{cases}
\end{equation}
where $\omega_k,\omega_k'$ are i.i.d. samples from the probability space $(\Omega,\mathcal{F},\P)$.
\begin{Remark}
$\omega_k$ simulates the randomness of the increment of the Brownian Motion between times $\delta k$ and $\delta k+\tau$. $\omega_k'$ simulates the randomness of the increment of the Brownian Motion between times  $\delta k+\tau$ and $\delta (k+1)$.
The independence of $\omega_k$ and $\omega_k'$ is reflection of the independence of the increments of a Brownian Motion.
\end{Remark}

The following theorem  shows that the flow averaging integrator is accurate with respect to $F$-convergence for $\tau \ll \epsilon^\nu \ll\delta$ and
\begin{equation}
\big(\frac{\tau}{\epsilon^\nu}\big)^\frac{3}{2}\ll\delta \ll \frac{\tau}{\epsilon^\nu} .
\end{equation}

\begin{Theorem}\label{thmdssdrgefd2sddsd}
Let $u_t^{\frac{1}{\epsilon},\epsilon}$ be the solution to \eref{jkhgsdejgdshhh} with $\alpha=1/\epsilon$ and $\bar{u}_t$ be defined by \eref{ksjhdskdcrsdrfdaue0}. Assume that Conditions \ref{lsdsddcddsdsdseeehx} and \ref{lsduisdsdsdjgsahaagseehx} are satisfied
 then
 \begin{itemize}
\item $u_t^{\frac{1}{\epsilon},\epsilon}$ $F$-converges towards $\eta^{-1}*\big(\delta_{X_t}\otimes \mu(X_t,dy)\big)$ as $\epsilon \downarrow 0$ where $X_t$ is the solution to
\begin{equation}\label{skdljhdksddedehdjksdh}
d X_t=\int g(X_t,y)\,\mu(X_t,dy)+ \bar{\sigma}(X_t)\,dB_t\quad \quad X_0=x_0
\end{equation}
where $\bar{\sigma}$ is a $(d-p)\times (d-p)$ matrix field defined by
\begin{equation}\label{cdew}
\bar{\sigma}\bar{\sigma}^T= \int \sigma \sigma^T(x,y)\,\mu(x,dy)
\end{equation}
and $B_t$ a $(d-p)$-dimensional Brownian Motion.

\item As $\epsilon \downarrow 0$, $\tau \epsilon^{-\nu} \downarrow 0$,  $\delta \frac{\epsilon^\nu}{\tau } \downarrow 0$, $\big(\frac{\tau}{\epsilon^\nu} \big)^\frac{3}{2}\frac{1}{\delta} \downarrow 0$, $\bar{u}_t$ $F$-converges towards $\eta^{-1}*\big(\delta_{X_t}\otimes \mu(X_t,dy)\big)$ as $\epsilon \downarrow 0$ where $X_t$ is the solution to \eref{skdljhdksddedehdjksdh}.
\end{itemize}
\end{Theorem}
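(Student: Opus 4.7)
The plan is to treat Theorem \ref{thmdssdrgefd2sddsd} as the stochastic analogue of Theorem \ref{thmdssdr2sddsd} and a parametrized extension of Theorem \ref{thm04}, so the two parts of the statement will be handled by (i) classical averaging for SDEs with hidden slow/fast variables, and (ii) a generator-convergence argument in the spirit of Skorokhod, adapted to account for the nonlinear coordinate change $\eta$ and the coupling parameter $\nu$.

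For the first bullet, I would start by comparing $u^{\frac{1}{\epsilon},\epsilon}$ to $u^{\frac{1}{\epsilon},0}$. Since item 1 of Condition \ref{lsdsddcddsdsdseeehx} gives uniform continuity of $F$ and $K$ in $\gamma$ near $0$, a standard Gronwall-type estimate on the SDE \eref{jkhgsdejgdshhh} yields that on any finite time interval $u^{\alpha,\epsilon}\to u^{\alpha,0}$ in probability as $\epsilon \downarrow 0$ (uniformly in $\alpha = 1/\epsilon$ because $F,K$ depend on $\alpha$ only through the $\eta$-coordinatized system \eref{kfgfgdiuucdqcdciusesdejhdS}, which is $\alpha$-independent up to the time reparameterization of the fast variables). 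I would then invoke the classical averaging theorem for SDEs with separated timescales (Theorem 1 of Chapter II in \cite{MR1020057}), whose hypotheses are guaranteed by items 2--4 of Condition \ref{lsdsddcddsdsdseeehx}: the ergodic family $\mu(x,dy)$ with the mixing estimate, together with Lipschitzness and moment control. This yields weak convergence of $x^{\alpha} = \eta^x(u^{\alpha,0})$ to $X_t$ of \eref{skdljhdksddedehdjksdh} and the averaged quadratic covariation \eref{cdew}, which translates into $F$-convergence of $u_t^{\frac{1}{\epsilon},\epsilon}$ to $\eta^{-1}*\bigl(\delta_{X_t}\otimes \mu(X_t,dy)\bigr)$.

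For the second bullet, I would work in the $\eta$-coordinates and study $(\bar{x}_{n\delta},\bar{y}_{n\delta}) := \eta(\bar{u}_{n\delta})$, as in the commutative diagram of Subsection \ref{rationale}. Taylor-expanding $\eta$ and using \eref{lsfasshjhhddlfssadkjlkd} shows that one stiff substep $\Phi^{\frac{1}{\epsilon},\epsilon}_\tau$, followed by one non-stiff mesostep $\Phi^{0,\epsilon}_{\delta-\tau}$, produces increments that match the drift and diffusion of \eref{kfgfgdiuucdqcdciusesdejhdS} up to an $\eta$-Hessian remainder of $L^2$-size $(\tau/\epsilon^\nu)^{3/2}$ per iteration on the slow coordinates (the $3/2$ exponent is the stochastic analogue of the deterministic $2$ in equation \eref{acc1}). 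Over a coarse time unit there are $1/\delta$ iterations of the stiff step, so the accumulated slow-variable error is $\bigl(\tau/\epsilon^\nu\bigr)^{3/2}/\delta$, whose vanishing is exactly the left-hand condition in the theorem. Meanwhile the fast clock is advanced by $(\tau/\epsilon^\nu)/\delta$ per mesostep, and the condition $\delta \epsilon^\nu/\tau \downarrow 0$ makes this ratio tend to infinity, ensuring mixing of $\bar{y}$ against $\mu(\bar{x},dy)$ via \eref{fgfhgfhttf}. Combining the two, the effective one-step generator of $\bar{u}_{n\delta}$ converges to the averaged generator associated with \eref{skdljhdksddedehdjksdh} on test functions in $C^3_b$; applying Skorokhod's criterion (Theorem 1, Chapter II of \cite{MR1020057}, as used in Theorem \ref{thm04}) gives convergence in distribution, which combined with the short-time averaging inside each coarse window of length $h\to 0$ yields $F$-convergence.

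The main obstacle is the simultaneous bookkeeping of three small parameters along two-scale flow convergence: I need to show that after the outer time-averaging over $[t,t+h]$ and the scheme limit $\epsilon\downarrow 0$ (with $\tau/\epsilon^\nu\downarrow 0$, $\delta\epsilon^\nu/\tau\downarrow 0$, and $(\tau/\epsilon^\nu)^{3/2}/\delta\downarrow 0$), the empirical occupation of $\bar{y}_{n\delta}$ over each coarse window converges to $\mu(X_t,dy)$ despite the fact that $\bar{y}$ is driven by the noise-coupled SDE (not an autonomous fast SDE at frozen $\bar{x}$). This requires controlling the drift of $\bar{x}$ over each coarse window by the $\chi$-moment bound in item 4 of Condition \ref{lsdsddcddsdsdseeehx} together with item 2 of Condition \ref{lsduisdsdsdjgsahaagseehx}, and decoupling it from the mixing estimate \eref{fgfhgfhttf} via the $E_1$ term on short windows and the $E_2(T\alpha^\nu)$ term for long fast-clock advances. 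Once this is in place, the remainder of the argument closes exactly as in the proof of Theorem \ref{thm04}, to which the detailed Section \ref{iuiuyuuyuyuytuty} of the appendix may be referred.
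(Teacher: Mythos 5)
Your proposal follows essentially the same route as the paper: the paper's own proof is a short sketch that defers to the proof of Theorem \ref{thm04} (coordinate change via $\eta$, It\^{o}--Taylor one-step expansions with per-step error $\big(\frac{\tau}{\epsilon^\nu}\big)^{3/2}$, accumulation over $1/\delta$ steps, mixing under $\delta\epsilon^\nu/\tau\downarrow 0$, generator convergence and Skorokhod's criterion), together with the approximation of $u^{\alpha,\epsilon}$ by $u^{\alpha,0}$ and classical averaging for the first bullet, which is exactly the structure and the role you assign to each scaling condition. Your write-up is correct and, if anything, slightly more detailed than the paper's sketch.
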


\begin{proof}
The proof of Theorem \ref{thmdssdrgefd2sddsd} is similar to the proof of Theorem \ref{thm04}. The condition $\epsilon \ll 1$ is needed for the approximation of $u^{\alpha,\epsilon}$ by $u^{\alpha,0}$ and for the $F$-convergence of $u^{\frac{1}{\epsilon},0}$.
Since $y^\alpha_t=\eta^y(u^{\alpha,0}_t)$ the condition
  $\tau \ll \epsilon^\nu$ is used along with Equation  \eref{lsfasshjhhddlfssadkjlkd} for the accuracy of $\Phi^{\frac{1}{\epsilon},\epsilon}_{\tau}$ in (locally) approximating  $y^\alpha_t$. The condition $\delta \ll \frac{\tau}{\epsilon^\nu}$ allows for the averaging of $g$ and $\sigma$ to take place prior to a significant change of $x\alpha_t$; more precisely, it allows for $m\gg1$ iterations  of  $\Phi^{\frac{1}{\epsilon},\epsilon}_{\tau}$ prior to a significant change of $x\alpha_t$.
  The condition $\big(\frac{\tau}{\epsilon^\nu}\big)^\frac{3}{2}\ll\delta$ is required in order to control the error accumulated  by $m$ iterations of $\Phi^{\frac{1}{\epsilon},\epsilon}_{\tau}$.
\end{proof}

\section{Stochastic mechanical systems: Langevin equations}\label{kshskjhdklshdkjhjh}
Since the foundational work of Bismut \cite{Bi1981}, the field of stochastic geometric mechanics has grown in response to the demand for tools to analyze the structure of continuous and discrete mechanical systems with uncertainty \cite{Skeel2002, Ha2007,VaCi2006,CiLeVa2008, MiReTr2002, MiReTr2003, MiTr2004,  MR2408499, MR2385877, MR2491434, BoOw:09, BoVa:09}. Like their deterministic counterparts, these integrators are structure preserving in terms of statistical invariants.

In this section,  FLAVORS are developed to be structure preserving integrators for stiff stochastic mechanical systems, i.e., stiff Langevin equations of the form
\begin{equation}\label{jdhsjhgdjwghe}
\begin{cases}
dq=M^{-1}p\\
dp= -\nabla V(q)\,dt-\frac{1}{\epsilon}\nabla U(q)\,dt - c p\, dt+\sqrt{2 \beta^{-1}} c^\frac{1}{2} dW_t
\end{cases}
\end{equation}
and of the form
\begin{equation}\label{jsdkjdshdgjdhd}
\begin{cases}
dq=M^{-1}p\\
dp= -\nabla V(q)\,dt-\frac{1}{\epsilon}\nabla U(q)\,dt - \frac{c}{\epsilon} p\, dt+\sqrt{2 \beta^{-1}}\frac{ c^\frac{1}{2}}{\sqrt{\epsilon}} dW_t
\end{cases}
\end{equation}
where $c$ is a positive symmetric $d\times d$ matrix.
\begin{Remark}
 Provided that hidden fast variables remain locally ergodic, one can also consider Hamiltonians with a mixture of both slow and fast noise and friction. For the sake of clarity, we have restricted our presentation to \eref{jdhsjhgdjwghe} and \eref{jsdkjdshdgjdhd}.
\end{Remark}
Equations \eref{jdhsjhgdjwghe} and \eref{jsdkjdshdgjdhd} model
a mechanical system with Hamiltonian
\begin{equation}\label{kdshdkjhsjkdhfffjdhhhc}
H(q,p):=\frac{1}{2}p^T M^{-1} p+V(q)+\frac{1}{\epsilon} U(q) .
\end{equation}
The phase space is the Euclidean space  $\R^d \times \R^d$ or  a cotangent bundle $T^*\mathcal{M}$ of  a configuration manifold $\mathcal{M}$.
\begin{Remark}
If $c$ is not constant and $\mathcal{M}$ is not the usual $\R^d \times \R^d$ Euclidean space, one should use the Stratonovich integral instead of the It\^{o} integral.
\end{Remark}

\subsection{FLAVORS for stochastic mechanical systems on manifolds}\label{sublan}
As in Section \ref{jhhsghdjshdjg}, we assume that we are given a mapping $\Phi_h^{\alpha}$ acting on the phase space such that for $h\leq h_0 \min(1,\alpha^{-\frac{1}{2}})$
\begin{equation}\label{lssdkjjfdassddlfskjdjlkd}
\Big|\Phi_h^{\alpha}(q,p)-(q,p)-h \big(M^{-1}p,-V(q)-\alpha U(q)\big) \Big|\leq C h^2 (1+\alpha)
\end{equation}
Next, consider the following Ornstein-Uhlenbeck equations:
\begin{equation}\label{jkdjfghffdkjheg}
dp=-\alpha c p\,dt + \sqrt{\alpha}\sqrt{2 \beta^{-1}} c^\frac{1}{2} dW_t
\end{equation}
The stochastic flow of \eref{jkdjfghffdkjheg} is defined by the following stochastic evolution map:
\begin{equation}\label{jkdjfghffdkjhegSe}
\Psi_{t_1,t_2}^\alpha(q,p)=\Big(q,e^{-c\alpha(t_2-t_1)}p+\sqrt{2 \beta^{-1} \alpha} c^\frac{1}{2}\int_{t_1}^{t_2} e^{-c\alpha(t_2-s)} dW_s \Big)
\end{equation}
Let  $\delta \leq h_0$ and $\tau \in (0,\delta)$ such that $\tau \leq \tau_0/ \sqrt{\alpha}$. FLAVOR for \eref{jdhsjhgdjwghe} can then be defined by
\begin{equation}\label{kdfsjhdskdcrrfdaue0}
\begin{cases}
(\bar{q}_{0},\bar{p}_0)=(q_0,p_0)\\
(\bar{q}_{(k+1)\delta},\bar{p}_{(k+1)\delta})=
\Phi^{0}_{\delta-\tau}\circ \Psi_{k \delta+\tau,(k+1)\delta}^{1}\circ
\Phi^{\frac{1}{\epsilon}}_{\tau}\circ \Psi_{k\delta, k \delta+\tau}^{1}(\bar{q}_{k\delta},\bar{p}_{k\delta})
\end{cases}
\end{equation}
and FLAVOR for \eref{jsdkjdshdgjdhd} can be defined by
\begin{equation}\label{kdfsjhdiiuskdcrrfdauijhe0}
\begin{cases}
(\bar{q}_{0},\bar{p}_0)=(q_0,p_0)\\
(\bar{q}_{(k+1)\delta},\bar{p}_{(k+1)\delta})=
\Phi^{0}_{\delta-\tau}\circ
\Phi^{\frac{1}{\epsilon}}_{\tau}\circ \Psi_{k\delta, k \delta+\tau}^{\frac{1}{\epsilon}}(\bar{q}_{k\delta},\bar{p}_{k\delta})
\end{cases}
\end{equation}

 Theorem \ref{thmdssdrgefd2sddsd} establishes the accuracy of these integrators under Conditions \ref{lsdsddcddsdsdseeehx} and \ref{lsduisdsdsdjgsahaagseehx} provided that
$\tau \ll\sqrt{\epsilon} \ll\delta$ and $\big(\frac{\tau}{\sqrt{\epsilon}}\big)^\frac{3}{2}\ll\delta \ll \frac{\tau}{\sqrt{\epsilon}}$.

\subsection{Structure Preserving properties of FLAVORS for stochastic mechanical systems on manifolds}
First, observe that if $\Phi^{\alpha}_h$ and $\Psi^{\frac{1}{\epsilon}}_h$ are symmetric under a group action for all $\epsilon>0$, then the resulting FLAVOR, as a
symmetric composition of symmetric steps, is symmetric under the same  group action (see comment below Theorem \ref{sjhdsjdjhsgdjh}).

 Similarly, the following theorem shows that FLAVORS inherits  structure-preserving properties from  those associated with $\Phi^\alpha_h$ (the component approximating the Hamiltonian part of the flow).
\begin{Theorem}\label{sjhdsjdjhsgdhshhjh}
$\quad$
\begin{itemize}
\item If $\Phi^\alpha_h$ is symplectic, then the FLAVORS defined by \eref{kdfsjhdskdcrrfdaue0} and \eref{kdfsjhdiiuskdcrrfdauijhe0}
are quasi-symplectic as defined in Conditions RL1 and RL2 of \cite{MiTr2003} (it degenerates to a symplectic method if friction is set equal to zero and the Jacobian of the flow map is independent of $(q,p)$).
\item If in addition
 $c$ is isotropic then FLAVOR defined by \eref{kdfsjhdskdcrrfdaue0}  is conformally  symplectic, i.e., it preserves the precise symplectic area
change associated to the flow of inertial Langevin processes \cite{McPe2001}.
\end{itemize}
\end{Theorem}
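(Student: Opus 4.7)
The plan is to exploit the very explicit structure of the elementary maps composing the FLAVORS: by \eref{jkdjfghffdkjhegSe}, each Ornstein--Uhlenbeck factor $\Psi^\alpha_{t_1,t_2}$ is \emph{affine} in $(q,p)$, with deterministic linear part $\operatorname{diag}(I,\, e^{-c\alpha(t_2-t_1)})$ and an additive stochastic integral that depends on $(\omega,t_1,t_2)$ alone, not on the phase variables. Combined with the assumed symplecticity of $\Phi^\alpha_h$, this reduces every structural claim to elementary block-matrix and exterior-algebra manipulations. In particular, the noise drops out of both the Jacobian of $\Psi$ and of the pullback of the canonical symplectic form.

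For the first bullet I would verify the two Milstein--Tretyakov conditions separately. For RL1, setting $c=0$ in \eref{jkdjfghffdkjhegSe} gives $\Psi^\alpha_{t_1,t_2}=\mathrm{Id}$ identically, so each of \eref{kdfsjhdskdcrrfdaue0} and \eref{kdfsjhdiiuskdcrrfdauijhe0} collapses to $\Phi^0_{\delta-\tau}\circ \Phi^{1/\epsilon}_\tau$, which is symplectic as a composition of symplectic maps. For RL2, differentiating \eref{jkdjfghffdkjhegSe} with respect to $(q,p)$ gives
\[
\nabla\Psi^\alpha_{t_1,t_2}(q,p) \;=\; \begin{pmatrix} I & 0 \\ 0 & e^{-c\alpha(t_2-t_1)} \end{pmatrix},
\]
independent of $(q,p)$, with determinant $e^{-\alpha(t_2-t_1)\,\Tr c}$. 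Since symplectic maps have unit-determinant Jacobian, the one-step Jacobian of either FLAVOR has a deterministic, $(q,p)$-independent determinant, which is precisely RL2.

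For the second bullet, I would pull the canonical form $\omega_0=\sum_i dq_i\wedge dp_i$ back through each factor. Since $c=\gamma I$ acts as a scalar on momenta and since the additive noise in \eref{jkdjfghffdkjhegSe} has zero differential in $(q,p)$, a direct computation yields
\[
(\Psi^\alpha_{t_1,t_2})^*\omega_0 \;=\; e^{-\gamma\alpha(t_2-t_1)}\,\omega_0,
\]
while $(\Phi^\alpha_h)^*\omega_0=\omega_0$ by symplecticity. In \eref{kdfsjhdskdcrrfdaue0} the two $\Psi^{1}$ factors cover disjoint intervals of total length $\tau+(\delta-\tau)=\delta$, so the one-step pullback equals $e^{-\gamma\delta}\omega_0$. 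This matches the exact symplectic-area change produced by the continuous flow of \eref{jdhsjhgdjwghe} over a time step $\delta$, which is conformal symplecticity in the sense of \cite{McPe2001}.

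The main thing to be careful about, and where essentially all of the subtlety of the argument lives, is confirming that the stochastic drivers in $\Psi$ do not introduce hidden $(q,p)$-dependence. Because the stochastic integral in \eref{jkdjfghffdkjhegSe} has an integrand that is deterministic and independent of the phase variables, it is a pure additive translation in $(q,p)$ and therefore contributes neither to $\nabla\Psi$ nor to $\Psi^*\omega_0$, along every sample path. Once this observation is recorded, both bullets reduce to the block-matrix and wedge-product calculations above, and the remaining bookkeeping is just tracking how the total time $\delta$ is split between the $\Phi$ and $\Psi$ factors in the specific composition defining each FLAVOR.
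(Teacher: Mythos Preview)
Your argument is correct and is essentially a fully spelled-out version of what the paper does. The paper's own proof is a two-line remark: it observes that FLAVORS are splitting schemes and notes that the quasi-symplecticity and conformal symplecticity of the closely related Geometric Langevin Algorithm were already established in \cite{BoOw:09} by the same mechanism. You have supplied precisely those details: the affine structure of $\Psi^\alpha_{t_1,t_2}$ with $(q,p)$-independent linear part $\operatorname{diag}(I,e^{-c\alpha(t_2-t_1)})$, the verification of RL1 (the method collapses to $\Phi^0_{\delta-\tau}\circ\Phi^{1/\epsilon}_\tau$ when $c=0$) and RL2 (the Jacobian determinant is deterministic and phase-independent), and the pullback computation $(\Psi^\alpha_{t_1,t_2})^*\omega_0=e^{-\gamma\alpha(t_2-t_1)}\omega_0$ for isotropic $c=\gamma I$, with the two $\Psi^1$ factors in \eref{kdfsjhdskdcrrfdaue0} combining to give exactly $e^{-\gamma\delta}$.

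One small point worth making explicit in your write-up: you correctly restrict the conformal-symplecticity claim to \eref{kdfsjhdskdcrrfdaue0}, but it may help the reader to say why \eref{kdfsjhdiiuskdcrrfdauijhe0} is excluded. There the single $\Psi^{1/\epsilon}$ factor acts over an interval of length $\tau$, producing a contraction $e^{-\gamma\tau/\epsilon}$, whereas the exact flow of \eref{jsdkjdshdgjdhd} over one mesostep contracts by $e^{-\gamma\delta/\epsilon}$; these differ unless $\tau=\delta$, so only quasi-symplecticity (not exact conformal symplecticity) survives in the fast-friction case.
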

\begin{proof}
Those properties are a consequence of the fact that FLAVORS are splitting schemes. The quasi-symplecticity and symplectic conformallity of GLA has been obtained in a similar way in \cite{BoOw:09}.
\end{proof}

\subsubsection{Example of quasi-symplectic FLAVORS}\label{qsfla} An example of quasi-symplectic FLAVOR can be obtained by choosing $\Phi^\alpha_h$ to be the symplectic Euler integrator defined by \eref{lsjfaijissdsdedlfskjlkd}. This integrator is also conformally symplectic if $c$ is isotropic and friction is slow.

\subsubsection{Example of quasi-symplectic and time-reversible FLAVORS}
Defining $\Phi^\alpha_h$ by \eref{lsjfaijissdsdedlfskjlkd} and $\Phi_h^{\alpha,*}$ by \eref{lsjfaijissddlfskjlkd}, an example of quasi-symplectic and time-reversible FLAVOR can be obtained by using the  symmetric Strang splitting:

\begin{equation}\label{kdfsjhdsskdcrrfdaue0}
(\bar{q}_{(k+1)\delta},\bar{p}_{(k+1)\delta})=\Psi_{k\delta+\frac{\delta}{2}, (k+1) \delta}^{1} \circ
\Phi_{\frac{\tau}{2}}^{\frac{1}{\epsilon},*}\circ \Phi_{\frac{\delta -\tau}{2}}^{0,*}\circ \Phi_{\frac{\delta -\tau}{2}}^{0}\circ \Phi_{\frac{\tau}{2}}^{\frac{1}{\epsilon}}
\circ \Psi_{k\delta, k \delta+\frac{\delta}{2}}^{1}(q,p)
\end{equation}
for \eref{jdhsjhgdjwghe} and
\begin{equation}\label{kdfsjdhdiiuskdcrrfdauijhe0}
(\bar{q}_{(k+1)\delta},\bar{p}_{(k+1)\delta})=
\Psi_{(k+1)\delta-\frac{\tau}{2}, (k+1) \delta}^{\frac{1}{\epsilon}} \circ
\Phi_{\frac{\tau}{2}}^{\frac{1}{\epsilon},*}\circ \Phi_{\frac{\delta -\tau}{2}}^{0,*}\circ \Phi_{\frac{\delta -\tau}{2}}^{0}\circ \Phi_{\frac{\tau}{2}}^{\frac{1}{\epsilon}}
\circ \Psi_{k\delta, k \delta+\frac{\tau}{2}}^{\frac{1}{\epsilon}}(q,p)
\end{equation}
for \eref{jsdkjdshdgjdhd}. This integrator is also conformally symplectic if $c$ is isotropic and friction is slow.

\subsubsection{Example of Boltzmann-Gibbs reversible  Metropolis-adjusted FLAVORS}
Since the probability density of $\Psi_{t_1,t_2}$ can  be explicitly computed, it follows that the probability densities of \eref{kdfsjhdsskdcrrfdaue0} and \eref{kdfsjdhdiiuskdcrrfdauijhe0} can be explicitly computed, and these algorithms can be metropolized and made reversible with respect to the Gibbs distribution as it has been shown in \cite{BoVa:09} for the Geometric Langevin Algorithm introduced in \cite{BoOw:09}. This metropolization leads to stochastically stable (and ergodic if the noise  applied on momentum is not degenerate) algorithms. We refer to \cite{BoVa:09} for details. Observe that if the proposed move is rejected, the momentum has to be flipped and the acceptance probability involves a momentum flip. It is proven in \cite{BoVa:09} that GLA \cite{BoOw:09} remains strongly accurate after a metropolization  involving local momentum flips. Whether this preservation of accuracy over trajectories transfers in a weak sense (in distributions) to FLAVORS remains to be investigated.

\section{Numerical analysis of FLAVOR based on Variational Euler}
\label{kjkjdhkdjhhkdhlalala}

\begin{figure}
\centering
\subfigure[Nonintrusive FLAVOR]{
\includegraphics[width=0.45\textwidth]{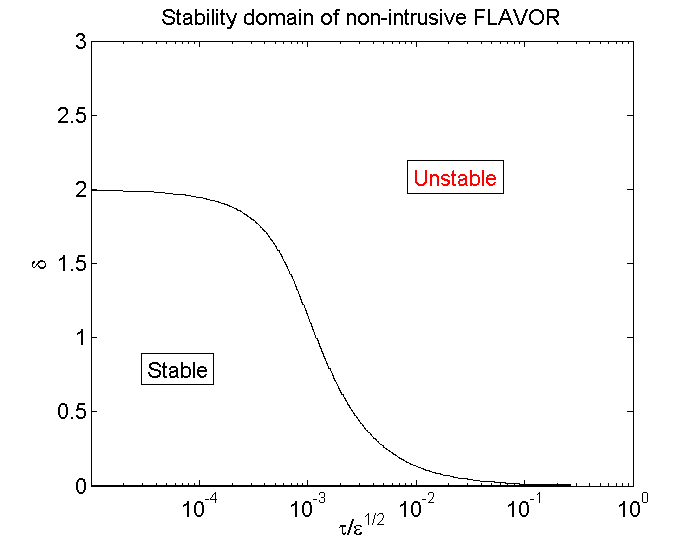}
\label{error4nonintrusive}
}
~
\subfigure[Artificial FLAVOR]{
\includegraphics[width=0.45\textwidth]{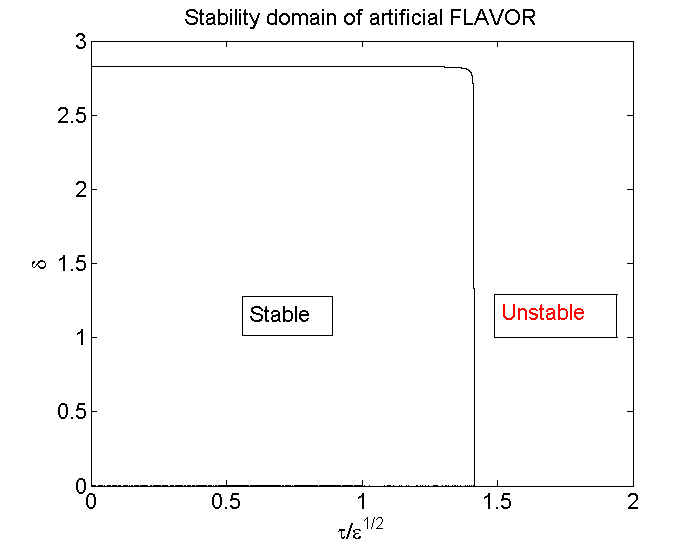}
\label{error4artificial}
}
\caption{\footnotesize Stability domain of non-intrusive and artificial FLAVOR applied to \eref{erroranalysis_example2} as a function of $\delta$ and $\tau/\epsilon$. $\omega=1/\sqrt{\epsilon}=1000$.}
\end{figure}

\subsection{Stability}\label{kjkjdhkdjhhkdhlala}
Consider the following linear Hamiltonian system
\begin{equation}
    H(x,y,p_x,p_y)=\frac{1}{2}p_x^2+\frac{1}{2}p_y^2+\frac{1}{2}x^2+\frac{\omega^2}{2}(y-x)^2
    \label{erroranalysis_example2}
\end{equation}
 with $\omega\gg 1$. Here $\frac{x+y}{2}$ is the slow variable and $y-x$ is the fast variable.

It can be shown that, when applied to \eref{erroranalysis_example2}, Symplectic  Euler \eref{lsjfaijissdsdedlfskjlkd} is stable if and only if $h \leq \sqrt{2}/\omega$.
 Write $\Theta_{\delta,\tau}$ the non-intrusive FLAVOR \eref{jhjhgjhgjg87g87A} obtained by using Symplectic Euler \eref{lsjfaijissdsdedlfskjlkd} as a Legacy integrator. Write $\Theta^a_{\delta,\tau}$
the artificial FLAVOR described in Subsection \ref{gjsdgjhdhjhwg}.

 \begin{Theorem}\label{FLAVOR_stability_sqrt2}
 The non-intrusive FLAVOR $\Theta_{\delta,\tau}$ with $1/\sqrt{\tau}\gg\omega\gg1$ is stable if and only if $\delta \in (0,2)$.\\
 The artificial FLAVOR $\Theta^a_{\delta,\tau}$ with $1/\tau\gg \omega\gg 1$ is stable if and only if   $\delta \in (0,2\sqrt{2})$.
 \end{Theorem}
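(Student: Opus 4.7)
The approach is to exploit linearity. Since the Hamiltonian \eref{erroranalysis_example2} is quadratic, every ingredient of both FLAVORS is a symplectic linear map on $\R^4$: for $\Theta_{\delta,\tau}$ both $\Phi^{\omega^2}_\tau$ and $\Phi^0_{\delta-\tau}$ are Symplectic Euler steps \eref{lsjfaijissdsdedlfskjlkd} applied to a quadratic Hamiltonian, and for $\Theta^a_{\delta,\tau}$ each of $\theta^V_\delta$, $\theta^\epsilon_\tau$, $\theta^{tr}_{\delta-\tau}$ is an exact (possibly constrained) linear flow. The one-step matrix is therefore $4\times 4$ and symplectic, and its characteristic polynomial is palindromic, $\lambda^4-c_1\lambda^3+c_2\lambda^2-c_1\lambda+1$. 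Substituting $\mu=\lambda+\lambda^{-1}$ reduces it to the quadratic $\mu^2-c_1\mu+(c_2-2)=0$, and the FLAVOR is (spectrally) stable iff both $\mu$-roots are real and lie in $[-2,2]$. The plan is thus to compute $c_1,c_2$ asymptotically in each regime and read off the stability interval.

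For the non-intrusive FLAVOR, the regime $1/\sqrt\tau\gg\omega$ forces $\tau\omega^2\to 0$. Writing $\Phi^{\omega^2}_\tau$ and $\Phi^0_{\delta-\tau}$ out from \eref{lsjfaijissdsdedlfskjlkd} and multiplying, every cross-term between the stiff block and the soft block carries at least one factor $\tau\omega^2$ and vanishes in the limit. What remains is the Symplectic Euler matrix for the slow harmonic oscillator $(x,p_x)$ of frequency $1$ with step $\delta-\tau\simeq\delta$ (coming from $\Phi^0_{\delta-\tau}$), together with the pure drift $y\mapsto y+\delta p_y$ in the neutral $(y,p_y)$ block. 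The $\mu$-quadratic becomes $(\mu-(2-\delta^2))(\mu-2)=0$ up to $o(1)$, and the only nontrivial root is $\mu=2-\delta^2$; $|\mu|<2$ then yields $\delta\in(0,2)$, with $\delta=2$ producing $\mu=-2$ (double $\lambda=-1$) and $\delta>2$ giving $|\lambda|>1$.

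For the artificial FLAVOR, the same template applies once the three factors are written out: $\theta^V_\delta$ kicks only $p_x$ by $-\delta x$, $\theta^\epsilon_\tau$ advances $q$ by $\tau p$ and kicks $(p_x,p_y)$ antisymmetrically by $\pm(\tau/\epsilon)(y_2-x_2)$, and $\theta^{tr}_{\delta-\tau}$ translates $(x,y)$ equally by $(\delta-\tau)(p_x+p_y)/2$ leaving momenta untouched. The decisive algebraic observation is that $\theta^{tr}$ uses only the symmetric combination $p_x+p_y$, which is invariant under the antisymmetric kick of $\theta^\epsilon_\tau$; hence the $O(\tau/\epsilon)$ terms drop out of the position updates even though $\tau/\epsilon$ need not be small. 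Taking $\tau\to 0$ (permitted by $1/\tau\gg\omega$), a direct computation yields the characteristic polynomial $(\lambda-1)^2\bigl(\lambda^2+(\delta^2/2-2)\lambda+1\bigr)$, with nontrivial factor producing $\mu=2-\delta^2/2$. This is precisely Symplectic Euler on a harmonic oscillator of frequency $1/\sqrt 2$, the slow normal mode of \eref{erroranalysis_example2}, and $|\mu|<2$ gives $\delta\in(0,2\sqrt 2)$.

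The main obstacle is the artificial case: in the regime $1/\tau\gg\omega$ the quantity $\tau\omega^2=\tau/\epsilon$ is not small, so the $\theta^\epsilon_\tau$ kicks are non-perturbative. Rigor requires verifying that the antisymmetric-kick cancellation in $\theta^{tr}$ is exact at every mesostep, so that no hidden growth of $p_x-p_y$ leaks back into the positions through the next $\theta^V$ and $\theta^\epsilon$, and then checking (by continuity in $\tau$, using that $\mu=2-\delta^2/2$ crosses $\pm 2$ transversally) that the $O(\tau)$ corrections missing from the $\tau=0$ computation do not shift the threshold $\delta=2\sqrt 2$. In the non-intrusive case the corresponding continuity argument holds uniformly in the stronger regime $\tau\omega^2\to 0$ and is routine.
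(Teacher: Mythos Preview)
Your approach is essentially the paper's: write the one-step map as a $4\times 4$ symplectic matrix, compute the characteristic polynomial, pass to the asymptotic regime, and read off the stability interval from the roots. The palindromic reduction via $\mu=\lambda+\lambda^{-1}$ is a clean repackaging of what the paper does by factoring out $(\lambda-1)^2$ and writing the remaining roots explicitly; your $\mu=2-\delta^2$ and $\mu=2-\delta^2/2$ match the paper's nontrivial root pairs $\tfrac12(2-\delta^2\pm\delta\sqrt{\delta^2-4})$ and $\tfrac14(4-\delta^2\pm\delta\sqrt{\delta^2-8})$.

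One point worth tightening in the artificial case. Your symmetry observation---that the $\theta^\epsilon_\tau$ kick is antisymmetric in $(p_x,p_y)$ while $\theta^{tr}$ reads only $p_x+p_y$---correctly explains why $\tau\omega^2$ does not contaminate the position update within a single mesostep. But it does not by itself show that $\tau\omega^2$ is absent from the \emph{spectrum}: the antisymmetric kick does modify $p_y-p_x$, which feeds into $y-x$ via the $\tau$-drift of the next $\theta^\epsilon_\tau$, and $x$ enters $p_y-p_x$ through $\theta^V_\delta$. The paper resolves this simply by writing the exact characteristic polynomial and observing that $\omega$ appears only through $\tau^2\omega^2$ (never as a bare $\tau\omega^2$), so that $\tau\omega\ll 1$ suffices to drop all $\omega$-dependent terms. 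Your intuition is the structural reason this happens, but to make the argument rigorous you should either carry out that same polynomial computation or promote the one-step cancellation to a statement about an invariant decomposition of the full $4\times 4$ map. The continuity-of-roots step you sketch at the end is exactly what the paper invokes once the asymptotic polynomial is in hand.
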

 \begin{proof}
 The numerical scheme associated with $\Theta_{\delta,\tau}$ can be written as
\begin{eqnarray}\label{hsgjhdgjhsgd}
    & \begin{bmatrix} y_{n+1} \\ x_{n+1} \\ (p_y)_{n+1} \\ (p_x)_{n+1} \end{bmatrix} = T \begin{bmatrix} y_n \\ x_n \\ (p_y)_n \\ (p_x)_n \end{bmatrix}
\end{eqnarray}
with
\[
T=\begin{bmatrix} 1 & 0 & \delta-\tau & 0 \\ 0 & 1 & 0 & \delta-\tau \\ 0 & 0 & 1 & 0 \\ 0 & 0 & 0 & 1 \end{bmatrix}
    \begin{bmatrix} 1 & 0 & 0 & 0 \\ 0 & 1 & 0 & 0 \\ \tau-\delta & 0 & 1 & 0 \\ 0 & 0 & 0 & 1 \end{bmatrix}
    \begin{bmatrix} 1 & 0 & \tau & 0 \\ 0 & 1 & 0 & \tau \\ 0 & 0 & 1 & 0 \\ 0 & 0 & 0 & 1 \end{bmatrix}
    \begin{bmatrix} 1 & 0 & 0 & 0 \\ 0 & 1 & 0 & 0 \\ -\tau(\omega^2+1) & \tau\omega^2 & 1 & 0 \\ \tau\omega^2 & -\tau\omega^2 & 0 & 1 \end{bmatrix} \nonumber\\
\]
The characteristic polynomial of $T$ is
\begin{align}
    &\lambda^4+ (-4+\delta^2-\delta^2\tau^2+2\delta\tau^3-\tau^4+2\delta\tau\omega^2-\delta^2\tau^2\omega^2+2\delta\tau^3\omega^2-\tau^4\omega^2)\lambda^3+(6-2\delta^2 \nonumber\\
    &+2\delta^2\tau^2-4\delta\tau^3+2\tau^4-4\delta\tau\omega^2+\delta^3\tau\omega^2+2\delta^2\tau^2\omega^2-4\delta\tau^3\omega^2-\delta^3\tau^3\omega^2+2\tau^4\omega^2\nonumber\\ &+2\delta^2\tau^4\omega^2-\delta\tau^5\omega^2) \lambda^2 \nonumber\\
    &+(-4+\delta^2-\delta^2\tau^2+2\delta\tau^3-\tau^4+2\delta\tau\omega^2-\delta^2\tau^2\omega^2+2\delta\tau^3\omega^2-\tau^4\omega^2)\lambda+1
\end{align}
Since $\omega \gg 1$, $\tau \ll 1/\omega^2$, as long as $\delta \lesssim 1$ roots to the above polynomial are close to roots to the asymptotic polynomial
\begin{equation}
    \lambda^4+(\delta^2-4)\lambda^3+(6-2\delta^2)\lambda^2+(\delta^2-4)\lambda+1
\end{equation}
which can be shown to be $1$ with multiplicity 2 and $\frac{1}{2}(2-\delta^2 \pm \delta\sqrt{\delta^2-4})$.
It is easy to see that all roots are complex numbers with moduli less or equal to one if and only if $|\delta|\leq 2$.

The numerical scheme associated with $\Theta^a_{\delta,\tau}$ can be written as in \eref{hsgjhdgjhsgd} with
\begin{eqnarray}
 T = \begin{bmatrix} 1 & 0 & \frac{\delta-\tau}{2} & \frac{\delta-\tau}{2} \\ 0 & 1 & \frac{\delta-\tau}{2} & \frac{\delta-\tau}{2} \\ 0 & 0 & 1 & 0 \\ 0 & 0 & 0 & 1 \end{bmatrix}
    \begin{bmatrix} 1 & 0 & 0 & 0 \\ 0 & 1 & 0 & 0 \\ -\tau\omega^2 & \tau\omega^2 & 1 & 0 \\ \tau\omega^2 & -\tau\omega^2 & 0 & 1 \end{bmatrix}
    \begin{bmatrix} 1 & 0 & \tau & 0 \\ 0 & 1 & 0 & \tau \\ 0 & 0 & 1 & 0 \\ 0 & 0 & 0 & 1 \end{bmatrix}
    \begin{bmatrix} 1 & 0 & 0 & 0 \\ 0 & 1 & 0 & 0 \\ -\delta & 0 & 1 & 0 \\ 0 & 0 & 0 & 1 \end{bmatrix}
\end{eqnarray}
The characteristic polynomial of $T$ is
\begin{equation}
    2\lambda^4+(4\omega^2\tau^2+\tau\delta+\delta^2-8)\lambda^3+(12-2\delta^2-2\delta\tau-8\tau^2\omega^2+2\delta^2\tau^2\omega^2)\lambda^2+(4\omega^2\tau^2+\tau\delta+\delta^2-8)\lambda+2
\end{equation}
Since $\omega \gg 1$, $\tau \ll 1/\omega$, as long as $\delta \lesssim 1$ roots to the above polynomial are close to roots to the asymptotic polynomial
\begin{equation}
    2\lambda^4+(\delta^2-8)\lambda^3+(12-2\delta^2)\lambda^2+(\delta^2-8)\lambda+1
\end{equation}
which can be shown to be $1$ with multiplicity 2 and $\frac{1}{4}(4-\delta^2 \pm \delta\sqrt{\delta^2-8})$. All roots are complex numbers with moduli less or equal to one if and only if $|\delta|\leq 2\sqrt{2}$
\end{proof}

Figures \ref{error4nonintrusive} and \ref{error4artificial} illustrate the domain of stability of nonintrusive FLAVOR  (based on symplectic Euler \eref{jhjhgjhgjg87g87A} and \eref{lsjfaijissdsdedlfskjlkd}) and artificial FLAVOR \eref{jhjhgjhgjg87sdedg87A} applied to the flow of \eref{erroranalysis_example2}, i.e. values of $\delta$ and $\tau/\epsilon$ ensuring stable numerical integrations. We observe that artificial FLAVOR has a much larger stability domain than nonintrusive FLAVOR. Specifically, for nonintrusive FLAVOR and large values of $\delta$, $\tau = o(\sqrt{\epsilon})$ is not enough and one needs $\tau = o(\epsilon)$ for a stable integration, whereas artificial FLAVOR only requires $\tau = \sqrt{2\epsilon}$, a minimum requirement for a stable symplectic Euler integration of the fast dynamics.

Notice that there is no resonance behavior in terms of stability; everything below the two curves is stable and everything outside is not stable (plots not shown).

\begin{figure}
\centering
\subfigure[Error of nonintrusive FLAVOR as a function of $\delta$ and $\tau/\sqrt{\epsilon}$. Notice that not all pairs of step lengths lead to stable integrations.]{
\includegraphics[width=0.45\textwidth]{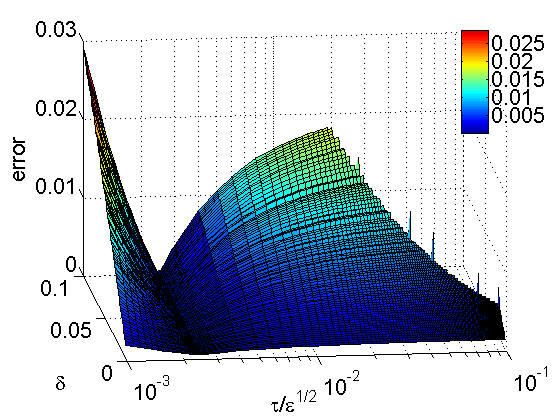}
\label{error1nonintrusive}
}
~
\subfigure[Error of artificial FLAVOR as a function of $\delta$ and $\tau/\sqrt{\epsilon}$]{
\includegraphics[width=0.45\textwidth]{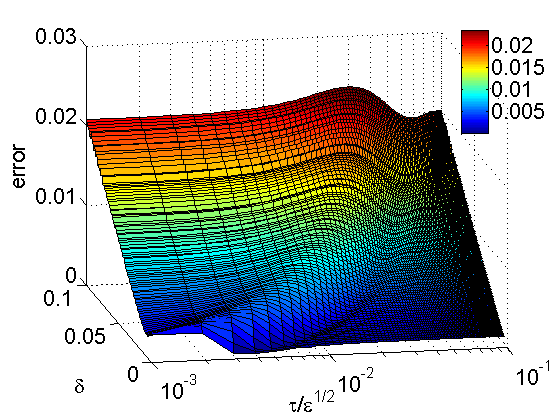}
\label{error1artificial}
}

\subfigure[Optimal $\tau/\sqrt{\epsilon}$ and error of nonintrusive FLAVOR as functions of $\delta$]{
\includegraphics[width=0.45\textwidth]{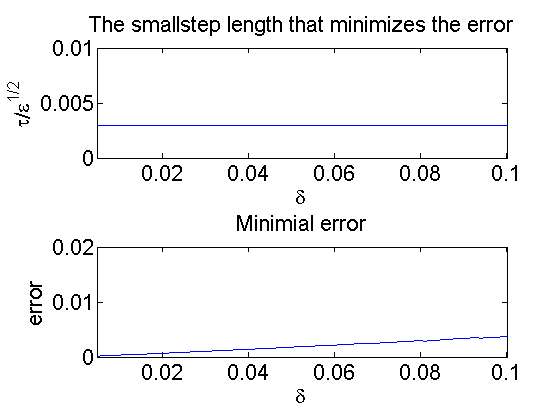}
\label{error2nonintrusive}
}
~
\subfigure[Optimal $\tau/\sqrt{\epsilon}$ and error of artificial FLAVOR as functions of $\delta$]{
\includegraphics[width=0.45\textwidth]{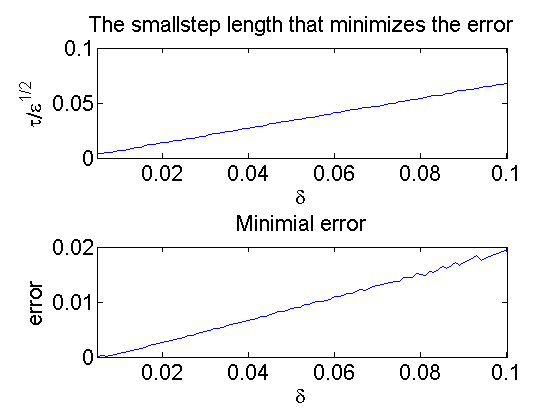}
\label{error2artificial}
}

\subfigure[Error dependence on $\tau/\sqrt{\epsilon}$ for a given $\delta$: nonintrusive FLAVOR]{
\includegraphics[width=0.45\textwidth]{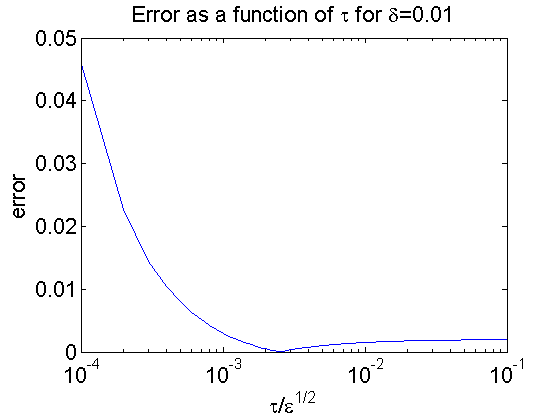}
\label{error3nonintrusive}
}
~
\subfigure[Error dependence on $\tau/\sqrt{\epsilon}$ for a given $\delta$: artificial FLAVOR]{
\includegraphics[width=0.45\textwidth]{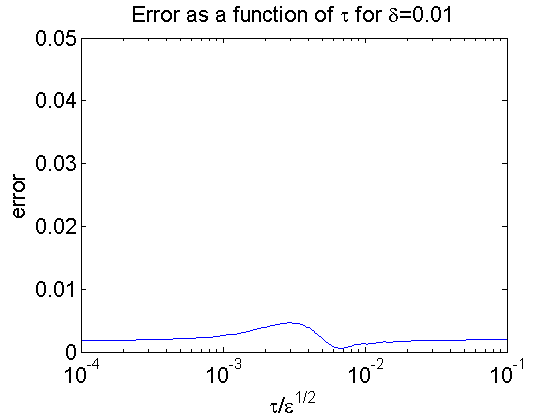}
\label{error3artificial}
}
\caption{\footnotesize Error analysis of \eqref{erroranalysis_example2}. Parameters are $\omega=\sqrt{\epsilon}=10^3$, $x(0)=0.8$ and $y(0)=x(0)+1.1/\omega$.}
\end{figure}

\subsection{Error analysis}\label{sjhgdkjhgsdjh2}
The flow of \eref{erroranalysis_example2} has been explicitly computed and compared with solutions obtained from nonintrusive FLAVOR based on symplectic Euler (\eref{jhjhgjhgjg87g87A} and \eref{lsjfaijissdsdedlfskjlkd}) and with artificial FLAVOR \eref{jhjhgjhgjg87sdedg87A}.

The total simulation time is $T=10$, and absolute errors on the slow variable have been computed with respect to the Euclidean norm of the difference in positions between analytical and numerical solutions. Stability is investigated using the same technique used in Subsection \ref{kjkjdhkdjhhkdhlala}.
Figures \ref{error1nonintrusive} and \ref{error1artificial} illustrate  errors as functions of mesostep $\delta$ and renormalized small step $\tau/\epsilon$. Observe that given $\delta$  errors are minimized at specific values of $\tau/\epsilon$ for both integrators, but the accuracy of nonintrusive FLAVOR is less sensitive to $\tau/\epsilon$. Figures \ref{error2nonintrusive} and \ref{error2artificial} plot the optimal value of $\tau/\epsilon$ as a function of $\delta$ and the associated to error.
Observe also that for nonintrusive FLAVOR the dependence of the optimal value of $\tau/\epsilon$   on $\delta$ is weak, whereas for artificial FLAVOR the optimal value of $\tau/\epsilon$ roughly scales linearly with $\delta$. Figure \ref{error3nonintrusive} and \ref{error3artificial} describe how error changes with smallstep $\tau$ for mesostep $\delta$ fixed. Figure \ref{error3nonintrusive} can be viewed in correspondence with the condition $\delta<<\tau/\epsilon$ required for accuracy. This requirement, however, is just a sufficient condition to obtain an error bound, as we can see in Figure \ref{error3artificial}. There the weak dependence of the error on $\tau/\epsilon$ for a fixed $\delta$ shows that one
does not have to choose the microstep with too much care or optimize the integrator with respect to its value, if artificial FLAVOR is used. As a matter of fact, all the numerical experiments illustrated in this paper (except for Figures \ref{error2nonintrusive} and \ref{error2artificial}) have been performed without any tuning of the value $\tau/\epsilon$. We have simply used the rule of thumb $\delta \sim \gamma \frac{\tau}{\epsilon}$  where $\gamma$ is a small parameter ($0.1$ for instance).

Therefore, it appears that the benefits of  artificial FLAVORS lie in their superior accuracy and stability.

Notice that there is no resonant value of $\delta$ or $\tau$.

\begin{figure} [ht]
\begin{tabular}{c}
{\resizebox{\imsizebig}{!}{\includegraphics{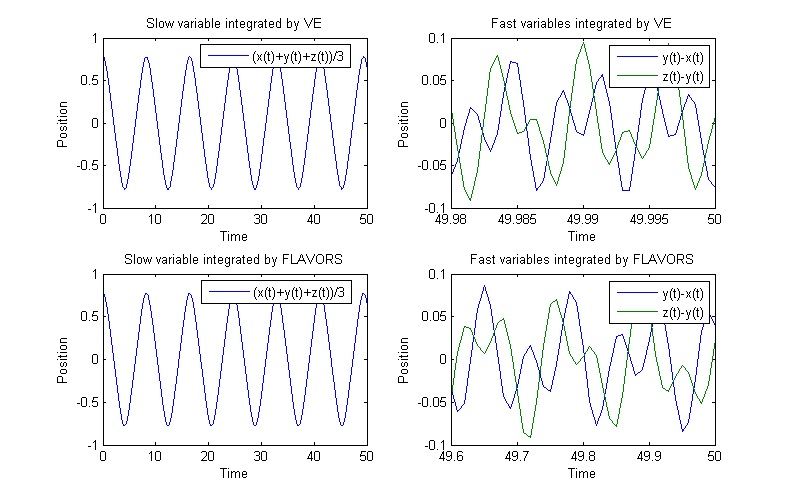}}}
\end{tabular}
\caption{\footnotesize Comparison between trajectories integrated by Variational Euler  and FLAVOR (defined by \eref{jhjhgjhgjg87g87A} and \eref{lsjfaijissdsdedlfskjlkd}).
FLAVOR uses mesostep $\delta=0.01$  and  microstep $\tau=0.0005$, and Symplectic Euler uses time step $\tau=0.0005$. Time axes in the right column are zoomed in (by different ratios) to illustrate the fact that fast variables are captured in the sense of measure.
FLAVOR accelerated the computation by roughly 20x ( $\delta=20\tau$).
In this experiment $\epsilon=10^{-6}$, $\omega_1=1.1$, $\omega_2=0.97$, $x(0)=0.8$, $y(0)=0.811$, $z(0)=0.721$, $p_x(0)=0$, $p_y(0)=0$ and $p_z(0)=0$. Simulation time $T=50$.
}
\label{example2result}
\end{figure}

\begin{figure} [h]
\centering
\subfigure[Asymptotically linear error dependence on $\delta=1/M$]{
\includegraphics[width=0.29\textwidth]{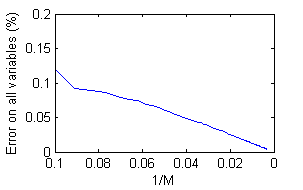}
\label{example2error_M}
}
\quad
\subfigure[Asymptotically linear error dependence on total simulation time $T$]{
\includegraphics[width=0.29\textwidth]{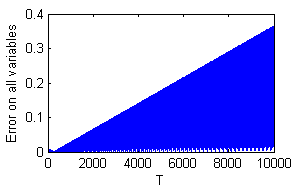}
\label{example2error_T}
}
\quad
\subfigure[Asymptotically independent of the scaling factor $\omega$]{
\includegraphics[width=0.29\textwidth]{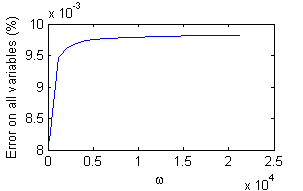}
\label{example2error_w}
}
\caption{\footnotesize Error dependence on parameters in a FLAVOR simulation of \eqref{erroranalysis_example}}
\end{figure}

\subsection{Numerical error analysis for nonlinear systems}\label{sjhgdkjhgsdjh}
In this subsection, we will consider the nonlinear Hamiltonian system
\begin{equation}
    H(x,y,z,p_x,p_y,p_z)=\frac{1}{2}p_x^2+\frac{1}{2}p_y^2+\frac{1}{2}p_z^2+x^4+\epsilon^{-1}\frac{\omega_1}{2}(y-x)^2
    +\epsilon^{-1}\frac{\omega_2}{2}(z-y)^2
    \label{erroranalysis_example}
\end{equation}
Thus, the potential is $U=\frac{\omega_1}{2}(y-x)^2+\frac{\omega_2}{2}(z-y)^2$ and $V=x^4$. Here $\frac{x+y+z}{3}$ acts as a slow degree of freedom and $y-x$ and $z-y$ act as fast degrees of freedom.

Figure \ref{example2result} illustrates $t \mapsto \frac{x(t)+y(t)+z(t)}{3}$ (slow variable, convergent strongly) and $t \mapsto (y(t)-x(t),z(t)-y(t))$ (fast variables, convergent in measure) computed with symplectic Euler and with the induced symplectic FLAVOR \eref{jhjhgjhgjg87g87A}).
Define $q:=(x,y,z)$.
To illustrate the $F$-convergence property of FLAVOR, we fix $H=1$, vary the mesostep $\delta=H/M$ by changing $M$ and show the Euclidean norm error of the difference between  $\frac{1}{M}\sum_{i=0}^{M-1} q(T-ih/M)$ computed with FLAVOR and computed with symplectic Euler in Figure \ref{example2error_M}. Notice that without an averaging over time length $h$, the error will be no longer monotonically but oscillatorily decreasing as $\delta$ changes (plots not shown), because fast variables are captured only in the sense of measure. As shown in Figure \ref{example2error_M} the error scales linearly with $\frac{1}{M}$ for $M$ not too small, and therefore the global error is a linear function of the mesostep $\delta$ and the method is first order convergent. Figure \ref{example2error_T} shows that the error in general grows linearly with the total simulation time, and this linear growth of the error has been observed  for a simulation time larger than $\omega$ ($\epsilon^{-1/2}$). Figure \ref{example2error_w} shows that the error does not depend on $\omega$ ( $\epsilon^{-1/2}$) for a fixed $\delta$, as long as  $\epsilon$ is not too large (i.e. $\omega$ not too small); the error is instead controlled by $M$. This is  not caused by reaching the limit of machine accuracy, it is a characteristic of the method: the plateau for large $\omega$ corresponds to the complete scale separation regime of FLAVOR as a multiscale method.

Notice that there is no resonant value of $\delta$ in the sense of convergence.

The fact that the error scales linearly with total simulation time is a much stronger (numerical) result than our (theoretical) error analysis for FLAVORS (in which the error is bounded by a term growing exponentially with the total simulation time). We conjecture that the linear growth of the error is a consequence of the fact that FLAVOR is symplectic and is only true for a subclass of systems, possibly integrable systems. A rigorous analysis of the effects of the structure preservation of FLAVORS on long term behavior remains to be done.

\begin{figure} [h]
\begin{tabular}{c}
{\resizebox{\imsizebig}{!}{\includegraphics{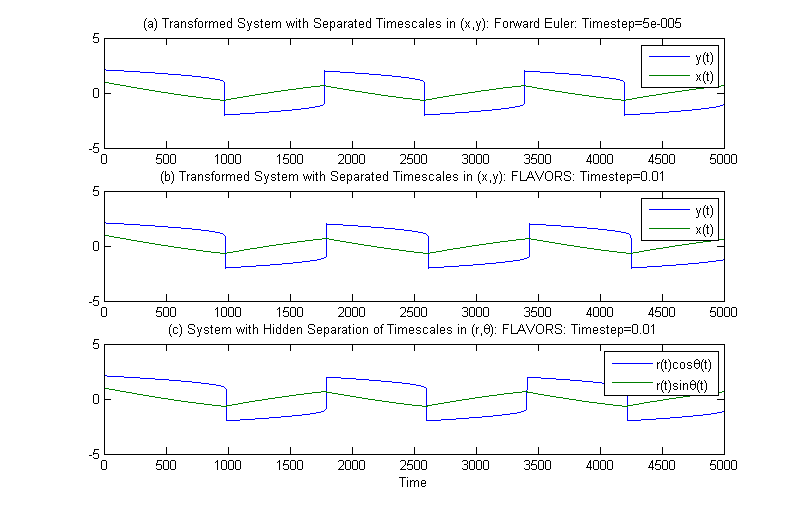}}}
\end{tabular}
\caption{\footnotesize Over a timespan of $5/\epsilon$ (a) Direct Forward Euler simulation of \eref{eq2} with time steps resolving the fast time scale (b) (nonintrusive \eref{ksjhjhhjdskdjjhhuhuuwue}) FLAVOR simulation of \eref{eq2} (c) Polar to cartesian image of the (nonintrusive \eref{ksjhjhhjdskdjjhhuhuuwue}) FLAVOR simulation of \eref{eq1} with hidden slow and fast variables. Forward Euler uses time step $h=0.05 \epsilon=0.00005$. The two FLAVORS simulations use $\delta=0.01$ and $\tau=0.00005$. Parameters are $\frac{1}{\epsilon}=1000$, $x(0)=1$, $y(0)=1$}
\label{vanderPol}
\end{figure}

\section{Numerical experiments}
\subsection{Hidden  Van der Pol oscillator (ODE)}
Consider the following system ODEs
\begin{equation}\label{eq1}
    \begin{cases}
        \dot{r}=\frac{1}{\epsilon} (r\cos\theta+r\sin\theta-\frac{1}{3}r^3\cos^3\theta)\cos\theta - \epsilon\, r\cos\theta\sin\theta \\
        \dot{\theta}=-\epsilon \,cos^2\theta-\frac{1}{\epsilon}(\cos\theta+\sin\theta-\frac{1}{3}r^2\cos^3\theta)\sin\theta
    \end{cases}
\end{equation}
where $\epsilon \ll1$.
Taking the transformation from polar coordinates to Cartesian coordinates by $[x,y]=[r\sin\theta,r\cos\theta]$ as the local diffeomorphism, we obtained the hidden system:
\begin{equation}
    \begin{cases}\label{eq2}
        \dot{x}=-\epsilon y \\
        \dot{y}=\frac{1}{\epsilon}(x+y-\frac{1}{3}y^3)
    \end{cases}
\end{equation}
Taking the second time derivative of $y$,  the system can also be written as the $2^{nd}$-order ODE:
\begin{equation}
    \ddot{y}+y=\frac{1}{\epsilon} (1-y^2)\dot{y} .
\end{equation}
The latter is a classical  Van der Pol oscillator \cite{Verhulst:96}. Nonintrusive FLAVOR as defined by \eref{ksjhjhhjdskdjjhhuhuuwue} can be directly applied to \eref{eq1} (with hidden slow and fast processes) by turning on and off the stiff parameter $\frac{1}{\epsilon}$. More precisely, defining  $\Phi^{\epsilon,\alpha}(r,\theta)$ by
\begin{equation}\label{eqff1}
\Phi^{\alpha, \epsilon}_h(r,\theta):=\begin{pmatrix}r\\\theta \end{pmatrix}+ \alpha h \begin{pmatrix}(r\cos\theta+r\sin\theta-\frac{1}{3}r^3\cos^3\theta)\cos\theta\\-  (\cos\theta+\sin\theta-\frac{1}{3}r^2\cos^3\theta)\sin\theta\end{pmatrix}-\epsilon h \begin{pmatrix}r\cos\theta\sin\theta\\  cos^2\theta\end{pmatrix}
\end{equation}
FLAVOR is defined by \eref{ksjhjhhjdskdjjhhuhuuwue} with $\bar{u}:=(\bar{r},\bar{\theta})$, i.e.,
\begin{equation}\label{ksjhjhhjdsehhuhuuwue}
(\bar{r}_t,\bar{\theta}_t)=\big(\Phi^{0,\epsilon}_{\delta-\tau}\circ \Phi^{\frac{1}{\epsilon},\epsilon}_{\tau}\big)^k(r_0,\theta_0) \quad \text{for}\quad k\delta \leq t <(k+1)\delta .
\end{equation}
We refer to Figure \ref{vanderPol} for a comparison of integrations by Forward Euler, used as a benchmark, and FLAVORS. FLAVORS gives trajectories close to Forward Euler  and correctly captures the $\mathcal{O}(\frac{1}{\epsilon})$ period \cite{Verhulst:96} of the relaxation oscillation. Moreover, a 200x acceleration is achieved using FLAVOR.

\begin{figure} [h]
\begin{tabular}{c}
{\resizebox{\imsizebig}{!}{\includegraphics{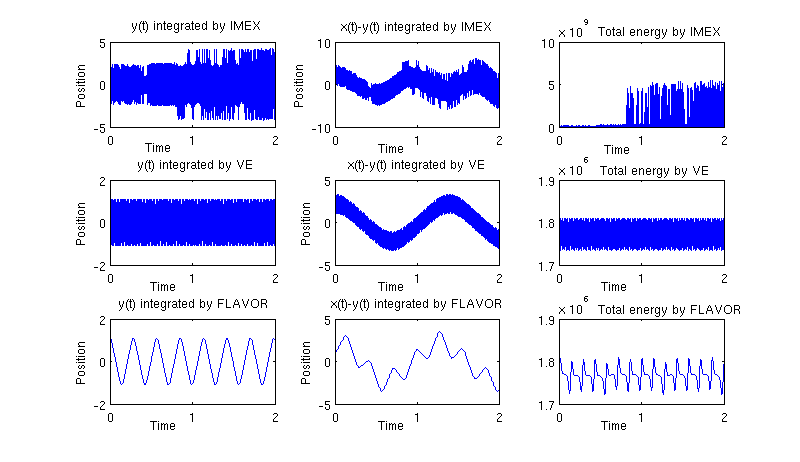}}}
\end{tabular}
\caption{\footnotesize
In this experiment, $\epsilon=10^{-6}$, $y(0)=1.1$, $x(0)=2.2$, $p_y(0)=0$ and $p_x(0)=0$. Simulation time $T=2$. FLAVOR (defined by \eref{jhjhgjhgjg87g87A} and \eref{lsjfaijissdsdedlfskjlkd}) uses mesostep $\delta=10^{-3}$  and microstep $\tau=10^{-5}$, Variational Euler uses small time step $\tau=10^{-5}$, and IMEX uses mesostep $\delta=10^{-3}$.
Since the fast potential  is nonlinear, IMEX is an implicit method and nonlinear equations have to be solved at every step, and  IMEX turns out to be slower than Variational Euler. FLAVOR is strongly accurate with respect to slow variables and accurate in the sense of measures with respect to fast variables. Comparing to Symplectic Euler, FLAVOR accelerated the computation by roughly 100x.
}
\label{example1result}
\end{figure}

\subsection{Hamiltonian system with nonlinear stiff and soft potentials}\label{kjkjdhkdjhhkdh}
In this subsection, we will apply the Symplectic Euler FLAVOR defined by \eref{jhjhgjhgjg87g87A} and \eref{lsjfaijissdsdedlfskjlkd}
to the mechanical system whose Hamiltonian is
\begin{equation}
    H(y,x,p_y,p_x):=\frac{1}{2}p_y^2+\frac{1}{2}p_x^2+\epsilon^{-1}y^6+(x-y)^4
\end{equation}
Here, stiff potential $\epsilon^{-1}U=\epsilon^{-1}y^6$ and soft potential $V=(x-y)^4$ are both nonlinear.

Figure \ref{example1result} illustrates $t\mapsto y(t)$ (dominated by a fast process), $t\mapsto x(t)-y(t)$ (a slow process modulated by a fast process), and $t\mapsto H(t)$ computed with: Symplectic Euler, the induced symplectic FLAVOR (\eref{jhjhgjhgjg87g87A} and \eref{lsjfaijissdsdedlfskjlkd}), and IMEX \cite{Stern:09}. Notice that $x-y$ is not a purely slow variable but contains some fast component, and therefore the FLAVOR integration of it contains a modulation of local oscillations, which could be interpreted as that fast component slowed down by FLAVOR. It's not easy to find a purely slow variable or a purely fast variable in the form of \eref{kfgfgdiuuiusedejhd} for this example, but the integrated trajectory for such a slow variable will not contain these slowed-down local oscillations.

\begin{figure} [h]
\begin{tabular}{c}
{\resizebox{\imsizebig}{!}{\includegraphics{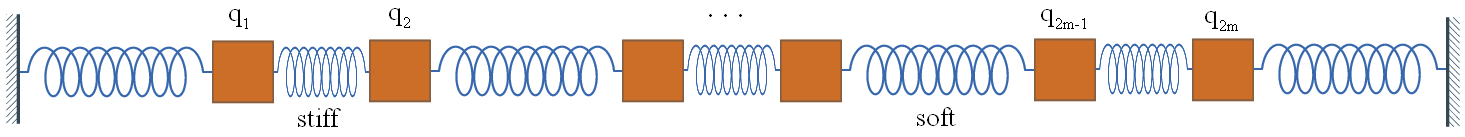}}}
\end{tabular}
\caption{\footnotesize Fermi-Pasta-Ulam problem \cite{FPU:55} -- 1D chain of alternatively connected harmonic stiff and non-harmonic soft springs}
\label{FPUfigure}
\end{figure}

\begin{figure}
\centering
\subfigure[By Variational Euler with  small time step $\tau'=5\times 10^{-5}=0.05/\omega$. 38 periods in Subplot2 with zoomed-in time axis ($\sim$380 in total over the whole simulation span).]{
\includegraphics[width=0.4\textwidth]{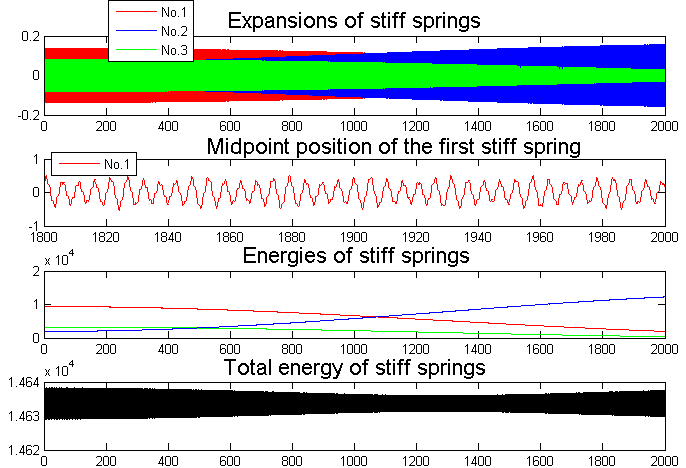}
\label{FPU_VE}
}
\subfigure[By artificial FLAVOR \eref{jhjhgjhgjg87sdedg87A} with  mesostep  $\delta=0.002$ and  microstep $\tau=10^{-4}=0.1/\omega$. 38 periods in Subplot2 with zoomed-in time axis ($\sim$380 in total over the whole simulation span).]{
\includegraphics[width=0.4\textwidth]{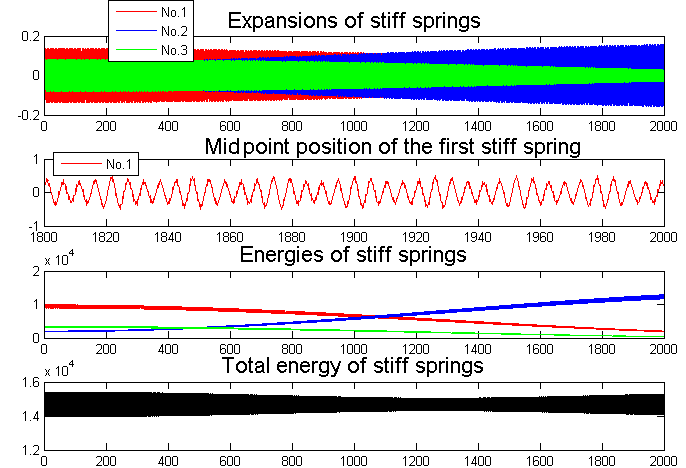}
\label{FPU_FLAVORS}
}
\caption{\footnotesize Simulations of the FPU problem over $T=2\omega$. Subplot2 of both figures have zoomed-in time axes so that whether phase lag or any other distortion of trajectory exists could be closely investigated. In this experiment $m=3$, $\omega=10^3$, $x(0)=[0.4642,-0.4202,0.0344,0.1371,0.0626,0.0810]$ is randomly chosen and $y(0)=[0,0,0,0,0,0]$.}
\label{FPU_short}
\end{figure}

\subsection{Fermi-Pasta-Ulam problem}\label{FPUsec}
In this subsection, we will consider the Fermi-Pasta-Ulam (FPU) Problem \cite{FPU:55} illustrated by Figure \ref{FPUfigure} and associated with the Hamiltonian
\begin{equation}
    H(q,p):=\frac{1}{2} \sum_{i=1}^m (p_{2i-1}^2+p_{2i}^2)+\frac{\omega^2}{4} \sum_{i=1}^m (q_{2i}-q_{2i-1})^2+ \sum_{i=0}^m (q_{2i+1}-q_{2i})^4.
\end{equation}
The FPU problem is a well known benchmark problem \cite{FPUcomp07, Hairer:04} for multiscale integrators  because it exhibits different behaviors over widely separated timescales. The stiff springs nearly behave  like harmonic oscillators with period $\sim \mathcal{O}(\omega^{-1})$. Then, the centers of masses linked by stiff springs (i.e., the midpoints of stiff springs) change over a timescale $\mathcal{O}(1)$. The third timescale, $\mathcal{O}(\omega)$, is associated with the rate of energy exchange between stiff springs. The fourth timescale, $\mathcal{O}(\omega^{2})$, corresponds to the synchronization of energy exchange between stiff springs. On the other hand, the total energy of the stiff springs behaves almost like a constant. This wide separation of timescales can be seen in Figure \ref{FPU_short}, \ref{FPU_long}, and \ref{FPU_quadratic_exact_long}, where four subplots address different scales: Subplot1 shows the fast variables $(q_{2i}-q_{2i-1})/\sqrt{2}$; Subplot2 shows one of the slow variables $(q_{2}+q_{1})/\sqrt{2}$; Subplot3 shows the energy transfer pattern among stiff springs, which is even slower; Subplot4 shows the near-constant total energy of three stiff springs. All four subplots are time-series. A comprehensive survey on FPU problem, including discussions on timescales and numerical recipes, can be found in \cite{Hairer:04}.

Figures \ref{FPU_VE} and \ref{FPU_FLAVORS} compare symplectic Euler (with time steps fine enough to resolve FPU over the involved long time scale) and with the  artificial FLAVOR \eref{jhjhgjhgjg87sdedg87A}. On a timescale $\mathcal{O}(\omega)$ ($\omega \gg 1$), FLAVOR  captured slow variable's periodic behavior with the correct period and phase, as well as the slower process of energy transfer.  At the same time, FLAVOR accelerated the computation by roughly 40x (since $\delta=40\tau'$).

It is not worrisome that artificial FLAVOR produces stiff spring energy trajectories with rapid local oscillations, which exhibit in both thicker individual energy curves and total energy with larger variance. In fact, these local oscillations do not seem to affect the global transfer pattern nor its period and are caused by the numerical error asociated with microstep $\tau$. This can be inferred by using the artificial FLAVOR introduced in Subsection \ref{gjsdgjhdhjhwg} with  $\theta^\epsilon_\tau$ corresponding to the exact flow of $H^{fast}$ (rather than its Variational Euler approximation: this specific artificial Euler resembles the Impulse Method, but the Impulse Method will yield unbounded trajectories if one runs even longer time simulations, whereas FLAVORS do not seem to have an error growing exponentially with the total simulation time). As illustrated in Figure \ref{FPU_FLAVORS_exactfast}, exact flow helps to obtain thin energy curves of stiff springs with no rapid local oscillations as well as a total energy with a variance smaller than that given by fine Variational Euler (Figure \ref{FPU_VE}), with the transfer pattern similar to Figure \ref{FPU_FLAVORS}.

Now, we reach further to $\mathcal{O}(\omega^2)$ total integration time to investigate different integrators' performances in capturing the synchronized energy exchange pattern (Figure \ref{FPU_long}).

\begin{figure}[t]
\centering
\subfigure[By Velocity Verlet with small time step $h=10^{-5}$.]{
\includegraphics[ width=0.47\textwidth]{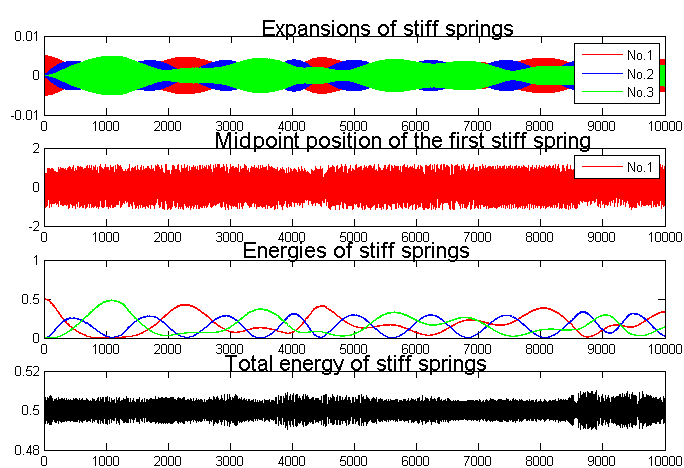}
\label{FPU_VV_long}
}
~
\subfigure[By artificial FLAVOR \eref{jhjhgjhgjg87sdedg87A} with mesostep $\delta=0.002$ and microstep $\tau=0.0005=0.1/\omega$.]{
\includegraphics[width=0.47\textwidth]{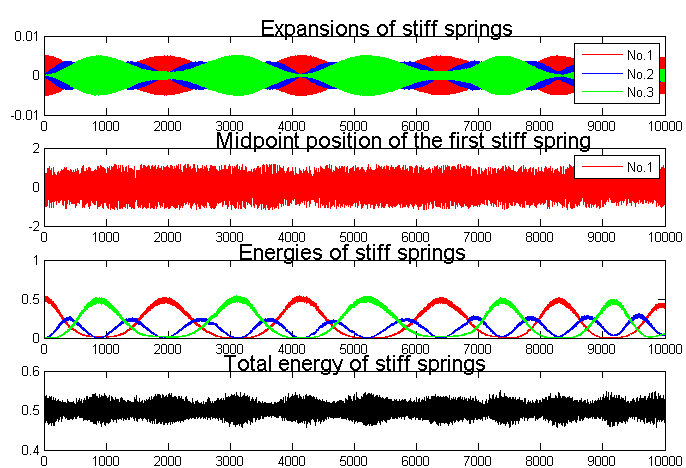}
\label{FPU_FLAVORS_long}
}

\subfigure[By IMEX with mesostep $\delta=0.002$.]{
\includegraphics[width=0.47\textwidth]{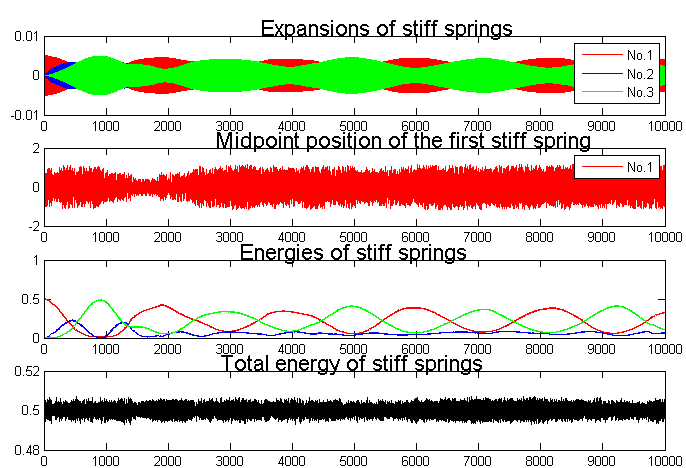}
\label{FPU_IMEX_long}
}
~
\subfigure[By Impulse Method with mesostep $\delta=0.002$.]{
\includegraphics[width=0.47\textwidth]{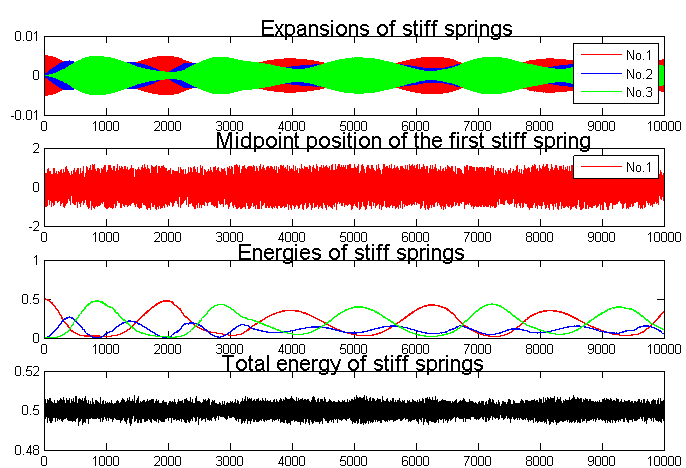}
\label{FPU_Impulse_long}
}
\caption{\footnotesize Simulations of FPU problem over $T=\frac{1}{4}\omega^2$. Initial conditions are $x(0)=[1,0,0,1/\omega,0,0]$ and $y(0)=[0,0,0,0,0,0]$ so that energy starts concentrated on the leftmost soft and stiff springs, propagates to the right, bounces back, and oscillates among springs. We chose a smaller $\omega=200$ because with a larger $\omega$ it would take weeks to run Velocity Verlet  on a laptop.}
\label{FPU_long}
\end{figure}

There is a significant difference among stiff spring energy transfer patterns produced by Velocity Verlet, FLAVOR, IMEX and the Impulse Method.
Here, there is no analytic solution or provably accurate method for comparison. FLAVOR is the only method that shows periodic behavior on the long time scale and convergence tests show that FLAVOR's trajectories remain stable under small variations of step sizes.
Notice that mathematically it can be shown that the dynamical system admits periodic orbits (for example, by Poincar\'{e}-Bendixson theorem). Furthermore, it is physically intuitive that the three stiff springs should alternatively obtain their maximal and minimal energies, and these maximal energies should be of fixed values.  In addition, if we change the slow potential to be quadratic the system is still very similar to non-harmonic FPU but now analytically solvable. There, the energy exchanging pattern (Figure \ref{FPU_quadratic_exact_long}) resembles FLAVORS' result of the non-harmonic system but not the other integrators'. Notice that if run on the modified quadratic FPU problem however, FLAVORS, Velocity-Verlet, IMEX and the Impulse Method all obtain perfect results (plots omitted). These are numerical evidences supporting FLAVOR on the $\mathcal{O}(\omega^2)$ timescale.

\begin{figure}[ht]
\begin{minipage}{0.5\linewidth}
\centering
\includegraphics[width=1.0\textwidth]{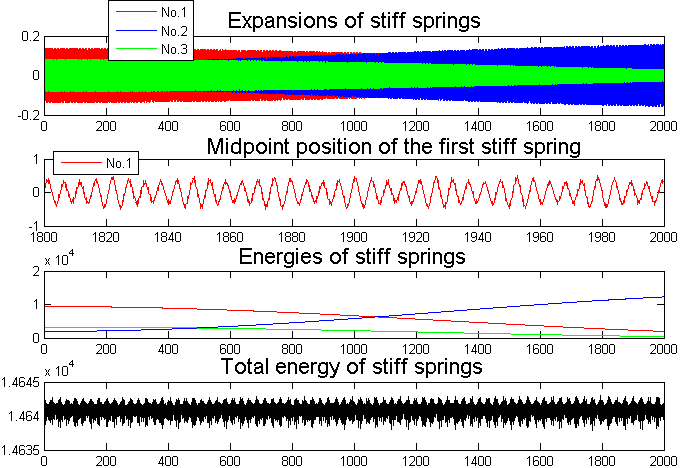}
\caption{\footnotesize By artificial FLAVORS (Subsection \ref{gjsdgjhdhjhwg}) based on exact fast flow with mesostep $\delta=0.002$ and  microstep $\tau=10^{-4}$. Less oscillatory stiff spring energies. 38 periods in Subplot2 with zoomed-in time axis ($\sim$380 in total over the whole simulation span). }
\label{FPU_FLAVORS_exactfast}
\end{minipage}
\hspace{0.2cm}
\begin{minipage}{0.5\linewidth}
\centering
\includegraphics[width=1.0\textwidth]{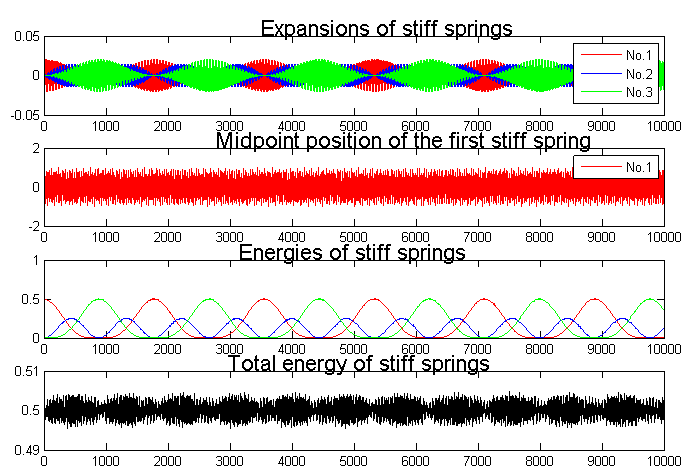}
\caption{\footnotesize Harmonic FPU, $T=50\omega$, exact solution}
\label{FPU_quadratic_exact_long}
\end{minipage}
\end{figure}

It is worth discussing why Velocity-Verlet with a time step much smaller than the characteristic length of the fast scale ($\mathcal{O}(1/\omega)$)is still not satisfactory. Being a second order method, it has an error bound of $\mathcal{O}(e^T h^2)$. On the other hand, backward error analysis guarantees that the energy of the integrated trajectory oscillates around the true conserved energy, hence eliminating the possibility of exponential growth of the numerical solution. Nevertheless, at this moment there is no result known to the authors to link these two analytical results to guarantee long term accuracy on the stiff springs' energies. The energy exchange among stiff springs is in fact an delicate phenomenon, and a slight distortion in stiff spring lengths could easily disrupt its period or even its periodicity.

These numerical observations seem to indicate that symplectic FLAVORS may have special long time properties. Specifically, although we could not quantify the error here because there is no benchmark to compare to when the total simulation time is $\mathcal{O}(\omega^2)$, the long term behavior seems to indicate an error growing much slower than exponentially (please refer to Remark \ref{exponentialError} for a discussion on exponential error bounds and Figure \ref{example2error_T} for another example of conjectured linear error growth). A rigorous investigation on FLAVORS' long time behavior remains to be done.
\begin{figure}
\centering
\subfigure[FLAVOR]{
\includegraphics[width=0.496\textwidth]{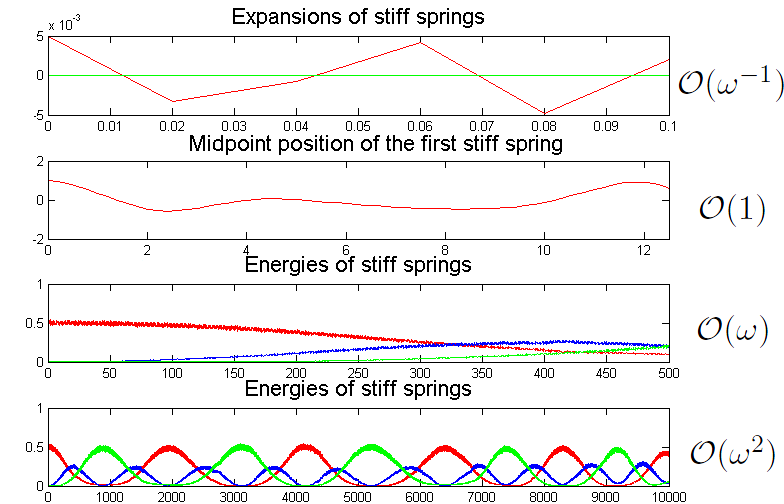}
}
~
\subfigure[Velocity Verlet]{
\includegraphics[width=0.434\textwidth]{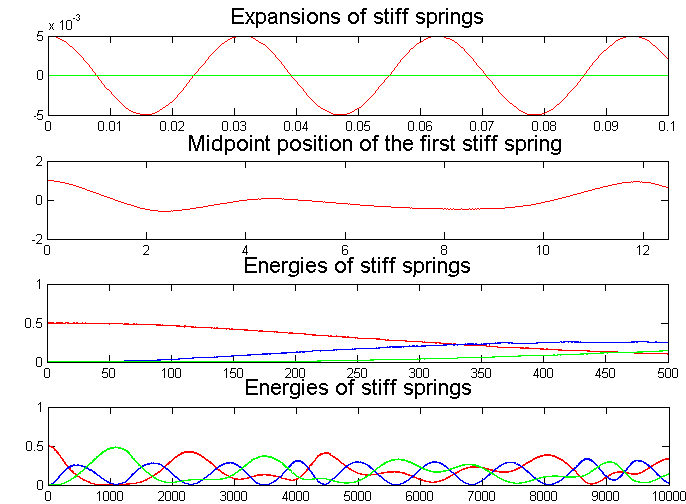}
}
\caption{\footnotesize Quantities of interest in integrations of FPU over different timescales. FLAVOR \eref{jhjhgjhgjg87sdedg87A}  captures the fastest timescale in the sense of measure, while Velocity Verlet cannot accurately capture the slowest ($\mathcal{O}(\omega^2)$) timescale despite of the small time step it uses. Here FLAVOR is 200 times faster than Velocity Verlet. All parameters are the same as in Figure \ref{FPU_VV_long} and \ref{FPU_FLAVORS_long}, e.g. $\omega=200$, $\delta=0.002$, $\tau=0.0005$ and $h=10^{-5}$.}
\label{FPU_timescales}
\end{figure}

Figure \ref{FPU_timescales} summarizes FLAVOR's performance on various timescales in a comparison to Velocity Verlet.

Notice that there are many sophisticated methods designed for integrating the FPU problem (see \cite{Hairer:04} for a review), as well as general multiscale methods that can be applied to the FPU problem. HMM as one state-of-art method in the latter category, together with  identification of slow variables \cite{Ariel:08} captured the energy transfer between stiff springs over a time span of the order of $\omega$. Simulations shown here are over a time span of the order of $\mathcal{O}(\omega^2)$.

\subsubsection{On resonances}
Multiscale in time integrators are usually plagued by two kinds of resonances.

The first kind, called Takens resonance \cite{Takens:80}, is related to the fact
there are no closed equations on slow variables \cite{MR1436164}.
FLAVORS  avoid Takens resonance because, thanks to $F$-convergence, the information on the local invariant measure of fast variables is not lost. Observe that
the FPU problem has Takens resonance (the eigenfrequencies of the strong potential are identical).  Nevertheless, FLAVORS still capture the solution trajectories given any large value of $\omega$  with mesostep $\delta \gg 1/\omega$ independent of $\omega$.

The second kind \cite{CalvoSena09} is related to instabilities created by interactions between parameters $\epsilon$, $\tau$ and $\delta$. For instance, if $\epsilon^{-1}=\omega^2$ resonance might happen at $\omega\delta$ or $\omega\tau$ equal to multiples of $\pi/2$. The analysis provided in Section \ref{kjkjdhkdjhhkdhlalala} shows that such unstable interaction does not occur, either in the sense of stability or in terms of peaks of error function. This can be intuitively understood by observing that FLAVORS never approximate $\cos(\delta \omega)$, while on the other hand, it does approximate $\cos(\tau \omega)$ whose resonance frequency $\tau=2\pi/\omega$ is ruled off by the requirement of $\tau \ll \epsilon$ for nonintrusive FLAVOR and $\tau \ll \sqrt{\epsilon}$ for artificial FLAVOR.

\subsection{Nonlinear 2D primitive molecular dynamics}\label{Prisec}

Now consider a two-dimensional, two degrees of freedom example in which a point mass is linked through a spring to a massless fixed hinge at the origin. While the spring as well as the point mass are allowed to rotate around the hinge (the spring remains straight), the more the spring-mass tilts away from its equilibrium angle the more restorative force it will experience. This example is a simplified version of prevailing molecular dynamics models, in which bond lengths and angles between neighboring bonds are both spring-like; other potential energy terms are ignored.

Denote by $x$ and $y$ the Euclidean coordinates of the mass, and $p_x$, $p_y$ corresponding momentums. Also introduce polar coordinates $(r,\theta)$, with $x=r\cos\theta$ and $y=r\sin\theta$. Then the Hamiltonian reads

\begin{align}\label{sdiudyiuyyur}
    H &=\frac{1}{2}p_x^2+\frac{1}{2}p_y^2+\frac{1}{2}\omega^2(r-r_0)^2+(\cos\theta)^2 \nonumber\\
      &=\frac{1}{2}p_x^2+\frac{1}{2}p_y^2+\frac{1}{2}\omega^2(\sqrt{x^2+y^2}-r_0)^2+\frac{x^2}{x^2+y^2}
\end{align}
where $r_0$ is equilibrium bond length parameter and $\omega$ is large number denoting bond oscillation frequency.

\begin{Remark} This seemingly trivial example is not easy to integrate.
\begin{enumerate}
\item  If the system is viewed in Euclidean coordinates $(x,y,p_x,p_y)$ it is completely nonlinear with a nonpolynomial potential, and hence the Impulse Method or its variations \cite{Grubmuller:91,Tuckerman:92,Skeel:99,Sanz-Serna:08}, or IMEX \cite{Stern:09}, or the homogenization method introduced in \cite{LeBris:07} cannot be applied using a mesostep.

\item If the Hamiltonian is rewritten  in generalized coordinates $(r,\theta,p_r,p_\theta)$,  $H=\frac{1}{2}p_r^2+\frac{1}{2}\frac{p_\theta^2}{r^2}+\frac{1}{2}\omega^2(r-r_0)^2+\frac{1}{2}\cos(\theta)^2$, a fast quadratic potential can be identified.

    However, the mass matrix $\begin{bmatrix} 1 & 0 \\ 0 & r^2 \end{bmatrix}$ is not constant, but rapidly oscillating, and hence methods that work for quasi-quadratic fast potentials (i.e. ``harmonic oscillator'' with a slowly changing frequency) (\cite{LeBris:07} for example) cannot be  applied.

\end{enumerate}
\end{Remark}

\begin{figure} [h]
\begin{tabular}{c}
{\resizebox{\imsizebig}{!}{\includegraphics{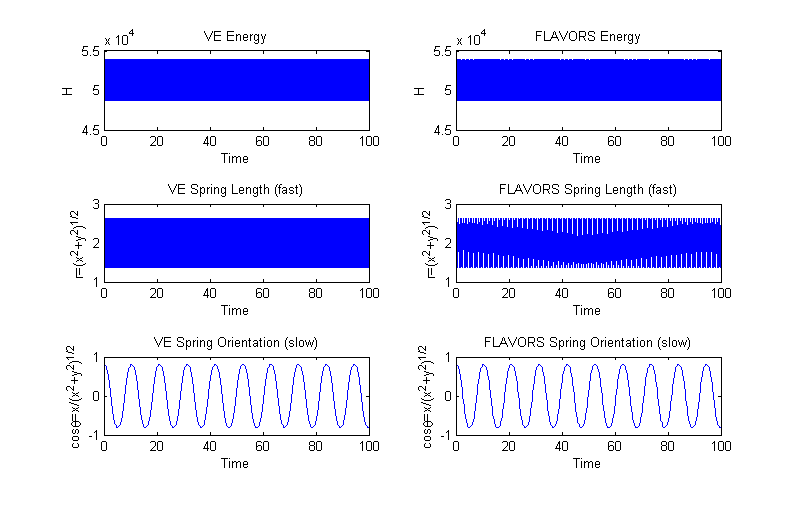}}}
\end{tabular}
\caption{\footnotesize Simulation of \eqref{sdiudyiuyyur}. Symplectic Euler uses  small time step $\tau=0.0002$ and the induced symplectic FLAVOR (\eref{jhjhgjhgjg87g87A} and \eref{lsjfaijissdsdedlfskjlkd}) uses  mesostep $\delta=0.01$ and  microstep $\tau=0.0002$. In this simulation $\omega=500$, $x(0)=1.1$, $y(0)=0.8$, $p_x(0)=0$, $p_y(0)=0$ and simulation time $T=100$.}
\label{example4result}
\end{figure}

Figure \ref{example4result}  compares symplectic Euler  with the induced symplectic FLAVOR (\eref{jhjhgjhgjg87g87A} and \eref{lsjfaijissdsdedlfskjlkd}) applied to \eref{sdiudyiuyyur} in Euclidean coordinates.

FLAVOR  reproduced the slow $\theta$ trajectory  while accelerating the simulation time by roughly 50x (since $\delta=50\tau$). It can also be seen from both  energy fluctuations and the trajectory of the fast variable that the fast process' amplitude is well captured although its period has been lengthened.

\begin{figure} [h]
\centering
\includegraphics[width=0.5\textwidth]{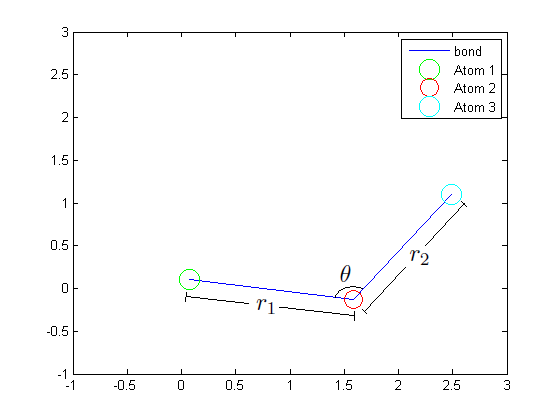}
\caption{\footnotesize One exemplary configuration of a propane molecule}
\label{propane_model}
\end{figure}

\subsection{Nonlinear 2D molecular clipper}\label{expro}

We now consider a united-atom representation of a three atom polymer with two bonds (e.g. propane or water molecule). This is a simplified version of several prevailing molecular dynamics force fields (for example, CHARMM \cite{CHARMM}, AMBER \cite{AMBER}, or a simpler example of butane \cite{butane1,butane2}). Using conservation of momentum,
we fix the coordinate system in the 2D plane defined by the three atoms.
 Introduce both Cartesian coordinates $(x_1,y_1,x_2,y_2,x_3,y_3)$, as well as generalized coordinates $r_1=\sqrt{(x_2-x_1)^2+(y_2-y_1)^2}$ and $r_2=\sqrt{(x_3-x_2)^2+(y_3-y_2)^2}$ for bond lengths and $\theta$ for the angle between the two bonds (Figure \ref{propane_model}). The kinetic energy is
\begin{equation}
    K.E.=\frac{1}{2}m_1 (\dot{x}_1^2+\dot{y}_1^2)+\frac{1}{2}m_2 (\dot{x}_2^2+\dot{y}_2^2)+\frac{1}{2}m_3 (\dot{x}_3^2+\dot{y}_3^2)
\end{equation}
where $m_1$,$m_2$,and $m_3$ denote the masses of the atoms.

The potential energy consists of a bond term and a bond angle term, both of which are of harmonic oscillator type:
\begin{align}
    P.E. &= V_{bond}+V_{angle} \\
    V_{bond} &= \frac{1}{2}K_r [(r_1-r_0)^2+(r_2-r_0)^2] \\
    V_{angle} &= \frac{1}{2} K_\theta (\cos(\theta)-\cos(\theta_0))^2
\end{align}

Notice that the system is in fact fully nonlinear: if written in generalized coordinates, the kinetic energy will correspond to a nonlinear and position dependent mass matrix, whereas in Cartesian coordinates, both terms in the potential energy are non-polynomial functions in positions.

In the case of propane, $m_1=15\mu,m_2=14\mu,m_3=15\mu$ where $\mu=1.67\cdot 10^{-27}kg$, $r_0=1.53 \AA$, $K_r=83.7kcal/(mol \AA^2)$, $\theta_0=109.5^{\circ}$ and $K_\theta=43.1kcal/mol$ \cite{butane1}.

The propane system is characterized by a separation of timescales to some extent: bond stretching and bond-angle bending are characterized by $10^{14}$ and $10^{13}$ Hz vibrational frequencies respectively \cite{ZhSc:93}. For investigation on FLAVORS, we use unitless parameters and exaggerate the timescale separation by setting $K_r$ to be $8370$ and $K_\theta$ to be $4.31$. We also let $\mu=1$ without loss of generality for arithmetic considerations.

In this system, the bond potential is the fast potential and the bond-angle potential is the slow one. It is well known that using a large time step at the timescale corresponding to the bond-angle potential by freezing bond lengths produces biased results, and many physics based methods have been proposed to remedy this difficulty (for example by the approach of Fixman \cite{Fixman:74}; also see a review in \cite{ZhSc:93}). On the other hand, few multiscale methods work for this fully nonlinear system.

\begin{figure} [h]
\begin{tabular}{c}
{\resizebox{\imsizebig}{!}{\includegraphics{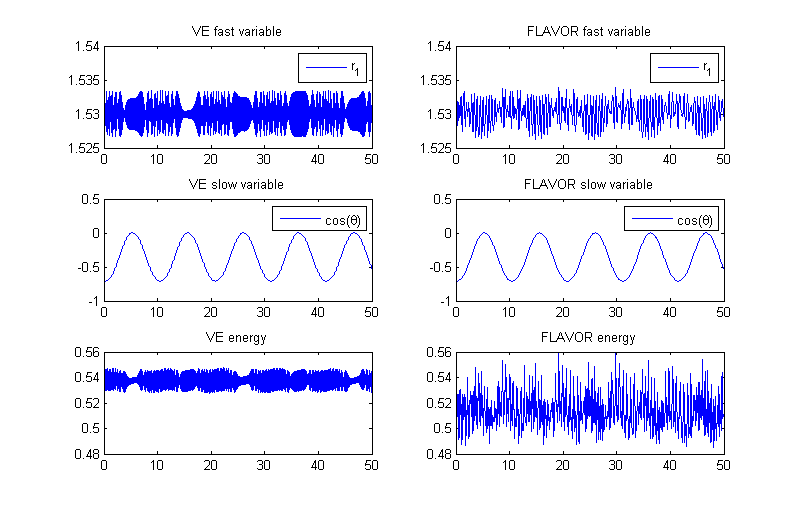}}}
\end{tabular}
\caption{\footnotesize Simulations of exaggerated propane molecule (Subsection \ref{expro}). Symplectic Euler uses $h=0.01$ and the induced symplectic FLAVOR (\eref{jhjhgjhgjg87g87A} and \eref{lsjfaijissdsdedlfskjlkd}) parameters are $\delta=0.1$ and $\tau=0.01$. Initial conditions are $[x_1,y_1,x_2,y_2,x_3,y_3]=[0,  0,  1.533,  0,  2.6136, 1.0826]$ and $[m_1\dot{x}_1,m_1\dot{y}_1,m_2\dot{x}_2,m_2\dot{y}_2,m_3\dot{x}_3,m_3\dot{y}_3]=[-0.4326,-1.6656,0.1253,0.2877,-1.1465,1.1909]$.}
\label{propane_modelresult}
\end{figure}

Figure \ref{propane_modelresult}  compares symplectic Euler  with the induced symplectic FLAVOR (\eref{jhjhgjhgjg87g87A} and \eref{lsjfaijissdsdedlfskjlkd}) applied in Euclidean coordinates. 10x acceleration is achieved. A simulation movie is also available at \href{http://www.cds.caltech.edu/~mtao/Propane.avi}{\url{http://www.cds.caltech.edu/~mtao/Propane.avi}} and \href{http://www.acm.caltech.edu/~owhadi/}{\url{http://www.acm.caltech.edu/~owhadi/}}.

\subsection{Forced nonautonomous mechanical system: Kapitza's inverted pendulum}
As the famous Kapitza's inverted pendulum shows \cite{Kapitza:65} (for recent references see \cite{Ariel:09} for numerical integration and \cite{SernaHammer} for generalization to the stochastic setting), the up position of a single pendulum can be stabilized if the pivot of the pendulum experiences external forcing in the form of vertical oscillation. Specifically, if the position of the pivot is given by $y=sin(\omega t)$, the system is governed by
\begin{equation}
    l\ddot{\theta}=[g+\omega^2 \sin(2\pi \omega t)]\sin \theta
\end{equation}
where $\theta$ denotes the clockwise angle of the pendulum from the positive $y$ direction, $l$ is the length of the pendulum and $g$ is the gravitational constant. In this case, the rapid vibration causes the pendulum to oscillate slowly around the positive $y$ direction with a $\mathcal{O}(1)$ frequency.

\begin{figure} [h]
\begin{tabular}{c}
{\resizebox{\imsizebig}{!}{\includegraphics{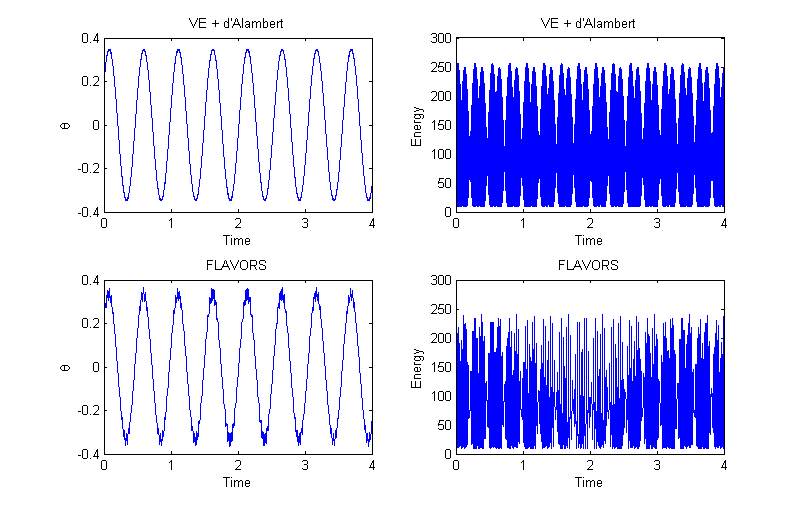}}}
\end{tabular}
\caption{Simulations of the inverted pendulum. The integration by Variational Euler + d'Alembert principle uses time step $h=0.2/\omega/\sqrt{l}\approx 0.000067$, while FLAVOR (defined by \eref{jakjshdhgshjd}) uses $\delta=0.002$ and $\tau=0.2/\omega/\sqrt{l}$. Also, $g=9.8$, $l=9$, $\theta(0)=0.2$, $\dot{\theta}(0)=0$ and $\omega=1000$}
\label{InvertedPendulum}
\end{figure}

A single scale integration of this system could be done by Variational Euler with discrete d'Alembert principle for external forces \cite{MaWe:01}
\begin{equation}
    \begin{cases}
        f_i=\omega^2 \sin(2\pi \omega i h) \\
        p_{i+1}=p_i+h[g+f_i] \sin \theta_i \\
        \theta_{i+1}=\theta_i+hp_{i+1}/l
    \end{cases}
\end{equation}
where the time step length $h$ has to be smaller than $\mathcal{O}(1/\omega)$.

 FLAVOR is given by
\begin{equation}\label{jakjshdhgshjd}
\begin{cases}
q_{n\delta+\tau}=q_{n\delta}+\tau p_{n\delta}/l \\
p_{n\delta+\tau}=p_{n\delta} +\tau g \sin(q_{n\delta+\tau})+ \omega^2 \sin(2\pi\omega n\tau)\\
q_{(n+1)\delta}=q_{n\delta+\tau}+(\delta-\tau) p_{n\delta+\tau}/l \\
p_{(n+1)\delta}=p_{n\delta+\tau} +(\delta-\tau)  g \sin(q_{(n+1)\delta})
\end{cases}
\end{equation}

Observe that the time dependent force is synchronized on the $\tau$ time scale instead of the $\delta$ time scale, specifically $\omega^2 \sin(2\pi\omega n\tau)$ instead of $\omega^2 \sin(2\pi\omega n\delta)$ in \eref{jakjshdhgshjd}

Numerical results are illustrated in Figure \ref{InvertedPendulum} (also available as a movie at \href{http://www.cds.caltech.edu/~mtao/InvertedPendulum.avi}{\url{http://www.cds.caltech.edu/~mtao/InvertedPendulum.avi}} and \href{http://www.acm.caltech.edu/~owhadi/}{\url{http://www.acm.caltech.edu/~owhadi/}}). Notice in this example that $\theta$, being the only degree of freedom, contains a combination of slow and fast dynamics. FLAVOR could only capture the fast dynamics in the sense of measures, and this is why dents appear
as modulation on
 the slow oscillation of $\theta$. On the other hand, although this forced system does not admit a conserved energy, the value of the Hamiltonian should oscillate periodically due to the periodic external driving force. While a non mechanics based method such as Forward Euler often produces an unbounded growth or a decrease in the energy, FLAVORS do not have this drawback.

\begin{Remark}
Consider the case of a rapid potential of the form $\Omega^2(q_1) q_2^2/\epsilon^2$ (where $q_1$ is the slow an $q_2$ the fast variable). In the limit of a vanishing $\epsilon$, it is known that the term contributes to the effective Hamiltonian with a contribution $V(q_1)$ (the so called Fixman term). On may have the intuition that FLAVOR would only be consistent with a term of the form $\gamma V(q_1)$, where $0<\gamma<1$ is only a fraction of one, because the rapid force is only accounted for over a time $\tau<\delta$. This intuition is not correct because the effect of FLAVOR is not to account the rapid force for over a time $\tau<\delta$ but to slow down the rapid force by a fraction $\tau/\epsilon$. This effect can also be seen in the algorithm \eref{jakjshdhgshjd} where the force term $\omega^2 \sin(2\pi\omega n\tau)$ has been slowed down by a factor $\tau/\delta$ (the Kapitza's inverted pendulum illustrates a similar phenomenon where rapid oscillations contribute a stabilizing term to the effective Hamiltonian, nevertheless FLAVORS remain accurate).
\end{Remark}

\begin{figure} [h]
\begin{tabular}{c}
{\resizebox{\imsizebig}{!}{\includegraphics{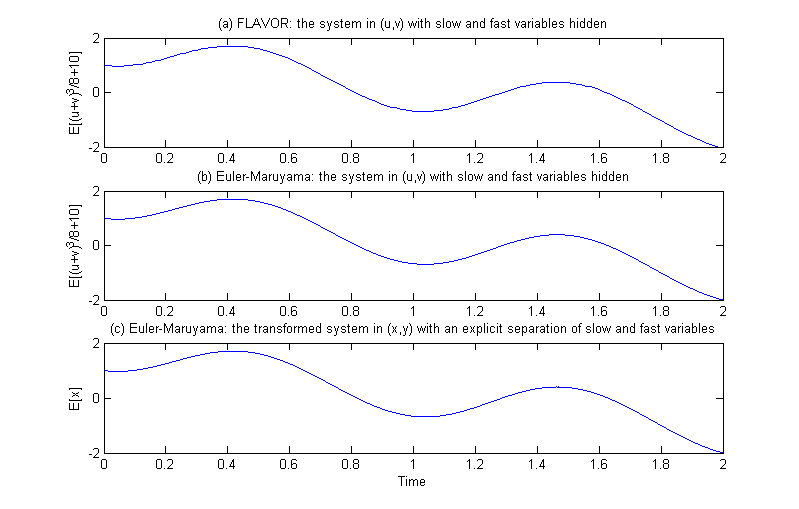}}}
\end{tabular}
\caption{\footnotesize (a) Integration of \eqref{general_SDE_mixedup} by nonintrusive FLAVOR \eref{ksjahussahshdskdcrrfsdaue0} using mesostep step $\delta=0.01$ (b) Integration of \eqref{general_SDE_mixedup} by Euler-Maruyama using fine time step $h=10^{-4}$ (c) Integration of \eqref{general_SDE_separated} by Euler-Maruyama using the same small step $h=10^{-4}$. Expectations of the slow variable (whether or not hidden) are obtained by empirically averaging over an ensemble of 100 independent sample trajectories. $\epsilon=10^{-4}$, $x(0)=1+\epsilon$, $y(0)=1$, $T=2$ (the expectation of the real solution will blow up around $T=3$). We have chosen $c=10$ so that the transformation is a diffeomorphism.}
\label{example_general_SDE_mixedup}
\end{figure}

\subsection{Nonautonomous SDE system with hidden slow variables}

Consider the following artificial nonautonomous SDE system
\begin{equation}
    \begin{cases}
        du=\frac{4}{3(u+v)^2}\left(-\frac{1}{2}\left(\frac{v-u}{2}\right)^2+5\sin(2\pi t)\right)dt-\frac{1}{\epsilon}\left(\left(\frac{u+v}{2}\right)^3+c-\frac{v-u}{2}\right)dt-\sqrt{\frac{2}{\epsilon}}dW_t \\
        dv=\frac{4}{3(u+v)^2}\left(-\frac{1}{2}\left(\frac{v-u}{2}\right)^2+5\sin(2\pi t)\right)dt+\frac{1}{\epsilon}\left(\left(\frac{u+v}{2}\right)^3+c-\frac{v-u}{2}\right)dt+\sqrt{\frac{2}{\epsilon}}dW_t
    \end{cases}
    \label{general_SDE_mixedup}
\end{equation}
where $c$ is a positive constant and the two $dW_t$ terms refer to the same Brownian Motion.
The system \eref{general_SDE_mixedup} can be converted via the local diffeomorphism
\begin{equation}
    \begin{cases}
        u=(x-c)^{1/3}-y \\
        v=(x-c)^{1/3}+y
    \end{cases} ,
\end{equation}
into the following hidden system separating slow and fast variables
\begin{equation}
    \begin{cases}
        dx=-\frac{1}{2}y^2 dt+5\sin(2\pi t)dW_t \\
        dy=\frac{1}{\epsilon} (x-y)dt + \sqrt{\frac{2}{\epsilon}}dW_t
    \end{cases} .
    \label{general_SDE_separated}
\end{equation}
Nonintrusive FLAVOR \eref{ksjahussahshdskdcrrfsdaue0} can be directly applied to \eqref{general_SDE_mixedup} using a time step $\delta\gg \epsilon$ without prior identification of the slow and fast variables, i.e., without prior identification of the slow variable $x$ or of the system \eref{general_SDE_separated}.
The expected values of solutions of \eref{general_SDE_mixedup} integrated by FLAVORS with mesostep $\delta$ and Euler-Maruyama with a small time step $\tau$ are presented in Figure \ref{example_general_SDE_mixedup}. FLAVOR has accelerated the computation by 100x.

\begin{figure} [h]
\begin{tabular}{c}
{\resizebox{\imsizebig}{!}{\includegraphics{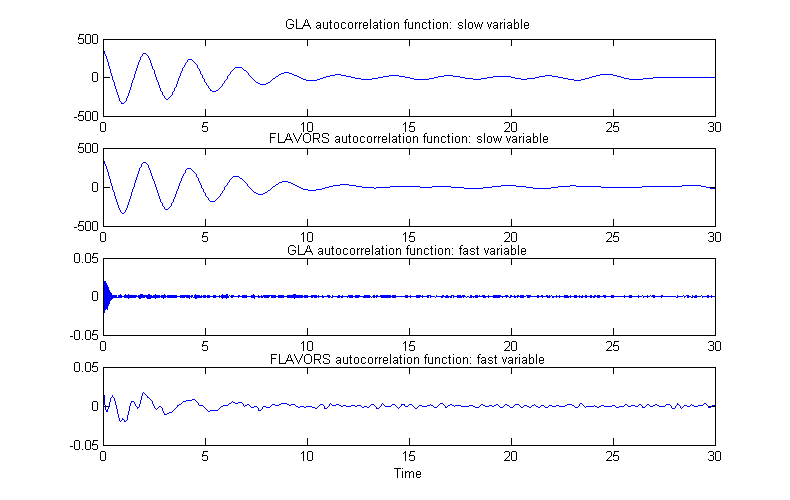}}}
\end{tabular}
\caption{SDE \eqref{langevin_slow_nonlinear}: autocorrelation functions of $\mathbb{E}[y(t)y(0)]$ (dominantly fast) and of $\mathbb{E}[(x(t)-y(t))(x(0)-y(0))]$ (dominantly slow), empirically obtained by GLA and FLAVORS.}
\label{langevin_slow_nonlinear_auto}
\end{figure}

\begin{figure} [ht]
\centering
\subfigure[$\mathbb{E}\big(x(t)-y(t)\big)$]{
\includegraphics[width=0.46\textwidth]{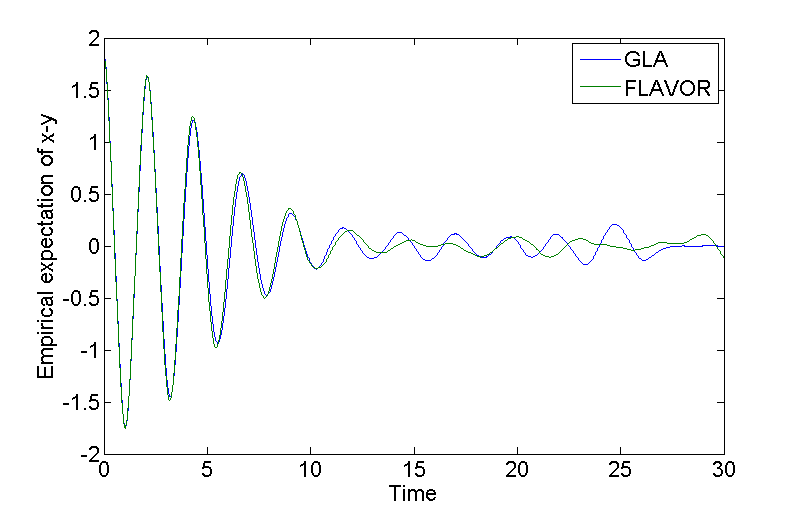}
\label{langevin_slow_nonlinear_moment1}
}
~
\subfigure[$\mathbb{E}\big((x(t)-y(t))^2\big)$]{
\includegraphics[width=0.46\textwidth]{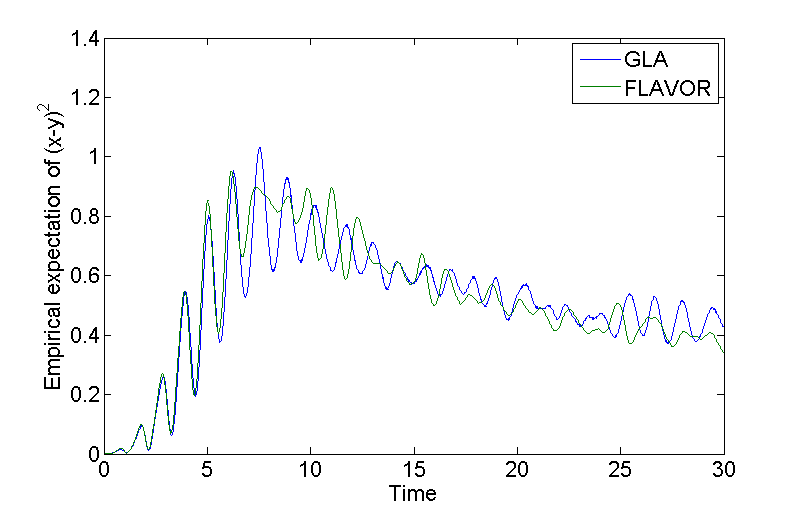}
\label{langevin_slow_nonlinear_moment2}
}
\caption{SDE \eqref{langevin_slow_nonlinear}: Empirical moments obtained from simulations of ensembles of \eqref{langevin_slow_nonlinear} with GLA and quasi-symplectic FLAVOR (Subsection \ref{qsfla})}
\end{figure}

\subsection{Langevin equations with slow noise and friction}

In this subsection, we consider the one dimensional, two degrees of freedom system   modeled by the SDEs (now both springs are quartic rather than harmonic):
\begin{equation}
    \begin{cases}
        dy=p_y dt \\
        dx=p_x dt \\
        dp_y=-\epsilon^{-1} y^3 dt - 4(y-x)^3 dt - cp_y dt + \sigma dW^1_t \\
        dp_x=-4(x-y)^3 dt - cp_x dt + \sigma dW^2_t
    \end{cases} .
    \label{langevin_slow_nonlinear}
\end{equation}

We compare several autocorrelation functions and time-dependent moments of this stochastic process integrated by quasi-symplectic FLAVOR (\eref{kdfsjhdskdcrrfdaue0} and \eref{lsjfaijissdsdedlfskjlkd}) and Geometric Langevin Algorithm (GLA) \cite{BoOw:09}. FLAVOR and GLA gave results in agreement (Figure \ref{langevin_slow_nonlinear_auto},
\ref{langevin_slow_nonlinear_moment1}, \ref{langevin_slow_nonlinear_moment2}). Since GLA is weakly-convergent and Boltzmann-Gibbs preserving, this is numerical evidence that quasi-symplectic FLAVOR is also.

Expectations are empirically calculated by averaging over an ensemble of 100 sample trajectories with $T=30$, $\epsilon=10^{-8}$, $\tau=0.001$, $\delta=0.01$. $y(0)=2.1/\omega$ (with $\omega:=1/\sqrt{\epsilon}$), $x(0)=y(0)+1.8$, $c=0.1$ and $\sigma=0.5$. GLA uses time step $h=0.001$. Noise and friction are slow here in the sense that they are not of the order $\mathcal{O}(\omega)$ or larger.

As shown in the plots, in the regime dominated by deterministic dynamics (roughly from $t=0$ to $t=8$) various moments calculated empirically by FLAVORS and GLA are in agreement, indicating that the same rate of convergence towards the Boltzmann-Gibbs distribution is obtained. And in that regime, autocorrelation functions of the slow variables agree, serving as numerical evidence that FLAVORS is weakly converging towards the SDE solution, whereas autocorrelation functions of the fast variables agree only in the sense of measures (after time averaging over a mesoscopic ($\mathcal{o}(1)$) time span).
The fluctuations between  FLAVORS and GLA for large time are an effect of the finite number of samples ($100$) used to compute sample averages.

Recall that if the noise is applied to slow variables, FLAVORS do not converge strongly but only in the sense of distributions.

\subsection{Langevin equations with fast noise and friction}
\begin{figure} [h]
\begin{tabular}{c}
{\resizebox{\imsizebig}{!}{\includegraphics{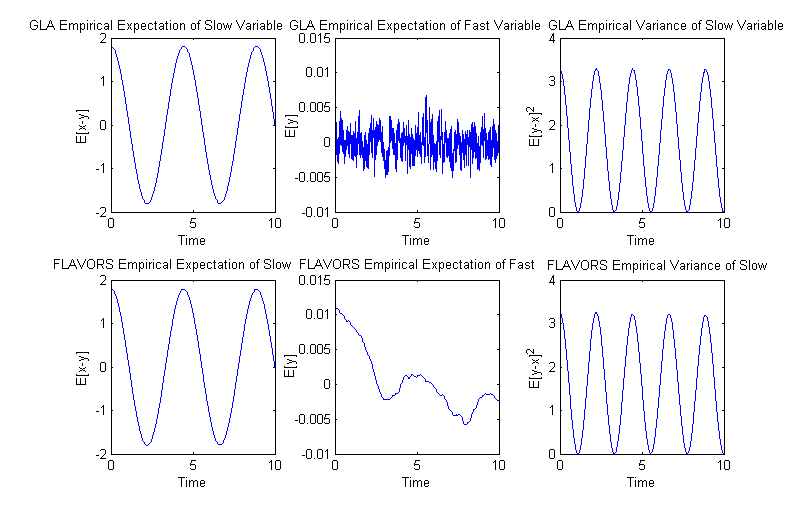}}}
\end{tabular}
\caption{\footnotesize $\mathbb{E}[x(t)-y(t)]$, $\mathbb{E}[y(t)]$, and $\mathbb{E}[x(t)-y(t)]^2$ obtained by GLA and quasi-symplectic FLAVOR (Subsection \ref{qsfla}). Expectations are empirically calculated by averaging over an ensemble of 50 sample trajectories with $T=10$, $\omega=100$, $\tau=10^{-4}$, $\delta=0.01$. $y(0)=1.1/\omega$, $x(0)=y(0)+1.8$, $c=0.1$ and $\sigma=1$. GLA uses  time step $h=10^{-4}$. }
\label{langevin_fast}
\end{figure}

Consider a system with the same configuration as above. The difference is that the soft spring oscillates at a frequency nonlinearly dependent on the stiff spring's length, and the left mass experiences strong friction and noise while the right mass does not.
The Hamiltonian is
\begin{equation}
    H(y,x,p_y,p_x)=\frac{1}{2}p_y^2+\frac{1}{2}p_x^2/2+\frac{1}{4}\omega^4 y^4+e^y (x-y)^2 ,
\end{equation}
and the governing SDEs are:
\begin{equation}
    \begin{cases}
        dy=p_y dt\\
        dx=p_x dt\\
        dp_y=-\omega^4 y^3 dt - (2+y-x)(y-x) e^y dt - \omega^2 cp_y dt + \omega \sigma dW^t \\
        dp_x=-2(x-y)e^y dt
    \end{cases} .
\end{equation}
In this system, the deterministic dynamics and the effects of noise and friction both involve  a $\mathcal{O}(1/\omega^2)$ timescale. We have implemented the fast noise and friction version of FLAVORS (\eref{kdfsjhdiiuskdcrrfdauijhe0} and \eref{lsjfaijissdsdedlfskjlkd}).

In Figure \ref{langevin_fast}, we have
 plotted the first and second moments of the dominantly slow variable $x(t)-y(t)$ as well as the first moment of the dominantly fast variable $y(t)$ as functions of time. Moments of the dominantly slow variable integrated by quasi-symplectic FLAVOR (Subsection \ref{qsfla}) and GLA \cite{BoOw:09} concur, numerically suggesting weak convergence and preservation of Boltzmann-Gibbs. 100x computational acceleration is achieved.

\section{Appendix}
\subsection{Proof of theorems \ref{thm01} and \ref{thm01b}}\label{subap1}

Define the process $t\mapsto (\bar{x}_{t},\bar{y}_{t})$ by
\begin{equation}\label{ksjdededhdskdjjhwue}
(\bar{x}_{t},\bar{y}_{t}):=\eta(\bar{u}_t) .
\end{equation}
It follows from the regularity of $\eta$ that it is sufficient to prove the
$F$-convergence of $(\bar{x}_{t},\bar{y}_{t})$ towards $\delta_{X_t}\otimes \mu(X_t,dy)$. Moreover, it is also sufficient to prove the following inequalities \eref{jkdsgkdjshgdsdkjghe11}, \eref{lkslkcdsdehkhsdsddedj311} in order to obtain inequalities \eref{jkdsgkdjshgdsdkjghe} and \eref{lkslkcdhkhsdedj3}
\begin{equation}\label{jkdsgkdjshgdsdkjghe11}
\begin{split}
|x_{t}^\epsilon-\bar{x}_{t}|\leq C e^{C t} \psi_1(u_0,\epsilon,\delta,\tau)
\end{split}
\end{equation}
and
\begin{equation}\label{lkslkcdsdehkhsdsddedj311}
\begin{split}
\Big|\frac{1}{T}\int_{t}^{t+T}\varphi(\bar{x}_{s},\bar{y}_s)\,ds-\int_{\R^p} \varphi(X_t,y)\mu(X_t,dy)\Big|\leq \psi_2(u_0,\epsilon,\delta,\tau,T,t) (\|\varphi\|_{L^\infty}+\|\nabla \varphi\|_{L^\infty} )
\end{split}
\end{equation}
Now define $\psi^\epsilon_\tau$ by
\begin{equation}
\psi^\epsilon_\tau(x,y):= \eta\circ \theta^{\epsilon}_\tau\circ \eta^{-1}(x,y)
\end{equation}
Define $\psi^g_h$ by
\begin{equation}
\psi^g_h(x,y):= \eta\circ \theta^{G}_h\circ \eta^{-1}(x,y)
\end{equation}
\begin{Proposition}\label{kdlhkjhwjd2d}
The vector fields $f$ and $g$ associated with the system of Equations \eref{kfgfgdiuuiusedejhd} are  Lipschitz continuous. We also have
\begin{equation}\label{ksjhdskdxddxjjhwue}
(\bar{x}_{t},\bar{y}_{t})=\big(\psi^g_{\delta-\tau}\circ \psi^\epsilon_{\tau}\big)^k(x_0,y_0)\quad \text{for}\quad k\delta \leq t <(k+1)\delta .
\end{equation}
Moreover, there exists $C>0$ such that for $h\leq h_0$ and $\frac{\tau}{\epsilon}\leq \tau_0$ we have
\begin{equation}\label{lsfde3eldeiuuddiuskjlkd}
\big|\psi_\tau^{\epsilon}(x,y)-(x,y)-\tau \big(g(x,y),0\big)-\frac{\tau}{\epsilon}\big(0,f(x,y)\big) \big|\leq C \big(\frac{\tau}{\epsilon}\big)^2
\end{equation}
and
\begin{equation}\label{lsfddeiuskjddflkd}
\big|\psi_h^{g}(x,y)-(x,y)-h \big(g(x,y),0\big) \big|\leq C h^2 .
\end{equation}
Furthermore, given $x_0,y_0$, the trajectories of $(x^\epsilon_{t},y^\epsilon_{t})$ and $(\bar{x}_{t},\bar{y}_{t})$ are uniformly bounded in $\epsilon$, $\delta \leq h_0$, $\tau \leq \min(\tau_0 \epsilon, \delta)$.
\end{Proposition}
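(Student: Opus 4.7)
The plan is to transport the assumptions on $F$, $G$, $\theta^G$ and $\theta^\epsilon$ through the diffeomorphism $\eta$ and read off each of the four claims in turn. First I would pin down what $f$ and $g$ must be: differentiating $(x^\epsilon_t,y^\epsilon_t)=\eta(u^\epsilon_t)$ and comparing the chain-rule expression with \eqref{kfgfgdiuuiusedejhd} simultaneously for every $\epsilon>0$ forces the structural identities $D\eta^x(u)F(u)\equiv 0$ and $D\eta^y(u)G(u)\equiv 0$, with $g(x,y)=D\eta^x(\eta^{-1}(x,y))\,G(\eta^{-1}(x,y))$ and $f(x,y)=D\eta^y(\eta^{-1}(x,y))\,F(\eta^{-1}(x,y))$. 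Lipschitzness of $f$ and $g$ then follows from item 1 of Condition \ref{lsdsddsdeehxA3} combined with the uniform bounds on $D\eta$, $D^2\eta$ and $D\eta^{-1}$ from Condition \ref{lsdsddsdeehxA1}. Identity \eqref{ksjhdskdxddxjjhwue} is a definition chase: by construction $\bar u_{(k+1)\delta}=\theta^G_{\delta-\tau}\circ\theta^\epsilon_\tau(\bar u_{k\delta})$, and inserting $\eta^{-1}\circ\eta=\mathrm{Id}$ between consecutive steps rewrites the iterate as $(\psi^g_{\delta-\tau}\circ\psi^\epsilon_\tau)^k(x_0,y_0)$.

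The two local-consistency estimates are the substantive part of the proposition, and I would prove both by the same Taylor argument. Set $u:=\eta^{-1}(x,y)$ and $w:=\theta^\epsilon_\tau(u)-u$. By item 2 of Condition \ref{lsddehlkssdsdsdeehx} one has $w=\tau G(u)+\tfrac{\tau}{\epsilon}F(u)+r$ with $|r|\le C(\tau/\epsilon)^2$, so in particular $|w|\le C\tau/\epsilon$. The uniform bound on $D^2\eta$ yields $\psi^\epsilon_\tau(x,y)=\eta(u+w)=(x,y)+D\eta(u)\,w+O(|w|^2)$. Using the structural identities from the first paragraph, $D\eta(u)(\tau G(u)+\tfrac{\tau}{\epsilon}F(u))=(\tau g(x,y),\tfrac{\tau}{\epsilon}f(x,y))$, while $|D\eta(u)\,r|\le C(\tau/\epsilon)^2$ and $|w|^2\le C(\tau/\epsilon)^2$; combining these and using $\epsilon\le 1$ to absorb the residual $\tau^2|G(u)|^2$ term delivers \eqref{lsfde3eldeiuuddiuskjlkd}. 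Estimate \eqref{lsfddeiuskjddflkd} is the same argument carried out with only the $G$-term present, invoking item 1 instead of item 2 of Condition \ref{lsddehlkssdsdsdeehx}.

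Uniform boundedness of $(x^\epsilon,y^\epsilon)$ and $(\bar x,\bar y)$ is then immediate from Condition \ref{lsdsddsdeehxA3}: the trajectories of $u^\epsilon$ and $\bar u$ are uniformly bounded, and $\eta$ has a uniform Lipschitz constant, so it maps bounded sets to bounded sets. The only slightly delicate point in the whole argument is that Condition \ref{lsdsddsdeehxA1}, being required to hold simultaneously for every $\epsilon>0$, is strictly stronger than the bare existence of a split ODE of the form \eqref{kfgfgdiuuiusedejhd}: it encodes the vanishing identities $D\eta^x F\equiv 0$ and $D\eta^y G\equiv 0$ that actually drive the consistency estimates and make $f,g$ independent of $\epsilon$. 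Everything else is routine chain-rule and Taylor-expansion bookkeeping.
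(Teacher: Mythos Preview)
Your proposal is correct and follows essentially the same route as the paper: both arguments extract the structural identities $D\eta^x\,F\equiv 0$ and $D\eta^y\,G\equiv 0$ from the requirement that \eqref{kfgfgdiuuiusedejhd} hold for every $\epsilon$, read off the formulas for $f$ and $g$, and then Taylor-expand $\eta$ around $\eta^{-1}(x,y)$ using the one-step bounds \eqref{hgfjhgdfsjhgfhf}--\eqref{lsfddlfskjlkd} to obtain \eqref{lsfde3eldeiuuddiuskjlkd} and \eqref{lsfddeiuskjddflkd}. Your write-up is in fact slightly more explicit than the paper's (you spell out the increment $w$ and the $O(|w|^2)$ remainder, and you flag the $\epsilon\le 1$ absorption), but the logic is identical.
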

\begin{proof}
Since $(x,y)=\eta(u)$, we have
\begin{equation}
\dot{x}=(G+\frac{1}{\epsilon}F)\nabla \eta^x \circ \eta^{-1}(x,y)
\end{equation}
\begin{equation}
\dot{y}=(G+\frac{1}{\epsilon}F)\nabla \eta^y \circ \eta^{-1}(x,y) .
\end{equation}
Hence, we deduce from Equation \eref{kfgfgdiuuiusedejhd} of Condition \ref{lsdsddsdeehxA1} that
\begin{equation}
g(x,y)=G \nabla \eta^x \circ \eta^{-1}(x,y)
\end{equation}
\begin{equation}
f(x,y)=F \nabla \eta^y \circ \eta^{-1}(x,y) .
\end{equation}
We deduce the regularity of $f$ and $g$ from the regularity of $G$, $F$ and $\eta$.
Equation \eref{ksjhdskdxddxjjhwue} is a direct consequence of  the definition of $\psi^\epsilon_\tau$ and $\psi^g_h$ and Equation \eref{ksjhdskdjjhwue} (we write $(x_0,y_0):=\eta(u_0)$).
Observe that Equation \eref{kfgfgdiuuiusedejhd} of Condition \ref{lsdsddsdeehxA1} also requires that
\begin{equation}\label{dklhwshd23}
F\nabla \eta^x=0 \quad G\nabla \eta^y=0 .
\end{equation}
Now observe that
\begin{equation}
\begin{split}
\psi^\epsilon_\tau(x,y)-(x,y)&-\big(g(x,y),0\big)\tau-\big(0,f(x,y)\big)\frac{\tau}{\epsilon}=
\\&\big(\eta\circ \theta^{\epsilon}_\tau-\eta-\tau \big(G \nabla \eta^x,0\big)-\frac{\tau}{\epsilon}
\big(0,F \nabla \eta^y\big)  \big)\circ \eta^{-1}(x,y) .
\end{split}
\end{equation}
Using \eref{dklhwshd23}, \eref{lsfddlfskjlkd}, Taylor expansion and the regularity of $\eta$ we obtain \eref{lsfde3eldeiuuddiuskjlkd}.
Similarly
\begin{equation}
\psi^g_h(x,y)-(x,y)-h\big(g(x,y),0\big):= \Big(\eta\circ \theta^{G}_h-\eta(x,y)-h\big(G \nabla \eta^x,0\big)\Big)\circ \eta^{-1}(x,y) .
\end{equation}
Using \eref{dklhwshd23}, \eref{hgfjhgdfsjhgfhf}, Taylor expansion and the regularity of $\eta$ we obtain \eref{lsfddeiuskjddflkd}.
The uniform bound (depending on $x_0,y_0$)  on the trajectories of $(x^\epsilon_{t},y^\epsilon_{t})$ and $(\bar{x}_{t},\bar{y}_{t})$ is a consequence of the uniform bound (given $u_0$) on the trajectories of $u^\epsilon_t$ and $\bar{u}_t$.
\end{proof}
It follows from Proposition \ref{kdlhkjhwjd2d} that it is sufficient to prove theorems \ref{thm01} and \ref{thm01b} in the situation where $\eta$ is the identity diffeomorphism. More precisely, the $F$-convergence of $\bar{u}_t$ is a consequence of the $F$-convergence of $(\bar{x}_t,\bar{y}_t)$ and the regularity of $\eta$. Furthermore, from the uniform bound (depending on $(x_0,y_0)$)  on the trajectories of $(x^\epsilon_{t},y^\epsilon_{t})$ and $(\bar{x}_{t},\bar{y}_{t})$ we deduce that $g$ and $f$ are uniformly bounded and Lipshitz continuous (in $\epsilon$, $\delta \leq h_0$, $\tau \leq \min(\tau_0 \epsilon, \delta)$) over those trajectories.

Define
\[\bar{g}:=\int g(x,y)\,\mu(x,dy)\]
where $\mu$ is the family of measures introduced in Condition \ref{lsdsddsdeehxA2}.
Let us prove the following lemma.
\begin{Lemma}\label{lkhdkjwhdkhe}
\begin{equation}
\begin{split}
|x_{n\delta}^\epsilon-\bar{x}_{n\delta}|\leq& Ce^{Cn\delta} \Big( \delta+\big(\frac{\tau}{\epsilon}\big)^2 \frac{1}{\delta}
+\sup_{1\leq l \leq n} |J(l)|\Big)
\end{split}
\end{equation}
with $J(k)=J_1(k)+J_2(k)$,
\begin{equation}
J_1(k):=\sum_{n=0}^{k-1} \big(\int_{n\delta}^{(n+1)\delta} g(x_{n\delta}^\epsilon,y_s^\epsilon)\,ds- \delta \bar{g}(x_{n\delta}^\epsilon\big)
\end{equation}
and
\begin{equation}
J_2(k):=\sum_{n=0}^{k-1} \delta \big(\bar{g}(\bar{x}_{n \delta})-  g(\bar{x}_{n\delta}, \bar{y}_{n\delta})\big)
\end{equation}
\end{Lemma}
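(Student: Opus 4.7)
The plan is to telescope the difference
\[
x^\epsilon_{n\delta}-\bar{x}_{n\delta}=\sum_{k=0}^{n-1}\Big[\big(x^\epsilon_{(k+1)\delta}-x^\epsilon_{k\delta}\big)-\big(\bar{x}_{(k+1)\delta}-\bar{x}_{k\delta}\big)\Big],
\]
and to match each increment against $\delta\bar{g}(x^\epsilon_{k\delta})$ and $\delta\bar{g}(\bar{x}_{k\delta})$ respectively, so as to isolate the two averaging residuals $J_1(n)$ and $J_2(n)$ plus a remainder that I control via Taylor expansion of the one-step numerical flow and Lipschitz continuity of $g$.

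For the exact flow, I would write $x^\epsilon_{(k+1)\delta}-x^\epsilon_{k\delta}=\int_{k\delta}^{(k+1)\delta}g(x^\epsilon_s,y^\epsilon_s)\,ds$. Because $g$ is uniformly bounded on the trajectory, $|x^\epsilon_s-x^\epsilon_{k\delta}|\le C\delta$ on $[k\delta,(k+1)\delta]$, and Lipschitz continuity of $g$ in $x$ yields
\[
x^\epsilon_{(k+1)\delta}-x^\epsilon_{k\delta}=\delta\bar{g}(x^\epsilon_{k\delta})+\Big(\int_{k\delta}^{(k+1)\delta}g(x^\epsilon_{k\delta},y^\epsilon_s)\,ds-\delta\bar{g}(x^\epsilon_{k\delta})\Big)+O(\delta^2),
\]
the middle term summing over $k$ to exactly $J_1(n)$. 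For the numerical flow, composing the one-step expansions \eref{lsfde3eldeiuuddiuskjlkd} and \eref{lsfddeiuskjddflkd} of Proposition \ref{kdlhkjhwjd2d} and using Lipschitz continuity of $g$ in $y$ to transfer its evaluation from the post-fast-step state $\psi^\epsilon_\tau(\bar{x}_{k\delta},\bar{y}_{k\delta})$ back to $(\bar{x}_{k\delta},\bar{y}_{k\delta})$, I obtain
\[
\bar{x}_{(k+1)\delta}-\bar{x}_{k\delta}=\delta\,g(\bar{x}_{k\delta},\bar{y}_{k\delta})+R_k,\qquad |R_k|\le C\big(\delta^2+(\tau/\epsilon)^2+\delta\cdot\tau/\epsilon\big),
\]
where the cross-term $\delta\cdot\tau/\epsilon$ comes from the $O(\tau/\epsilon)$ displacement of $\bar{y}$ during $\psi^\epsilon_\tau$. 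The residual $\delta\big(g(\bar{x}_{k\delta},\bar{y}_{k\delta})-\bar{g}(\bar{x}_{k\delta})\big)$ sums over $k$ to $-J_2(n)$, which then contributes $+J_2(n)$ to the signed difference $x^\epsilon-\bar{x}$.

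Combining the two expansions yields
\[
x^\epsilon_{n\delta}-\bar{x}_{n\delta}=\sum_{k=0}^{n-1}\delta\big(\bar{g}(x^\epsilon_{k\delta})-\bar{g}(\bar{x}_{k\delta})\big)+J(n)+\mathcal{R}_n,
\]
with $|\mathcal{R}_n|\le Cn\big(\delta^2+(\tau/\epsilon)^2+\delta\cdot\tau/\epsilon\big)$. I would absorb the awkward cross-term by AM--GM, $\tau/\epsilon\le\tfrac{1}{2}\big(\delta+(\tau/\epsilon)^2/\delta\big)$, so that $|\mathcal{R}_n|\le Cn\delta\big(\delta+(\tau/\epsilon)^2/\delta\big)$. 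Taking absolute values, invoking Lipschitz continuity of the averaged drift $\bar{g}$, and bounding $|J(n)|\le\sup_{1\le l\le n}|J(l)|$ gives
\[
|x^\epsilon_{n\delta}-\bar{x}_{n\delta}|\le C\delta\sum_{k=0}^{n-1}|x^\epsilon_{k\delta}-\bar{x}_{k\delta}|+\sup_{1\le l\le n}|J(l)|+Cn\delta\big(\delta+(\tau/\epsilon)^2/\delta\big),
\]
and a discrete Gronwall inequality closes the estimate, the linear prefactor $n\delta$ of the remainder being dominated by $e^{Cn\delta}$.

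The main obstacle I foresee is the careful bookkeeping of the one-step numerical expansion: the fast substep $\psi^\epsilon_\tau$ shifts $\bar{y}$ by $O(\tau/\epsilon)$ before the slow substep $\psi^g_{\delta-\tau}$ re-evaluates $g$, producing the mixed term $\delta\cdot\tau/\epsilon$ that would otherwise spoil the advertised rate and only disappears after the AM--GM absorption. A secondary technical point is verifying Lipschitz continuity of $\bar{g}(x)=\int g(x,y)\,\mu(x,dy)$, which I would harvest from Lipschitz continuity of $g$ together with the regularity of $x\mapsto\mu(x,dy)$ implicit in Condition \ref{lsdsddsdeehxA2}.
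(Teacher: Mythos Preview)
Your proposal is correct and follows essentially the same decomposition as the paper: the paper writes $x^\epsilon_{(n+1)\delta}-\bar{x}_{(n+1)\delta}=(x^\epsilon_{n\delta}-\bar{x}_{n\delta})+I_1+\cdots+I_5$ where your freezing-in-$x$ error is their $I_1$, your $J_1$ summand is their $I_2$, the $\bar{g}$ comparison is their $I_3$, your $J_2$ summand is their $I_4$, and your numerical one-step remainder $R_k$ is their $I_5$. Two cosmetic differences: (i) the paper states directly $|I_5|\le C\big(\delta^2+(\tau/\epsilon)^2\big)$, which already absorbs your cross-term $\delta\cdot\tau/\epsilon$ by the very AM--GM you write out; (ii) the paper carries out the Gronwall step as an explicit induction rather than invoking the discrete Gronwall lemma. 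Your concern about Lipschitz continuity of $\bar{g}$ is legitimate---the paper simply bounds $|I_3|$ by $\delta\|\nabla_x g\|_{L^\infty}|x^\epsilon_{n\delta}-\bar{x}_{n\delta}|$ without further comment, so you are being more careful than the paper on that point.
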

\begin{proof}
Observe that
\begin{equation}
x_{(n+1)\delta}^\epsilon=x_{n\delta}^\epsilon+\int_{n\delta}^{(n+1)\delta} g(x_{n \delta}^\epsilon,y_s^\epsilon)\,ds+\int_{n \delta}^{(n+1) \delta} (g(x_{s}^\epsilon,y_s^\epsilon)-g(x_{n \delta}^\epsilon,y_s^\epsilon))
\,ds
\end{equation}
Hence,
\begin{equation}
\begin{split}
x_{(n+1)\delta}^\epsilon-\bar{x}_{(n+1)\delta}=x_{n\delta}^\epsilon-\bar{x}_{n \delta}+I_1+I_2(n)+I_3+I_4(n)+I_5
\end{split}
\end{equation}
with
\begin{equation}
I_1:=\int_{n \delta}^{(n+1) \delta} (g(x_{s}^\epsilon,y_s^\epsilon)-g(x_{n \delta}^\epsilon,y_s^\epsilon)) ds
\end{equation}
\begin{equation}
I_2(n):=\int_{n \delta}^{(n+1) \delta} g(x_{n \delta}^\epsilon,y_s^\epsilon)\,ds- \delta \bar{g}(x_{n \delta}^\epsilon)
\end{equation}
\begin{equation}
I_3:=\delta \big(\bar{g}(x_{n \delta}^\epsilon)-\bar{g}(\bar{x}_{n \delta})\big)
\end{equation}
\begin{equation}
I_4(n):=\delta \big(\bar{g}(\bar{x}_{n \delta})-  g(\bar{x}_{n\delta}, \bar{y}_{n\delta})\big)
\end{equation}
\begin{equation}
I_5:=\delta g(\bar{x}_{n\delta}, \bar{y}_{n\delta})-(\bar{x}_{(n+1)\delta}-\bar{x}_{n \delta})
\end{equation}
Now observe that
\begin{equation}
|I_1|\leq \|\nabla_x g\|_{L^\infty} \delta^2
\end{equation}
and
\begin{equation}
|I_3|\leq \delta \|\nabla_x g\|_{L^\infty} |x_{n\delta }^\epsilon-\bar{x}_{n\delta}| .
\end{equation}
Using \eref{lsfde3eldeiuuddiuskjlkd} and \eref{lsfddeiuskjddflkd} we obtain that
\begin{equation}
|I_5|\leq C \Big(\delta^2+\big(\frac{\tau}{\epsilon}\big)^2\Big)
\end{equation}
Combining the previous equations, we have obtained that
\begin{equation}\label{ineq1}
\begin{split}
x_{(n+1)\delta}^\epsilon-\bar{x}_{(n+1)\delta}\leq x_{n\delta}^\epsilon-\bar{x}_{n \delta}+C \Big(\delta^2+\big(\frac{\tau}{\epsilon}\big)^2\Big) + C \delta |x_{n\delta}^\epsilon-\bar{x}_{n \delta}|+(I_2+I_4)(n)
\end{split}
\end{equation}
and
\begin{equation}\label{ineq2}
\begin{split}
x_{(n+1)\delta}^\epsilon-\bar{x}_{(n+1)\delta}\geq x_{n\delta}^\epsilon-\bar{x}_{n \delta}-C \Big(\delta^2+\big(\frac{\tau}{\epsilon}\big)^2\Big) - C \delta |x_{n\delta}^\epsilon-\bar{x}_{n \delta}|+(I_2+I_4)(n)
\end{split}
\end{equation}
Write
\begin{equation}
\begin{split}
J(n):=\sum_{k=0}^{n-1}(I_2+I_4)(k)
\end{split}
\end{equation}
Summing the first $n$ inequalities \eref{ineq1} and \eref{ineq2}
\begin{equation}
\begin{split}
x_{n\delta}^\epsilon-\bar{x}_{n\delta}\leq C \Big(\delta^2+\big(\frac{\tau}{\epsilon}\big)^2\Big) n + C \delta \sum_{k=0}^{n-1}|x_{k\delta}^\epsilon-\bar{x}_{k \delta}|+J(n)
\end{split}
\end{equation}
\begin{equation}
\begin{split}
x_{n\delta}^\epsilon-\bar{x}_{n\delta}\geq - C \Big(\delta^2+\big(\frac{\tau}{\epsilon}\big)^2\Big) n - C \delta \sum_{k=0}^{n-1}|x_{k\delta}^\epsilon-\bar{x}_{k \delta}|+J(n)
\end{split}
\end{equation}
Hence
\begin{equation}
\begin{split}
|x_{n\delta}^\epsilon-\bar{x}_{n\delta}|\leq C \Big(\delta^2+\big(\frac{\tau}{\epsilon}\big)^2\Big) n + C \delta \sum_{k=0}^{n-1}|x_{k\delta}^\epsilon-\bar{x}_{k \delta}|+|J(n)|
\end{split}
\end{equation}
And we obtain by induction
\begin{equation}\label{klsdkdlshdj}
\begin{split}
|x_{n\delta}^\epsilon-\bar{x}_{n\delta}|\leq& C \Big(\delta^2+\big(\frac{\tau}{\epsilon}\big)^2\Big)\big(n+ C \delta\sum_{k=1}^n (n-k)(1+C \delta)^{k-1}\big)\\
& |J(n)|+C \delta \sum_{l=2}^n (1+ C \delta )^{l-2}|J(n-l+1)|
\end{split}
\end{equation}
Equation \eref{klsdkdlshdj} concludes the proof of  Lemma \ref{lkhdkjwhdkhe}.
\end{proof}
We now need to control $J_1(k)$ and $J_2(k)$. First, let us prove the following lemma.

\begin{Lemma}\label{wgehedghg3d}
For $N\in \N^*$ we have
\begin{equation}
|J_1(k)|\leq (\delta k) C \Big(\delta e^{C \frac{\delta}{N\epsilon}}+ E\big(\frac{\delta}{N \epsilon}\big)\Big)
\end{equation}
\end{Lemma}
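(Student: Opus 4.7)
The plan is to split each mesoscopic window $[n\delta,(n+1)\delta]$ into $N$ equal sub-windows of length $\delta/N$ on which the frozen fast dynamics have time to approximately equidistribute with respect to the ergodic measure $\mu(x^\epsilon_{n\delta},dy)$, then apply the ergodicity hypothesis of Condition \ref{lsdsddsdeehxA2} on each sub-window.

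Fix $n\in\{0,\dots,k-1\}$ and set $t_j:=n\delta+j\delta/N$ for $j=0,\dots,N$. On the sub-window $[t_j,t_{j+1}]$ I would introduce the auxiliary fast process $\tilde Y^{(j)}$ driven by the frozen slow variable $x^\epsilon_{n\delta}$ and reinitialized to the true fast trajectory at $t_j$:
\[
\dot{\tilde Y}^{(j)}_s=\tfrac{1}{\epsilon}f(x^\epsilon_{n\delta},\tilde Y^{(j)}_s),\qquad \tilde Y^{(j)}_{t_j}=y^\epsilon_{t_j}.
\]
Writing $Y^{(j)}_\sigma:=\tilde Y^{(j)}_{t_j+\epsilon\sigma}$ yields the unit-speed fast process $\dot Y^{(j)}_\sigma=f(x^\epsilon_{n\delta},Y^{(j)}_\sigma)$ to which Condition \ref{lsdsddsdeehxA2} applies, and it runs over the window $[0,T]$ with $T:=\delta/(N\epsilon)$ as $s$ varies across the sub-window.

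Two ingredients then do the job. A Gronwall argument on the $y$-equation gives, using $|x^\epsilon_s-x^\epsilon_{n\delta}|\le C\delta$ (bounded drift of the slow variable over the window) and the Lipschitz property of $f$, the bound $|y^\epsilon_s-\tilde Y^{(j)}_s|\le C\delta(e^{C(s-t_j)/\epsilon}-1)\le C\delta e^{CT}$ uniformly for $s\in[t_j,t_{j+1}]$; Lipschitz regularity of $g$ in its second argument (available on the bounded trajectory set by Proposition \ref{kdlhkjhwjd2d}) then yields $|\int_{t_j}^{t_{j+1}}(g(x^\epsilon_{n\delta},y^\epsilon_s)-g(x^\epsilon_{n\delta},\tilde Y^{(j)}_s))\,ds|\le (\delta/N)C\delta e^{CT}$. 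Changing variables via $s=t_j+\epsilon\sigma$ in the remaining integral and applying Condition \ref{lsdsddsdeehxA2} to the uniformly bounded Lipschitz test function $\phi(y):=g(x^\epsilon_{n\delta},y)$ produces $|\int_0^T g(x^\epsilon_{n\delta},Y^{(j)}_\sigma)\,d\sigma-T\bar g(x^\epsilon_{n\delta})|\le C\,T\,E(T)$, whence $\epsilon\int_0^T g(x^\epsilon_{n\delta},Y^{(j)}_\sigma)\,d\sigma=(\delta/N)\bar g(x^\epsilon_{n\delta})+O((\delta/N)E(T))$.

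Summing the resulting sub-window estimate over $j=0,\dots,N-1$ collapses the $N$ sub-windows into the bound $|\int_{n\delta}^{(n+1)\delta}g(x^\epsilon_{n\delta},y^\epsilon_s)\,ds-\delta\bar g(x^\epsilon_{n\delta})|\le C\delta(\delta e^{CT}+E(T))$, and a further sum over $n=0,\dots,k-1$ produces exactly the claimed bound with $T=\delta/(N\epsilon)$. The only delicate point is the tension between the Gronwall exponential $e^{CT}$ (which wants $T$ small) and the ergodic factor $E(T)$ (which wants $T$ large); the free parameter $N$ is precisely what allows these two competing contributions to be balanced when Lemma \ref{wgehedghg3d} is combined downstream with the bound on $J_2$ and with Lemma \ref{lkhdkjwhdkhe}.
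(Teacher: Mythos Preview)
Your proposal is correct and essentially identical to the paper's own proof: both partition $[n\delta,(n+1)\delta]$ into $N$ sub-windows, introduce on each an auxiliary fast process with the slow variable frozen at $x^\epsilon_{n\delta}$ and reinitialized to $y^\epsilon$ at the sub-window's left endpoint, use a Gronwall argument to control $|y^\epsilon_s-\tilde Y^{(j)}_s|$ by $C\delta e^{C\delta/(N\epsilon)}$, and then invoke Condition~\ref{lsdsddsdeehxA2} on the rescaled time variable $\sigma=(s-t_j)/\epsilon$ over horizon $T=\delta/(N\epsilon)$ to control the ergodic-average error. Your closing remark on the role of $N$ in balancing the two error contributions is apt and anticipates exactly how the lemma is used downstream.
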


\begin{proof}
Define $\hat{y}^\epsilon_t$ such that $\hat{y}^\epsilon_t=y^\epsilon_t$ for $t=(n+j/N) \delta$, $j\in \N^*$, and
\begin{equation}
\frac{d \hat{y}^\epsilon_t}{dt}=\frac{1}{\epsilon}f(x_{n\delta}^\epsilon,\hat{y}^\epsilon_t)\quad \text{for}\quad (n+j/N) \delta \leq t<(n+(j+1)/N) \delta .
\end{equation}
Using the regularity of $f$ and $g$ we obtain that
\begin{equation}\label{skldlhskhjd}
|\hat{y}^\epsilon_t-y^\epsilon_t|\leq C \delta e^{C \frac{\delta}{N\epsilon}} .
\end{equation}
First, observe that
\begin{equation}
\begin{split}
\frac{1}{\delta}\int_{n\delta}^{(n+1)\delta} g(x_{n\delta}^\epsilon,y_s^\epsilon)\,ds-\bar{g}(x_{n\delta}^\epsilon)=K_1+K_2
\end{split}
\end{equation}
with
\begin{equation}
\begin{split}
K_1:=\frac{1}{\delta}\sum_{j=0}^{N-1} \int_{(n+j/N)\delta}^{(n+(j+1)/N)\delta} \big(g(x_{n\delta}^\epsilon,y_s^\epsilon)-g(x_{n\delta}^\epsilon,\hat{y}_s^\epsilon)\big)\,ds
\end{split}
\end{equation}
and
\begin{equation}
\begin{split}
K_2:=\frac{1}{N}\sum_{j=0}^{N-1} \Big(\frac{N}{\delta}\int_{(n+j/N)\delta}^{(n+(j+1)/N)\delta} g(x_{n\delta}^\epsilon,\hat{y}_s^\epsilon)\,ds- \bar{g}(x_{n\delta}^\epsilon)\Big) .
\end{split}
\end{equation}
We have

\begin{equation}
\begin{split}
|K_1|\leq \|\nabla_y g\|_{L^\infty}\frac{1}{N}\sum_{j=0}^{N-1} \sup_{(n+j/N)\delta\leq s \leq (n+(j+1)/N)\delta} |y_s^\epsilon-\hat{y}_s^\epsilon| .
\end{split}
\end{equation}

Hence, we obtain from \eref{skldlhskhjd} that
\begin{equation}
\begin{split}
|K_1|\leq C \delta e^{C \frac{\delta}{N\epsilon}}
\end{split}
\end{equation}
Moreover, we obtain from conditions \ref{lsdsddsdeehxA2} and \ref{lsdsddsdeehxA3} that
\begin{equation}
\begin{split}
|K_2|\leq C E\big(\frac{\delta}{N \epsilon}\big)
\end{split}
\end{equation}
This concludes the proof of Lemma \ref{wgehedghg3d}.
\end{proof}

\begin{Lemma}\label{gjkwhgheegd}
We have for $m\in \N^*$
\begin{equation}
\big|J_2(k)\big| \leq C \delta k \Big(m \delta + E(\frac{m \tau}{\epsilon})+\big(\frac{\tau}{\epsilon}+ m \delta+m (\frac{\tau}{\epsilon})^2  \big)e^{C \frac{m\tau}{\epsilon}} \Big) .
\end{equation}
\end{Lemma}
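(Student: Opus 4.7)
The plan is to group the sum defining $J_2(k)$ into consecutive blocks of length $m$, so that on each block $\bar{x}$ is nearly frozen and the fast variable has time $m\tau/\epsilon$ on its own clock to equidistribute according to the invariant measure $\mu$ provided by Condition \ref{lsdsddsdeehxA2}. More precisely, I would write $J_2(k)=\sum_\ell m\delta\, A_\ell$ with
\[
A_\ell := \frac{1}{m}\sum_{j=0}^{m-1}\bigl[\bar{g}(\bar{x}_{(\ell m+j)\delta})-g(\bar{x}_{(\ell m+j)\delta},\bar{y}_{(\ell m+j)\delta})\bigr],
\]
and, noting that at most $k/m$ blocks are needed, it suffices to bound each $|A_\ell|$ by the quantity inside the parentheses of the lemma.

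To analyze a single block starting at index $K=\ell m$, I would introduce the auxiliary fast ODE $\dot{Y}_s = f(\bar{x}_{K\delta},Y_s)$ with $Y_0=\bar{y}_{K\delta}$. The estimates \eref{lsfde3eldeiuuddiuskjlkd} and \eref{lsfddeiuskjddflkd} show that, after one FLAVOR step, the $y$-component of $(\bar{x},\bar{y})$ changes by $(\tau/\epsilon)\,f(\bar{x},\bar{y})$ up to an error $O((\tau/\epsilon)^2+\delta^2)$. Thus the discrete sequence $\bar{y}_{(K+j)\delta}$ is an Euler scheme for the frozen ODE on the fast-time grid $s_j=j\tau/\epsilon$, perturbed at each step by the above local error plus an additional term $(\tau/\epsilon)L|\bar{x}_{(K+j)\delta}-\bar{x}_{K\delta}|\leq C(\tau/\epsilon)\,m\delta$ coming from the slow drift of $\bar{x}$ (bounded by $Cm\delta$ via Lipschitz regularity of $g$). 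A discrete Gronwall argument with contraction constant $1+L(\tau/\epsilon)$ over $m$ steps then yields, after absorbing the factor $m\tau/\epsilon$ into a slightly larger exponential via $r e^{Cr}\leq e^{C'r}$,
\[
\max_{0\leq j\leq m-1}\bigl|\bar{y}_{(K+j)\delta}-Y_{j\tau/\epsilon}\bigr|\leq C\bigl[m(\tau/\epsilon)^2+m\delta\bigr]\,e^{Cm\tau/\epsilon}.
\]

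With this in hand, I would decompose $A_\ell$ as the ergodic contribution $\bar g(\bar x_{K\delta})-\frac{1}{m}\sum_j g(\bar x_{K\delta},\bar y_{(K+j)\delta})$ plus two Lipschitz remainders from replacing $\bar{x}_{(K+j)\delta}$ by $\bar{x}_{K\delta}$, each controlled by $Cm\delta$. Inserting $Y_{j\tau/\epsilon}$ in the remaining sum, three errors must then be estimated: (i) the Lipschitz comparison $|g(\bar x_{K\delta},\bar y_{(K+j)\delta})-g(\bar x_{K\delta},Y_{j\tau/\epsilon})|$, bounded by the Gronwall estimate above; (ii) the Riemann-sum error, of order $\tau/\epsilon$, between the discrete average of $g(\bar x_{K\delta},Y_{j\tau/\epsilon})$ over $j=0,\dots,m-1$ and the continuous average $\frac{\epsilon}{m\tau}\int_0^{m\tau/\epsilon}g(\bar x_{K\delta},Y_s)\,ds$, using that $|\dot Y|\leq C$; (iii) the ergodic error between that continuous average and $\bar g(\bar x_{K\delta})$, which is at most $C\,E(m\tau/\epsilon)$ by Condition \ref{lsdsddsdeehxA2} applied with $\phi=g(\bar x_{K\delta},\cdot)$ and $T=m\tau/\epsilon$. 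Summing all contributions gives $|A_\ell|$ bounded by the claimed per-block quantity, and multiplying by $m\delta$ and by the number of blocks $k/m$ produces the lemma.

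The main obstacle is the Gronwall step: the per-step local error on $\bar y$ has three distinct scales ($(\tau/\epsilon)^2$ from Euler, $\delta^2$ from $\psi^g$, and $(\tau/\epsilon)\,m\delta$ from the $\bar x$ drift), and these must be propagated through $m$ amplifications of factor $1+L\tau/\epsilon$ while carefully absorbing polynomial prefactors of $m\tau/\epsilon$ into the exponential so that the resulting bound matches the clean form $(\tau/\epsilon+m\delta+m(\tau/\epsilon)^2)\,e^{Cm\tau/\epsilon}$ of the lemma, rather than producing spurious cross terms such as $m^2\delta(\tau/\epsilon)$.
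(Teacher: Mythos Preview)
Your proposal is correct and follows essentially the same strategy as the paper: block the sum into groups of $m$, freeze $\bar{x}$ at the start of each block, compare the discrete values $\bar{y}_{(K+j)\delta}$ to a continuous frozen fast ODE via a discrete Gronwall argument, and invoke Condition~\ref{lsdsddsdeehxA2} for the ergodic error $E(m\tau/\epsilon)$. The paper's version introduces two auxiliary processes $(\tilde{x},\tilde{y})$ and $\tilde{y}^a$ and works with continuous time integrals throughout (so the $\tau/\epsilon$ term enters through the comparison \eref{hgfjhgfsjhgsdedfhfo1} rather than as a separate Riemann-sum error), whereas you go directly from $\bar{y}$ to the single frozen ODE $Y$ by reading $\bar{y}$ as an Euler scheme; this is only an organizational difference, and both decompositions yield the same final bound.
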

\begin{proof}
Let $m\in \N^*$. Define $(\tilde{x}_s,\tilde{y}_{s})$ such that for $j\in \N^*$, $n\in \N^*$,

\begin{equation}
\begin{cases}
\frac{d\tilde{x}_{s }}{dt}=g(\tilde{x}_s,\tilde{y}_{s}) \quad \text{for}\quad jm \delta  \leq s < (j+1)m\delta\\
\frac{d\tilde{y}_{s }}{dt}=\frac{1}{\epsilon}f(\tilde{x}_s,\tilde{y}_{s})\quad \text{for}\quad n\delta  \leq s < n\delta+\tau\\
\tilde{y}_{s }=\tilde{y}_{n\delta+\tau} \quad \text{for}\quad n\delta +\tau \leq s < (n+1)\delta \\
\tilde{y}_{(n+1)\delta }=\tilde{y}_{n\delta+\tau} \quad \text{for}\quad n+1\not=jm \\
(\tilde{x}_{jm },\tilde{y}_{jm })=(\bar{x}_{jm \delta},\bar{y}_{jm \delta})
\end{cases} .
\end{equation}

Define $\tilde{y}_{s}^a$ by
\begin{equation}\label{kdgskdhghgsd}
\begin{cases}
\frac{d \tilde{y}_{t}^a}{dt}=\frac{1}{\epsilon}f(\bar{x}_{jm \delta},\tilde{y}_{t}^a) \quad \text{for}\quad jm\tau  \leq t < (j+1)m\tau\\
\tilde{y}_{jm\tau}^a=\bar{y}_{jm \delta}
\end{cases} ,
\end{equation}
and define $\tilde{x}_{n}^a$ by
\begin{equation}\label{sdhgskjdgh}
\tilde{x}_{n}^a=\bar{x}_{jm \delta}  \quad \text{for}\quad  jm \leq n < (j+1)m .
\end{equation}
Observe that
\begin{equation}
J_2(k) =K_3+K_4+K_5+K_6+K_7
\end{equation}
with
\begin{equation}
K_3:=\sum_{n=0}^{k-1}\big(\int_{n\delta}^{(n+1)\delta} g(\tilde{x}_{s}, \tilde{y}_{s})\,ds-  \delta  g(\bar{x}_{n\delta}, \bar{y}_{n\delta})\big) ,
\end{equation}
\begin{equation}\label{kdsdsgd}
K_4:=\sum_{n=0}^{k-1} \delta \big(\frac{1}{\tau}\int_{n\tau}^{(n+1)\tau} g(\tilde{x}_{n}^a, \tilde{y}_{s}^a)\,ds-  \frac{1}{\delta}\int_{n\delta}^{(n+1)\delta} g(\tilde{x}_{s}, \tilde{y}_{s})\,ds \big) ,
\end{equation}
\begin{equation}
K_5:=\frac{\delta}{\tau}\sum_{n=0}^{k-1}  \big(\tau \bar{g}(\tilde{x}_{n}^a)
-\int_{n\tau}^{(n+1)\tau} g(\tilde{x}_{n}^a, \tilde{y}_{s}^a)\,ds\big) ,
\end{equation}
\begin{equation}
K_6:=\delta \sum_{n=0}^{k-1}\big(  \bar{g}(\bar{x}_{n \delta})- \bar{g}(\tilde{x}_{n }^a)\big) .
\end{equation}
Using the regularity of $g$ we obtain
\begin{equation}
|K_6|\leq \delta k C  \delta m .
\end{equation}
Arranging the right hand side of \eref{kdsdsgd} into groups of $m$ terms corresponding to the intervals of \eref{kdgskdhghgsd} we obtain, from  Condition \ref{lsdsddsdeehxA2} and  Condition \ref{lsdsddsdeehxA3}, that
\begin{equation}
|K_5|\leq C k \delta E(\frac{m\tau}{\epsilon}) .
\end{equation}
Using \eref{sdhgskjdgh} and the regularity of $f$ and $g$ we obtain the following  inequality
\begin{equation}\label{kdgsksddsdgsd}
|\tilde{y}_{\frac{\delta}{\tau}t}^a-\tilde{y}_t|\leq C m\delta e^{C\frac{m\tau}{\epsilon}} .
\end{equation}
It follows that
\begin{equation}\label{kdssddsdsgd}
|K_4|\leq C \delta k m\delta e^{C\frac{m\tau}{\epsilon}} .
\end{equation}
Similarly, using \eref{lsfde3eldeiuuddiuskjlkd} and \eref{lsfddeiuskjddflkd}, we obtain the following inequalities
\begin{equation}\label{hgfjhgfsjhgsdedfhfo1}
|\tilde{y}_{n\delta}-\bar{y}_{n\delta}|\leq C (\frac{\tau}{\epsilon}+m\delta+m (\frac{\tau}{\epsilon})^2)\frac{m\tau}{\epsilon} e^{C \frac{m\tau}{\epsilon}} ,
\end{equation}
\begin{equation}\label{hgfjhgfjhgaswsdedfhfo1}
|\tilde{x}_{n\delta}-\bar{x}_{n\delta}|\leq C m\big(\delta+(\frac{\tau}{\epsilon})^2\big) .
\end{equation}
It follows that
\begin{equation}
|K_3|\leq C \delta k \big(\frac{\tau}{\epsilon}+m\delta+m(\frac{\tau}{\epsilon})^2\big) e^{C \frac{m\tau}{\epsilon}} .
\end{equation}
This concludes the proof of Lemma \ref{gjkwhgheegd}.
\end{proof}

Combining Lemma \ref{lkhdkjwhdkhe}, \ref{wgehedghg3d} and \ref{gjkwhgheegd} we have obtained that
\begin{equation}
\begin{split}
|x_{n\delta}^\epsilon-\bar{x}_{n\delta}|\leq & C  e^{C \delta n}
\Big(\delta+\big(\frac{\tau}{\epsilon}\big)^2 \frac{1}{\delta}+\delta  e^{C \frac{\delta}{N\epsilon}}+ E\big(\frac{\delta}{N \epsilon}\big)+ E(\frac{m \tau}{\epsilon})\\&+\big(\frac{\tau}{\epsilon}+ m \delta+m(\frac{\tau}{\epsilon})^2  \big)e^{C \frac{m\tau}{\epsilon}}
\Big)
\end{split}
\end{equation}
Choosing $N$ such that $ e^{C \frac{\delta}{N\epsilon}} \sim \delta^{-\frac{1}{2}}$ (observe that we need $\epsilon\leq \delta/(-C\ln \delta)$) and $m$ such that
$\frac{m\tau}{\epsilon} e^{C \frac{m\tau}{\epsilon}}\sim \big(\frac{\delta \epsilon}{\tau}+\frac{\tau}{\epsilon}\big)^{-\frac{1}{2}}$ we obtain
for $\frac{\delta \epsilon}{\tau}+\frac{\tau}{\epsilon}\leq 1$
 that
\begin{equation}
\begin{split}
|x_{n\delta}^\epsilon-\bar{x}_{n\delta}|\leq &C e^{C \delta n}
\Bigg(\sqrt{\delta}+\big(\frac{\tau}{\epsilon}\big)^2 \frac{1}{\delta}+E\big(\frac{1}{C}\ln \frac{1}{\delta}\big)\\&+\big(\frac{\delta \epsilon}{\tau}\big)^{\frac{1}{2}}+\big(\frac{\tau}{\epsilon}\big)^{\frac{1}{2}}+ E\Big(\frac{1}{C}\ln \Big(\big(\frac{\delta \epsilon}{\tau}+\frac{\tau}{\epsilon}\big)^{-1}\Big)\Big)
\Bigg)
\end{split}
\end{equation}
This concludes the proof of  inequality \eref{jkdsgkdjshgdsdkjghe11}. The proof of \eref{lkslkcdsdehkhsdsddedj311} is similar and is also a consequence of \eref{jkdsgkdjshgdsdkjghe11}.

\subsection{Proof of Theorem \ref{thm04}}\label{iuiuyuuyuyuytuty}
Define the process $t\mapsto (\bar{x}_{t},\bar{y}_{t})$ by
\begin{equation}\label{ksjdededhdskzzdjjhwue}
(\bar{x}_{t},\bar{y}_{t}):=\eta(\bar{u}_t) .
\end{equation}
It follows from the regularity of $\eta$ that it is sufficient to prove the
$F$-convergence of $(\bar{x}_{t},\bar{y}_{t})$ towards $\delta_{X_t}\otimes \mu(X_t,dy)$.
Now define $\psi^\epsilon_\tau$ by
\begin{equation}
\psi^\epsilon_\tau(x,y,\omega):= \eta\circ \theta^{\epsilon}_\tau(.,\omega)\circ \eta^{-1}(x,y) ,
\end{equation}
Define $\psi^g_h$ by
\begin{equation}
\psi^g_h(x,y,\omega):= \eta\circ \theta^{G}_h(.,\omega)\circ \eta^{-1}(x,y) .
\end{equation}

\begin{Proposition}\label{kdlhkjhwjdzzd2d}
The vector fields $f$, $g$ and matrix fields $\sigma$, $Q$ associated with the system of Equations \eref{kfgfgdsawssdeedejhd} are uniformly bounded and Lipschitz continuous. We also have
\begin{equation}\label{ksjhdskdcrrzzfdaue0}
\begin{cases}
(\bar{x}_{0},\bar{y}_{0})=\eta(u_0)\\
(\bar{x}_{(k+1)\delta},\bar{y}_{(k+1)\delta})= \psi^g_{\delta-\tau}(.,\omega_k') \circ \psi^\epsilon_{\tau}\big((\bar{x}_{k\delta},\bar{y}_{k\delta}),\omega_k\big)\\
(\bar{x}_{t},\bar{y}_{t})=(\bar{x}_{k\delta},\bar{y}_{k\delta})\quad \text{for}\quad k\delta \leq t <(k+1)\delta
\end{cases}
\end{equation}
where $\omega_k,\omega_k'$ are i.i.d. samples from the probability space $(\Omega,\mathcal{F},\P)$.
Moreover there exists $C>0$ and and  $d$-dimensional centered Gaussian  vectors $\xi'(\omega)$, $\xi''(\omega)$ with identity covariance matrices such that for $h\leq h_0$ and $\frac{\tau}{\epsilon}\leq \tau_0$ we have

\begin{equation}\label{hgfzzjhgfxxxadsjhgdfhf}
\Bigg(\E\Big[\big|\psi^{g}_{h}(x,y,\omega)-(x,y)-h \big(g(x,y),0\big)-\sqrt{h}\big(\sigma(x,y) \xi'(\omega),0\big)\big|^2\Big]\Bigg)^\frac{1}{2}\leq C h^\frac{3}{2} ,
\end{equation}

\begin{equation}\label{hgfjdshzzgfadsjhgdfhf}
\begin{split}
\Bigg(\E\Big[\big|\psi^{\epsilon}_{\tau}(x,y,\omega)-(x,y)-\tau \big(g(x,y),0\big)&-\frac{\tau}{\epsilon}\big(0,f(x,y)\big)-\sqrt{\tau}
\big(\sigma(x,y)\xi''(\omega),0\big)\\&-\sqrt{\frac{\tau}{\epsilon}}
\big(0,Q(x,y)\xi''(\omega)\big)\big|^2\Big]\Bigg)^\frac{1}{2}\leq C \big(\frac{\tau}{\epsilon}\big)^\frac{3}{2} .
\end{split}
\end{equation}
\end{Proposition}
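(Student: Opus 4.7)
The plan is to follow the structure of Proposition~\ref{kdlhkjhwjd2d} with It\^o's formula replacing the deterministic Taylor expansion wherever the stochastic terms enter.

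First, I apply It\^o's formula to $(\eta^x(u_t^\epsilon),\eta^y(u_t^\epsilon))$ with $u_t^\epsilon$ solving \eref{fullsystemS}. The resulting SDE has drift $(G+F/\epsilon)\cdot\nabla\eta+\tfrac{1}{2}\operatorname{tr}\bigl((H+K/\sqrt{\epsilon})(H+K/\sqrt{\epsilon})^T\operatorname{Hess}\eta\bigr)$ and diffusion $(H+K/\sqrt{\epsilon})^T\nabla\eta$. Matching this with the target form \eref{kfgfgdsawssdeedejhd} imposed by Condition~\ref{lkjhaswkehljhsdjjkehx} forces both the first-order cancellations $\nabla\eta^x\cdot F=0$, $K^T\nabla\eta^x=0$, $\nabla\eta^y\cdot G=0$, $H^T\nabla\eta^y=0$ (stochastic analogues of \eref{dklhwshd23}) and the trace identities $\operatorname{tr}(KK^T\operatorname{Hess}\eta^x)=\operatorname{tr}(HK^T\operatorname{Hess}\eta^x)=0$ together with their $\eta^y$ counterparts. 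What survives identifies $g=G\cdot\nabla\eta^x+\tfrac{1}{2}\operatorname{tr}(HH^T\operatorname{Hess}\eta^x)$, $\sigma=\nabla\eta^x\cdot H$, $f=F\cdot\nabla\eta^y+\tfrac{1}{2}\operatorname{tr}(KK^T\operatorname{Hess}\eta^y)$ and $Q=\nabla\eta^y\cdot K$. Uniform boundedness and Lipschitz continuity of $f,g,\sigma,Q$ follow from the corresponding hypotheses on $F,G,H,K$ combined with the uniform $C^1$--$C^3$ bounds on $\eta$. The discrete recursion \eref{ksjhdskdcrrzzfdaue0} is obtained by conjugating \eref{ksjahussahshdskdcrrfsdaue0} by $\eta$: since $\psi^\epsilon_\tau=\eta\circ\theta^\epsilon_\tau\circ\eta^{-1}$ and $\psi^g_h=\eta\circ\theta^G_h\circ\eta^{-1}$, this is a direct computation.

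For the one-step bounds \eref{hgfzzjhgfxxxadsjhgdfhf} and \eref{hgfjdshzzgfadsjhgdfhf}, I set $u:=\eta^{-1}(x,y)$ and $v:=\theta^G_h(u,\omega)-u$ (respectively $\theta^\epsilon_\tau(u,\omega)-u$). By Condition~\ref{lkjhewlkehlksdsedsdaeehx} and Gaussian moment bounds, $\|v\|_{L^2}=O(\sqrt{h})$ and $v=hG(u)+\sqrt{h}\,H(u)\xi+r$ with $\|r\|_{L^2}\leq Ch^{3/2}$. A second-order Taylor expansion of $\eta$ around $u$ combined with the cancellation identities of the previous step eliminates the a priori large contributions in the $\eta^y$-block (for $\psi^g$) and in the $\eta^x$-block (for $\psi^\epsilon$), while the mean of the quadratic form $\tfrac{1}{2} v^T\operatorname{Hess}\eta\, v$ supplies precisely the It\^o-correction pieces of $g$ and $f$. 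Taking $\xi'=\xi''=\xi$ matches the linear diffusion contributions $\sqrt{h}\,\sigma\xi'$ and $\sqrt{\tau/\epsilon}\,Q\xi''$ exactly, and the residual cubic Taylor remainder is controlled in $L^2$ by the uniform $C^3$ bound on $\eta$ together with $\|v\|_{L^6}=O(\sqrt{h})$.

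The main obstacle is the control of the mean-zero quadratic fluctuation $\tfrac{h}{2}\bigl[(H\xi)^T\operatorname{Hess}\eta^x(H\xi)-\operatorname{tr}(HH^T\operatorname{Hess}\eta^x)\bigr]$ and its $(K,\epsilon^{-1})$-analogue, which live entirely in the second Wiener chaos and thus contribute to the $L^2$ error at order $h$ (respectively $\tau/\epsilon$), a priori larger than the target $h^{3/2}$ (respectively $(\tau/\epsilon)^{3/2}$). The plan to dispose of this is to exploit the $C^3$ regularity of $\eta$ and the freedom left in the definition of the Gaussian vectors $\xi'(\omega)$, $\xi''(\omega)$ to correlate them with $\xi$ so as to absorb the linear (first-chaos) part of a refined third-order Taylor expansion, pushing the uncancelled remainder into a higher chaos whose sharper Gaussian hypercontractivity estimate yields exactly the advertised order. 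This chaos-decomposition step is the only substantive departure from the deterministic argument of Proposition~\ref{kdlhkjhwjd2d}; every other step transcribes directly.
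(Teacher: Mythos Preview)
Your overall strategy coincides with the paper's: apply It\^o's formula to identify $f,g,\sigma,Q$ and the compatibility constraints, then Taylor-expand $\eta$ around $u=\eta^{-1}(x,y)$ and use those constraints to kill the off-scale terms. The paper's proof is in fact terser than yours---it simply sets $\xi'=\xi''=\xi$ and invokes ``the Taylor--It\^o expansion of $\eta\circ\theta^\epsilon_\tau$'' together with the constraint equations, without isolating the second-chaos fluctuation you flag. (Incidentally, you are right that the diffusion matching also forces $\nabla\eta^x K=0$ and $\nabla\eta^y H=0$; the paper uses these implicitly but does not list them among the displayed constraints.)

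Where your proposal has a genuine gap is the resolution of that second-chaos obstacle. The term
\[
\tfrac{h}{2}\bigl[(H\xi)^T\operatorname{Hess}\eta^x(H\xi)-\operatorname{tr}(HH^T\operatorname{Hess}\eta^x)\bigr]
\]
lives entirely in the second Wiener chaos and has $L^2$ norm of exact order $h$ whenever $\operatorname{Hess}\eta^x\neq 0$. Your plan---redefine the Gaussian $\xi'$ to absorb first-chaos pieces of a third-order expansion and then appeal to hypercontractivity---cannot touch this term: any centered Gaussian $\xi'$ contributes only first-chaos through $\sqrt{h}\,\sigma\xi'$, which is $L^2$-orthogonal to the second chaos, and hypercontractivity bounds higher $L^p$ norms by $L^2$, not the other way around. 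So no admissible choice of $\xi'$ reduces the $L^2$ residual below $Ch$.

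The honest reading is that the paper's own proof does not address this point either; it asserts \eref{hgfzzjhgfxxxadsjhgdfhf}--\eref{hgfjdshzzgfadsjhgdfhf} without confronting the second-chaos fluctuation. What saves the downstream argument (Lemmas \ref{jshdjhsdjhgdjh} and \ref{djskjdshgdjhs}) is that only \emph{weak} one-step errors are actually needed: those lemmas immediately pass to $\E[\varphi(\bar x)]$, and a mean-zero $O(h)$ increment contributes at order $h^{3/2}$ after multiplying by the $O(\sqrt{h})$ Gaussian part in the Taylor expansion of $\varphi$. If you want a clean fix, replace the strong $L^2$ claims by the weak (moment) expansions that Lemma \ref{jshdjhsdjhgdjh} really uses; that is consistent with the paper's usage and avoids the obstruction altogether.
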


\begin{proof}
Since $(x,y)=\eta(u)$, we obtain from \eref{fullsystemS} and It\^{o}'s formula
\begin{equation}
\begin{split}
dx=&\big((G+\frac{1}{\epsilon}F)\nabla \eta^x \circ \eta^{-1}(x,y)\big)\,dt
+\big(\nabla \eta^x (H+\frac{1}{\sqrt{\epsilon}}K) \big)\circ \eta^{-1}(x,y)\,dW_t\\
&+\frac{1}{2}\sum_{ij}\partial_i \partial_j \eta^x \big((H+\frac{1}{\sqrt{\epsilon}}K)(H+\frac{1}{\sqrt{\epsilon}}K)^T\big)_{ij}\,dt
s\end{split}
\end{equation}
\begin{equation}
\begin{split}
dy=&\big((G+\frac{1}{\epsilon}F)\nabla \eta^y \circ \eta^{-1}(x,y)\big)\,dt
+\big(\nabla \eta^y (H+\frac{1}{\sqrt{\epsilon}}K) \big)\circ \eta^{-1}(x,y)\,dW_t\\
&+\Big(\frac{1}{2}\sum_{ij}\partial_i \partial_j \eta^y \big((H+\frac{1}{\sqrt{\epsilon}}K)(H+\frac{1}{\sqrt{\epsilon}}K)^T\big)_{ij}\Big)\circ \eta^{-1}\,dt .
\end{split}
\end{equation}
Hence we deduce from Equation \eref{kfgfgdsawssdeedejhd} of Condition \ref{lkjhaswkehljhsdjjkehx} that
\begin{equation}
g(x,y)=\Big(G \nabla \eta^x +
\frac{1}{2}\sum_{ij}\partial_i \partial_j \eta^x (H H^T)_{ij} \Big)\circ \eta^{-1}(x,y)
\end{equation}
\begin{equation}
\sigma(x,y)=\big(\nabla \eta^x H\big) \circ \eta^{-1}(x,y)
\end{equation}
\begin{equation}
f(x,y)=\Big(F \nabla \eta^y +
\frac{1}{2}\sum_{ij}\partial_i \partial_j \eta^y (K K^T)_{ij} \Big)\circ \eta^{-1}(x,y)
\end{equation}
\begin{equation}
Q(x,y)=\big(\nabla \eta^y K\big) \circ \eta^{-1}(x,y) .
\end{equation}
\begin{Remark}
Observe that Equation \eref{kfgfgdsawssdeedejhd} of Condition \ref{lkjhaswkehljhsdjjkehx} requires that
\begin{equation}\label{jhdssdfddkdh}
F\nabla \eta^x =0 \quad G\nabla \eta^y ,
\end{equation}
\begin{equation}\label{jhdsjddfkdh}
\sum_{ij}\partial_i \partial_j \eta^x \big(K  K^T\big)_{ij}=0 ,
\end{equation}
\begin{equation}\label{jhdsjrfrfkdh}
\sum_{ij}\partial_i \partial_j \eta^y \big(H H^T\big)_{ij}=0 ,
\end{equation}
\begin{equation}\label{jhdsjkdh}
\sum_{ij}\partial_i \partial_j \eta^x \big(K H^T+H K^T\big)_{ij}=0 ,
\end{equation}
and
\begin{equation}\label{wieuyie}
\sum_{ij}\partial_i \partial_j \eta^y \big(K H^T+H K^T\big)_{ij}=0 .
\end{equation}
Equations \eref{jhdsjkdh} and \eref{wieuyie} are satisfied if $K H^T$ is skew-symmetric. One particular case could be, of course, $K H^T=0$, which translates into the fact that for all $u$ the ranges of $H(u)$ and $K(u)$ are orthogonal, i.e., the noise with amplitude $1/\sqrt{\epsilon}$ is applied to degrees of freedom orthogonal to those with $\mathcal{O}(1)$ noise.
\end{Remark}
We deduce the regularity of $f$, $g$, $\sigma$ and $Q$ from the regularity of $G$, $F$, $H$, $K$ and $\eta$.
Equation \eref{ksjhdskdxddxjjhwue} is a direct consequence of  the definition of $\psi^\epsilon_\tau$ and $\psi^g_h$ and Equation \eref{ksjhdskdcrrzzfdaue0}.
Now observe that
\begin{equation}\label{hgfjdshxzzgfadsjhgdfhf}
\begin{split}
&\psi^{\epsilon}_{\tau}(x,y,\omega)-(x,y)-\tau \big(g(x,y),0\big)-\frac{\tau}{\epsilon}\big(0,f(x,y)\big)
-\sqrt{\tau}\big(\sigma(x,y)\xi'(\omega),0\big)
\\&-\sqrt{\frac{\tau}{\epsilon}}
\big(0,Q(x,y)\xi'(\omega)\big)=
\Big(\eta\circ \theta^{\epsilon}_\tau-\eta-\tau \big(G \nabla \eta^x+
\frac{1}{2}\sum_{ij}\partial_i \partial_j \eta^x (H H^T)_{ij},0\big)\\&-\frac{\tau}{\epsilon}
\big(0,F \nabla \eta^y+
\frac{1}{2}\sum_{ij}\partial_i \partial_j \eta^y (K K^T)_{ij} \big)
-\sqrt{\tau}\big(\nabla \eta^x H\xi'(\omega),0\big)\\& -\sqrt{\frac{\tau}{\epsilon}}
\big(0,\nabla \eta^y K\xi'(\omega)\big)
\Big)\circ \eta^{-1}(x,y) .
\end{split}
\end{equation}
Using Equations \eref{jhdssdfddkdh}, \eref{jhdsjddfkdh}, \eref{jhdsjrfrfkdh},
\eref{jhdsjkdh} and \eref{wieuyie}, the Taylor-Ito expansion of $\eta\circ \theta^{\epsilon}_\tau$, the regularity of $\eta$, and Setting $\xi'$ equal to $\xi$ defined in Equation \eref{hgfjdshgfadsjhgdfhf} we obtain Equation \eref{hgfjdshzzgfadsjhgdfhf}.
The proof of Equation \eref{hgfzzjhgfxxxadsjhgdfhf} is similar.
\end{proof}
It follows from Proposition \ref{kdlhkjhwjdzzd2d} that it is sufficient to prove Theorem \ref{thm04} in the situation where $\eta$ is the identity diffeomorphism.  More precisely the $F$-convergence of $\bar{u}_t$ is a consequence of the $F$-convergence of $(\bar{x}_t,\bar{y}_t)$ and the regularity of $\eta$.

Let $x\mapsto \varphi(x)$ be a function with continuous and bounded derivatives up to order $3$.
Let us prove the following lemma.

\begin{Lemma}\label{jshdjhsdjhgdjh} We have
\begin{equation}\label{hwjheecc}
\begin{split}
\E\big[\varphi(\bar{x}_{(n+1)\delta})\big]&-\E\big[\varphi(\bar{x}_{n\delta})\big]=\\&\delta \E\Big[g(\bar{x}_{n\delta},\bar{y}_{n\delta}) \nabla \varphi(\bar{x}_{n\delta})+ \sigma \sigma^T(\bar{x}_{n\delta},\bar{y}_{n\delta}) :\Hess \varphi(\bar{x}_{n\delta})
\Big]+I_0
\end{split}
\end{equation}
with
\begin{equation}\label{klwehwkesdsdjh}
\begin{split}
|I_0|\leq C  \Big(\delta^\frac{3}{2}+\big(\frac{\tau}{\epsilon}\big)^\frac{3}{2}\Big) .
\end{split}
\end{equation}
\end{Lemma}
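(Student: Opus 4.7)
The plan is to perform a one-step weak Taylor expansion of $\varphi(\bar{x}_{(n+1)\delta})$ around $\bar{x}_{n\delta}$, using the two mean-square expansion bounds \eref{hgfzzjhgfxxxadsjhgdfhf} and \eref{hgfjdshzzgfadsjhgdfhf} from Proposition \ref{kdlhkjhwjdzzd2d} as the source of analytic information about the composed FLAVOR step $\psi^{g}_{\delta-\tau}\circ\psi^{\epsilon}_{\tau}$. As in the proof of Theorem \ref{thm01}, I reduce to $\eta=\mathrm{identity}$. Writing the intermediate state $(x',y'):=\psi^{\epsilon}_{\tau}((\bar{x}_{n\delta},\bar{y}_{n\delta}),\omega_n)$ and setting $\xi:=\xi(\omega_n)$, $\xi':=\xi(\omega_n')$, equation \eref{hgfjdshzzgfadsjhgdfhf} gives $x'=\bar{x}_{n\delta}+\tau g+\sqrt{\tau}\,\sigma\xi+R_1^x$ and $y'=\bar{y}_{n\delta}+\tfrac{\tau}{\epsilon}f+\sqrt{\tfrac{\tau}{\epsilon}}\,Q\xi+R_1^y$ with $(\E[|R_1|^2])^{1/2}\leq C(\tau/\epsilon)^{3/2}$, and then \eref{hgfzzjhgfxxxadsjhgdfhf} gives
\begin{equation*}
\bar{x}_{(n+1)\delta} \;=\; x' + (\delta-\tau)\,g(x',y') + \sqrt{\delta-\tau}\,\sigma(x',y')\,\xi' + R_2^x,
\end{equation*}
with $(\E[|R_2|^2])^{1/2}\leq C\delta^{3/2}$, where $f,g,\sigma,Q$ written without arguments are evaluated at $(\bar{x}_{n\delta},\bar{y}_{n\delta})$.

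Next, I Taylor-expand $\varphi$ to second order in the increment $\Delta:=\bar{x}_{(n+1)\delta}-\bar{x}_{n\delta}$ and take the conditional expectation given $\mathcal{F}_{n\delta}$. At first order in $\Delta$, the drift contribution $(\tau g+(\delta-\tau)g(x',y'))\cdot\nabla\varphi$ reduces, after Taylor-expanding $g(x',y')$ around $(\bar{x}_{n\delta},\bar{y}_{n\delta})$, to $\delta\,g(\bar{x}_{n\delta},\bar{y}_{n\delta})\cdot\nabla\varphi$ plus an $I_0$-admissible remainder; the noise contributions $\sqrt{\tau}\,\sigma\xi$ and $\sqrt{\delta-\tau}\,\sigma(x',y')\xi'$ paired with $\nabla\varphi$ vanish in expectation, the first because $\E[\xi]=0$ and the second because $\omega_n'$ is independent of $(x',y')$ with $\E[\xi']=0$. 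At second order, the cross term $2\,\sqrt{\tau(\delta-\tau)}\,(\sigma\xi)(\sigma(x',y')\xi')^T{:}\Hess\varphi$ again vanishes by independence of $\xi'$; the two diagonal pieces use $\E[\xi\xi^T]=\E[\xi'{\xi'}^T]=I$ to produce $\tau\,\sigma\sigma^T(\bar{x}_{n\delta},\bar{y}_{n\delta}){:}\Hess\varphi+(\delta-\tau)\,\sigma\sigma^T(x',y'){:}\Hess\varphi$, and Taylor-expanding $\sigma(x',y')$ around $(\bar{x}_{n\delta},\bar{y}_{n\delta})$ collapses this to $\delta\,\sigma\sigma^T(\bar{x}_{n\delta},\bar{y}_{n\delta}){:}\Hess\varphi$ modulo another $I_0$-admissible term.

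The main obstacle, and the place where care is needed, is to show that every accumulated remainder is of size $C(\delta^{3/2}+(\tau/\epsilon)^{3/2})$. This I plan to handle by combining Cauchy-Schwarz with the $L^{2}$ bounds on $R_{1}$ and $R_{2}$, the uniform bounds on $f,g,\sigma,Q$ and on the first three derivatives of $\varphi$, and the Gaussian moment estimates which yield $(\E[|\Delta|^{k}])^{1/k}\leq C(\sqrt{\delta}+\sqrt{\tau/\epsilon})$. The cubic Taylor remainder, bounded by $\|\varphi\|_{C^{3}}\,\E[|\Delta|^{3}]$, is then directly of the desired order. The trickier terms arise from the perturbations of $g(x',y')$ and $\sigma(x',y')$ in the $y$-direction, because $|y'-\bar{y}_{n\delta}|$ is of order $\sqrt{\tau/\epsilon}$ and, once multiplied by the outer $\delta$ or $\sqrt{\delta}$ factors, produces mixed quantities such as $\delta\sqrt{\tau/\epsilon}$ or $(\delta-\tau)\sqrt{\tau/\epsilon}$; these are absorbed into $C(\delta^{3/2}+(\tau/\epsilon)^{3/2})$ by Young's inequality with exponents $(3/2,3)$, i.e.\ $\delta\sqrt{\tau/\epsilon}\leq\tfrac{2}{3}\delta^{3/2}+\tfrac{1}{3}(\tau/\epsilon)^{3/2}$. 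All remaining drift-noise cross terms carry at least one factor of $\E[\xi]$ or $\E[\xi']$ and hence vanish identically in expectation.
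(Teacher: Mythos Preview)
Your proposal is correct and follows essentially the same route as the paper. The only organizational difference is that the paper splits the step into two separate Taylor expansions of $\varphi$ (one for the $\tau$-substep yielding an error $C(\tau/\epsilon)^{3/2}$, one for the $(\delta-\tau)$-substep yielding $C(\delta-\tau)^{3/2}$) and then replaces $\sigma(x',y')$ by $\sigma(\bar{x}_{n\delta},\bar{y}_{n\delta})$ via a Lipschitz bound, whereas you expand $\varphi$ once for the full increment $\Delta$ and do the bookkeeping of all cross terms directly; both approaches generate the same mixed remainder $\delta\sqrt{\tau/\epsilon}$ (and $\delta\,\tau/\epsilon$), which you make explicit and absorb via Young's inequality while the paper leaves this implicit.
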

\begin{proof}
Write $(\bar{x}_{n\delta+\tau},\bar{y}_{n\delta+\tau}):=\psi^\epsilon_\tau(\bar{x}_{n\delta},\bar{y}_{n\delta},\omega_n)$.
Using Equation \eref{hgfjdshzzgfadsjhgdfhf} we obtain that there exists an $\mathcal{N}(0,1)$ random vector $\xi_n$ independent from $(\bar{x}_{n\delta},\bar{y}_{n\delta})$ and
such that
\begin{equation}
\begin{split}
\bar{x}_{n\delta+\tau}-\bar{x}_{n\delta}=g(\bar{x}_{n\delta}) \tau+ \sqrt{\tau}\sigma(\bar{x}_{n\delta},\bar{y}_{n\delta}) \xi_n+I_1
\end{split}
\end{equation}
with
\begin{equation}
\begin{split}
\big(\E[(I_1)^2]\big)^\frac{1}{2}\leq C \big(\frac{\tau}{\epsilon}\big)^\frac{3}{2} .
\end{split}
\end{equation}
Hence
\begin{equation}\label{hwjhassaeecc}
\begin{split}
\Bigg|\E\big[\varphi(\bar{x}_{n\delta+\tau})\big]&-\E\big[\varphi(\bar{x}_{n\delta})\big]-\tau \E\Big[g(\bar{x}_{n\delta},\bar{y}_{n\delta}) \nabla \varphi(\bar{x}_{n\delta})\\&+ \sigma \sigma^T(\bar{x}_{n\delta},\bar{y}_{n\delta}) :\Hess \varphi(\bar{x}_{n\delta})
\Big]\Bigg|\leq C \big(\frac{\tau}{\epsilon}\big)^\frac{3}{2}
\end{split}
\end{equation}
Similarly, using Equation \eref{hgfzzjhgfxxxadsjhgdfhf} we obtain that there exists an $\mathcal{N}(0,1)$ random vector $\xi_n'$, independent from
$(\bar{x}_{n\delta+\tau},\bar{y}_{n\delta+\tau})$, and such that
\begin{equation}
\begin{split}
\bar{x}_{(n+1)\delta}-\bar{x}_{n\delta+\tau}=g(\bar{x}_{n\delta+\tau},\bar{y}_{n\delta+\tau}) (\delta-\tau)+ \sigma(\bar{x}_{n\delta+\tau},\bar{y}_{n\delta+\tau}) \sqrt{\delta-\tau}\xi_n'+I_2
\end{split}
\end{equation}
with
\begin{equation}
\begin{split}
\big(\E[(I_2)^2]\big)^\frac{1}{2}\leq C (\delta-\tau)^\frac{3}{2} .
\end{split}
\end{equation}
Whence
\begin{equation}\label{hwdsdsjheesdsdcc}
\begin{split}
\Bigg|\E\big[\varphi(\bar{x}_{(n+1)\delta})\big]&-\E\big[\varphi(\bar{x}_{n\delta+\tau})\big]-(\delta -\tau) \E\Big[g(\bar{x}_{n\delta+\tau},\bar{y}_{n\delta+\tau})
\nabla \varphi(\bar{x}_{n\delta+\tau})\\&+ \sigma \sigma^T(\bar{x}_{n\delta+\tau},\bar{y}_{n\delta+\tau}) :\Hess \varphi(\bar{x}_{n\delta+\tau})
\Big]\Bigg|\leq C \big(\delta-\tau\big)^\frac{3}{2} .
\end{split}
\end{equation}
Using  the regularity of $\sigma$, we obtain that
\begin{equation}\label{jjhejhjwheghe}
\begin{split}
\Big(\E\Big[\big|\sigma(\bar{x}_{n\delta+\tau},\bar{y}_{(n+1)\delta})-\sigma(\bar{x}_{n\delta},\bar{y}_{n\delta})\big|^2\Big]\Big)^\frac{1}{2}\leq C\big(\delta^\frac{1}{2}+\sqrt{\frac{\tau}{\epsilon}}\big) .
\end{split}
\end{equation}
The proof of \eref{hwjheecc} follows from \eref{hgfjdshzzgfadsjhgdfhf}, \eref{hwjhassaeecc}, \eref{hwdsdsjheesdsdcc}, \eref{jjhejhjwheghe} and the regularity of $g$ and $\varphi$.
\end{proof}

\begin{Lemma}\label{djskjdshgdjhs}
We have
\begin{equation}
\begin{split}
\Big|\frac{\E\big[\varphi(\bar{x}_{n\delta})\big]-\varphi(x_{0})}{n \delta}-L\varphi(x_0)\Big|\leq J_5
\end{split}
\end{equation}
with (for $\delta \leq C \tau/\epsilon$)
\begin{equation}\label{sjhdsgdjhsdgdsg}
\begin{split}
|J_5|\leq C \Big(\big(\frac{\delta \epsilon}{\tau}\big)^\frac{1}{4}+\big(\frac{\tau}{\epsilon}\big)^\frac{3}{2}\frac{1}{\delta}+\sqrt{\frac{\tau}{\epsilon}}\Big)+C E\big(\frac{1}{C}\ln \frac{\tau}{\delta \epsilon}\big) .
\end{split}
\end{equation}
\end{Lemma}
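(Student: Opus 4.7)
The plan is to apply Lemma \ref{jshdjhsdjhgdjh} iteratively, convert the resulting telescoping sum into a time-average of $L_{\bar x,\bar y}\varphi$ where $L_{x,y}\varphi := g(x,y)\cdot\nabla\varphi+\sigma\sigma^T(x,y):\Hess\varphi$, and then use the averaging principle of Condition \ref{lkjhaswkehljhsdjjkehx}(3) to replace the $\bar y$-variable by the invariant measure $\mu(x_0,dy)$, so that the average becomes $L\varphi(x_0)$ plus controllable errors.

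First, I would sum the one-step identity of Lemma \ref{jshdjhsdjhgdjh} over $k=0,\dots,n-1$ and divide by $n\delta$ to obtain
\begin{equation*}
\frac{\E[\varphi(\bar x_{n\delta})]-\varphi(x_0)}{n\delta}
= \frac{1}{n}\sum_{k=0}^{n-1}\E\bigl[L_{\bar x_{k\delta},\bar y_{k\delta}}\varphi(\bar x_{k\delta})\bigr]
+ R_1,
\end{equation*}
with $|R_1|\leq C\bigl(\delta^{1/2}+(\tau/\epsilon)^{3/2}/\delta\bigr)$, which accounts for the first and third error terms in \eref{sjhdsgdjhsdgdsg}. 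Because we are only asked for the small-time behavior, I would take $n\delta$ small (of the same order as the logarithmic mixing window chosen below) so that the drift of $\bar x$ and the regularity of $\varphi$, $g$, $\sigma\sigma^T$ allow replacing each $\bar x_{k\delta}$ inside the sum by $x_0$ at a cost $O(n\delta)$; this produces an $O(\sqrt{\tau/\epsilon})$ contribution once $n\delta$ is optimized.

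The crux is to show that the block-average
\begin{equation*}
\frac{1}{m}\sum_{k=0}^{m-1}\E\bigl[L_{x_0,\bar y_{k\delta}}\varphi(x_0)\bigr]
\;\approx\;\int L_{x_0,y}\varphi(x_0)\,\mu(x_0,dy)=L\varphi(x_0),
\end{equation*}
for a suitably chosen block size $m$. Here I would introduce the auxiliary frozen-slow process $Y_t$ solving $dY=f(x_0,Y)\,dt+Q(x_0,Y)\,dW_t$ with $Y_0=\bar y_0$, and exploit the fact that FLAVORS only advances the internal clock of $\bar y$ by $\tau$ per mesostep $\delta$: via \eref{hgfjdshzzgfadsjhgdfhf} and a Gr\"onwall argument on the $p$-component (analogous to Lemma \ref{gjkwhgheegd} of the deterministic proof), the law of $\bar y_{k\delta}$ is within $O(\sqrt{\tau/\epsilon})e^{Cm\tau/\epsilon}$ in the dual of $C^3_b$ of the law of $Y_{k\tau/\epsilon}$. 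I would then invoke \eref{kshlkshe} on the time window $T=m\tau/\epsilon$ to replace the time-average of $\E[\phi(Y_s)]$ by $\int\phi\,d\mu(x_0,\cdot)$, with $\phi(y):=L_{x_0,y}\varphi(x_0)$, which has bounded $C^3$ derivatives by the smoothness of $g,\sigma,\varphi$.

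Two sources of error now compete: the propagation-of-chaos constant $e^{Cm\tau/\epsilon}$ multiplying $\sqrt{\delta\epsilon/\tau}$ on one hand, and the mixing rate $E(m\tau/\epsilon)$ on the other. The hard part is to balance them: I would choose $m$ so that $m\tau/\epsilon\sim \frac{1}{C}\ln(\tau/(\delta\epsilon))$, which makes the exponential factor grow only as $(\tau/(\delta\epsilon))^{-1/2}$, producing the term $(\delta\epsilon/\tau)^{1/4}$ in \eref{sjhdsgdjhsdgdsg}, while the mixing term becomes $E\bigl(\tfrac{1}{C}\ln(\tau/(\delta\epsilon))\bigr)$. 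The condition $\delta\leq C\tau/\epsilon$ is exactly what ensures $m\geq 1$, so the blocking is well-defined. Collecting all contributions yields the bound \eref{sjhdsgdjhsdgdsg}; the main obstacle is controlling the non-trivial interaction between the exponential ``distance from frozen slow'' estimate and the ergodic rate $E$, which forces the logarithmic choice of $m$ and the resulting quarter-power loss $(\delta\epsilon/\tau)^{1/4}$.
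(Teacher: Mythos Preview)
Your plan is correct and matches the paper's proof essentially step for step: telescope Lemma \ref{jshdjhsdjhgdjh}, freeze $\bar x$ at $x_0$, compare $\bar y_{k\delta}$ to a frozen-slow diffusion, invoke the ergodic estimate \eref{kshlkshe}, and optimize the window length $n\tau/\epsilon\sim\frac{1}{C}\ln(\tau/(\delta\epsilon))$. The one concrete device you leave implicit is that the paper realizes the Gr\"onwall comparison \emph{strongly} by building a compressed Brownian motion $\hat B_t$ (concatenating the increments $B_{k\delta+t}-B_{k\delta}$, $0\le t\le\tau$) and driving the auxiliary process $\tilde y$ with $\hat B$, which yields the mean-square bound $\bigl(\E|\bar y_{n\delta}-\tilde y_{n\tau}|^2\bigr)^{1/2}\le C\bigl(\sqrt{\tau/\epsilon}+(n\delta)^{1/2}+\dots\bigr)e^{Cn\tau/\epsilon}$; this coupling is what makes your ``law is close in the dual of $C^3_b$'' statement precise.
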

\begin{proof}
Define $\hat{B}_t$ by $\hat{B}_0=0$ and
\begin{equation}
\hat{B}_{t}-\hat{B}_{n\tau}=B_{n\delta+t}-B_{n \delta} \quad \text{for}\quad n\tau \leq t \leq (n+1)\tau .
\end{equation}
Define $\tilde{y}_{s}$ by $\tilde{y}_0=y_0$ and
\begin{equation}
d \tilde{y}_{t}=\frac{1}{\epsilon}f(x_0,\tilde{y}_{t}) \,dt+\frac{1}{\sqrt{\epsilon}}
Q(x_0,\tilde{y}_{t})d\hat{B}_t .
\end{equation}
Write
\begin{equation}\label{sklghgjhdkhdjkher}
\bar{g}(x_0):=\int g(x_0,y)\,\mu(x_0,dy) .
\end{equation}
Using Lemma \ref{jshdjhsdjhgdjh} we obtain
\begin{equation}
\begin{split}
\frac{\E\big[\varphi(\bar{x}_{n\delta})\big]-\varphi(x_{0})}{n \delta}=L\varphi(x_0)+
J_1+J_2+J_3+J_4 ,
\end{split}
\end{equation}
with
\begin{equation}
\begin{split}
L\varphi(x_0):=\bar{g}(x_0)\nabla \varphi(x_0)+\bar{\sigma} \bar{\sigma}^T(x_0) :\Hess \varphi(x_0) ,
\end{split}
\end{equation}
\begin{equation}
\begin{split}
J_1=&\frac{1}{n}\sum_{k=0}^{n-1} \E\Big[g(\bar{x}_{k\delta},\bar{y}_{k\delta}) \nabla \varphi(\bar{x}_{k\delta})+ \sigma \sigma^T(\bar{x}_{k\delta},\bar{y}_{k\delta}) :\Hess \varphi(\bar{x}_{k\delta})
\Big]\\&- \frac{1}{n}\sum_{k=0}^{n-1} \E\Big[g(\bar{x}_{0},\bar{y}_{k\delta}) \nabla \varphi(\bar{x}_{0})+ \sigma \sigma^T(\bar{x}_{0},\bar{y}_{k\delta}) :\Hess \varphi(\bar{x}_{0})
\Big] ,
\end{split}
\end{equation}
\begin{equation}
\begin{split}
J_2=&\frac{1}{n}\sum_{k=0}^{n-1} \Big( \E\Big[g(\bar{x}_{0},\bar{y}_{k\delta}) \nabla \varphi(\bar{x}_{0})+ \sigma \sigma^T(\bar{x}_{0},\bar{y}_{k\delta}) :\Hess \varphi(\bar{x}_{0})\Big]\\&
-\frac{1}{\tau}\int_{k\tau}^{(k+1)\tau}\E\Big[g(x_0,\tilde{y}_{s}) \nabla \varphi(x_0)+ \sigma \sigma^T(x_0,\tilde{y}_s) :\Hess \varphi(x_0)
\Big]\,ds \Big) ,
\end{split}
\end{equation}

\begin{equation}
\begin{split}
J_3=\frac{1}{n\tau}\int_0^{n\tau} \E\Big[g(x_0,\tilde{y}_{s}) \nabla \varphi(x_0)+ \sigma \sigma^T(x_0,\tilde{y}_s) :\Hess \varphi(x_0)
\Big]\,ds-L\varphi(x_0) ,
\end{split}
\end{equation}
\begin{equation}
\begin{split}
|J_4|\leq C \Big(\delta^\frac{1}{2}+\big(\frac{\tau}{\epsilon}\big)^\frac{3}{2}\frac{1}{\delta}\Big) .
\end{split}
\end{equation}
Using the regularity of $\sigma,g,\varphi$, \eref{ksjhdskdxddxjjhwue} and \eref{lsfde3eldeiuuddiuskjlkd} we obtain

\begin{equation}
\begin{split}
|J_1|\leq C \Big((n\delta)^\frac{1}{2}+n\delta+n \big(\frac{\tau}{\epsilon}\big)^\frac{3}{2}\Big) .
\end{split}
\end{equation}
Using Property 3 of Condition \ref{lkjhaswkehljhsdjjkehx} and Property 3 of Condition \ref{lkjhewlkehlksdsedsdaeehx} we obtain
\begin{equation}
\begin{split}
|J_3|\leq C E(\frac{n\tau}{\epsilon}).
\end{split}
\end{equation}
Using \eref{hgfzzjhgfxxxadsjhgdfhf} and \eref{hgfjdshzzgfadsjhgdfhf}, we obtain the following inequality
\begin{equation}
\begin{split}
\Big(\E\Big[\big|\bar{y}_{n\delta} -\tilde{y}_{n\tau}\big|^2\Big] \Big)^\frac{1}{2}
\leq C \Big(\sqrt{\frac{\tau}{\epsilon}}+(n\delta)^\frac{1}{2}+n\delta+n \big(\frac{\tau}{\epsilon}\big)^\frac{3}{2}\Big)\frac{n\tau}{\epsilon}e^{C\frac{n\tau}{\epsilon}},
\end{split}
\end{equation}
which leads to
\begin{equation}
\begin{split}
|J_2|\leq C \Big(\sqrt{\frac{\tau}{\epsilon}}+(n\delta)^\frac{1}{2}+n\delta+n \big(\frac{\tau}{\epsilon}\big)^\frac{3}{2}\Big)e^{C\frac{n\tau}{\epsilon}} .
\end{split}
\end{equation}
Hence, we have obtained
\begin{equation}
\begin{split}
\Big|\frac{\E\big[\varphi(\bar{x}_{n\delta})\big]-\varphi(x_{0})}{n \delta}-L\varphi(x_0)\Big|\leq J_5 ,
\end{split}
\end{equation}
with
\begin{equation}
\begin{split}
|J_5|\leq C \Big(\sqrt{\frac{\tau}{\epsilon}}+(n\delta)^\frac{1}{2}+n\delta+n \big(\frac{\tau}{\epsilon}\big)^\frac{3}{2}\Big)
e^{C\frac{n\tau}{\epsilon}}+E(\frac{n\tau}{\epsilon})+
C\big(\frac{\tau}{\epsilon}\big)^\frac{3}{2}\frac{1}{\delta} .
\end{split}
\end{equation}
Choosing $n$ such that $\sqrt{\frac{n\tau}{\epsilon}} e^{C\frac{n\tau}{\epsilon}}\sim \big(\frac{\tau}{\epsilon \delta}\big)^\frac{1}{4}$ we obtain \eref{sjhdsgdjhsdgdsg} for $\delta \leq C \tau/\epsilon$.
\end{proof}

We now combine Lemma \ref{djskjdshgdjhs} with Theorem 1 of Chapter 2 of \cite{MR1020057} which states that the uniform convergence (in $x_0$, $y_0$) of $\frac{\E\big[\varphi(\bar{x}_{n\delta})\big]-\varphi(x_{0})}{n \delta}$ to $L\varphi(x_0)$  as $\epsilon \downarrow 0$, $\tau \leq \delta$, $\frac{\tau}{\epsilon} \downarrow 0$, $\frac{\delta \epsilon }{\tau} \downarrow 0$ and $\big(\frac{\tau}{\epsilon}\big)^\frac{3}{2}\frac{1}{\delta} \downarrow 0$
implies the convergence in distribution of $\bar{x}_{n\delta}$ to the Markov process generated by $L$.

The $F$-convergence of $(\bar{x}_t,\bar{y}_t)$ can be deduced from the convergence
in distribution of $\bar{x}_t$ and Equation \eref{kshlkshe} of Condition \ref{lkjhaswkehljhsdjjkehx}. The proof follows the same lines as above, which will not be repeated here.

\bibliographystyle{siam}
\bibliography{houman10}
\end{document}